\documentclass{article}
\usepackage[margin=1truein]{geometry}
\usepackage[hyphens]{url}
\usepackage{graphicx}
\usepackage{orcidlink}
\hypersetup{
    colorlinks=true,
    linkcolor=blue,
    citecolor=blue,
    urlcolor=blue
}
\usepackage{natbib}
\usepackage{caption}
\usepackage{algorithm}
\usepackage{algorithmic}

\usepackage{newfloat}
\usepackage{listings}
\DeclareCaptionStyle{ruled}{labelfont=normalfont,labelsep=colon,strut=off}
\floatstyle{ruled}
\newfloat{listing}{tb}{lst}{}
\floatname{listing}{Listing}

\usepackage{booktabs}

\usepackage{amsmath}
\usepackage{amssymb}
\usepackage{bm}
\usepackage{mathrsfs}
\usepackage{mathtools}
\usepackage{optidef}
\usepackage{physics}
\allowdisplaybreaks[4]

\newcommand{\relmiddle}[1]{\mathrel{}\middle#1\mathrel{}}

\usepackage{amsthm}
\usepackage{thm-restate}
\theoremstyle{plain}
\newtheorem{theorem}{Theorem}
\newtheorem{lemma}{Lemma}
\newtheorem{proposition}{Proposition}

\theoremstyle{definition}
\newtheorem{condition}{Condition}

\newtheorem{assumption}{Assumption}

\usepackage{afterpage}
\usepackage{multirow}
\usepackage{makecell}
\usepackage{subcaption}
\usepackage{tikz}
\usepackage{array}
\usepackage{tabularx}
\newcolumntype{C}[1]{>{\centering\arraybackslash}m{#1}}
\definecolor{cA}{HTML}{0072BD}
\definecolor{cB}{HTML}{EDB120}
\definecolor{cC}{HTML}{77AC30}
\definecolor{cD}{HTML}{D95319}

\usepackage{appendix}
\usepackage{enumitem}
\newlist{assumptionitems}{enumerate}{1}
\setlist[assumptionitems]{
    label=\arabic*,
    ref=\theassumption.\arabic*
}
\usepackage{pifont}
\usepackage{xparse}
\usepackage{siunitx}

\usepackage{cleveref}
\crefname{section}{Section}{Sections}
\crefname{theorem}{Theorem}{Theorems}
\crefname{proposition}{Proposition}{Propositions}
\crefname{lemma}{Lemma}{Lemmas}
\crefname{condition}{Condition}{Conditions}
\crefname{assumption}{Assumption}{Assumptions}
\crefname{assumptionitemsi}{Assumption}{Assumptions}
\Crefname{assumptionitemsi}{Assumption}{Assumptions}
\crefname{algorithm}{Algorithm}{Algorithms}
\crefname{table}{Table}{Tables}
\crefname{figure}{Figure}{Figures}
\newcommand{\refLine}[1]{Line~\ref{#1}}

\title{
    Adaptive Gradient-Based Methods for a Broader Class of\\
    Optimization Problems under Performative Prediction
}
\author{
    Hiroki Hamaguchi\,\orcidlink{0009-0005-7348-1356}\textsuperscript{\rm 1},
    Yuya Hikima\textsuperscript{\rm 2},
    Hiroshi Sawada\textsuperscript{\rm 3},
    Akiko Takeda\textsuperscript{\rm 1, \rm 4}
    \\[0.5em]
    \textsuperscript{\rm 1}Graduate School of Information Science and Technology, The University of Tokyo\\
    \textsuperscript{\rm 2}Department of Industrial Engineering and Economics, Institute of Science Tokyo\\
    \textsuperscript{\rm 3}Communication Science Laboratories, NTT, Inc.\\
    \textsuperscript{\rm 4}Center for Advanced Intelligence Project, RIKEN\\
}

\DeclareMathOperator{\Pred}{Pred}
\DeclareMathOperator{\Proj}{Proj}
\DeclareMathOperator*{\argmin}{arg\,min}
\newcommand{\Lsm}{L_{\mathcal{L}}^{\mathrm{sm}}}

\begin{document}

\maketitle

\begin{abstract}
    We study optimization under performative prediction, where deploying a model affects the future data distribution.
    For this setting, several gradient-based approaches have been proposed.
    However, they typically assume specific data distributions or loss functions,  which limit their practical applicability.
    To overcome these limitations, we propose a gradient-based optimization method with convergence guarantees under substantially weaker assumptions.
    Our method explicitly estimates the induced distribution shift through finite differences.
    It enables higher-dimensional optimization across broader classes of loss functions and data distributions.
    We also propose a practical variant that reduces the number of samples required.
    Numerical experiments demonstrate that our proposed algorithms converge faster and more consistently than existing ones.
\end{abstract}

\begin{center}
    \noindent\textbf{Code repository}: \url{https://github.com/HirokiHamaguchi/adaptive-gradient-methods-under-performative-prediction}
\end{center}

\section{Introduction}
\label{sec:intro}

In standard reinforcement learning and machine learning, the objective is often to minimize the expectation of a given loss function $\ell(z; \theta)$, where $\theta \in \Theta$ is the model parameter and the data point $z$ is drawn from an unknown data distribution $\mathcal{D}$.
Empirical risk minimization draws samples from $\mathcal{D}$ at training time, minimizes the empirical loss, and then deploys the resulting model for the test phase \citep{vapnikPrinciplesRiskMinimization1991}.
This approach assumes that both training and test data are drawn from the same fixed distribution independent of $\theta$.

More practical settings may involve \emph{distribution shifts} \citep{quinonero2008dataset} between training and test.
The data distribution observed during training, $\mathcal{D}_{\mathrm{train}}$, may differ from the one observed when the model is deployed, $\mathcal{D}_{\mathrm{test}}$.
Such discrepancies have been attributed to exogenous factors such as temporal evolution or environmental changes.

Recently, the framework of \emph{performative prediction} \citep{perdomoPerformativePrediction2020} has attracted significant attention as a distinct form of distribution shift, namely an endogenous shift induced by model deployment.
This framework models the feedback loop through which the deployed model parameter $\theta$ changes the data distribution $\mathcal{D}_{\beta(\theta)}$:
\begin{equation}\label{prob:performative_optimization}
    \underset{\theta \in \Theta}{\mathrm{minimize}} \quad
    \mathcal{L}(\theta) \coloneqq \mathbb{E}_{z \sim \mathcal{D}_{\beta(\theta)}}[\ell(z;\theta)],
\end{equation}
where $\beta(\theta)$ is the unknown distribution parameter induced by $\theta$, and $\mathcal{L}$ is the performative loss (risk).

Such endogenous distribution shifts appear in many applications.
Examples include pricing decisions that affect the distribution of market demand, spam-filter parameters that alter the distribution of attackers' behavior, and recommender-system parameters that shape the distribution of user preferences \citep{perdomoPerformativePrediction2020,izzoHowLearnWhen2021d,hardtPerformativePredictionFuture2025b}.
To understand the setting, consider a pricing problem.
Let $\theta_{\mathrm{profit}}$ denote the vector of profit margins, and let $z_{\mathrm{sales}}$ denote the vector of daily sales counts.
A seller chooses the prices $\theta_{\mathrm{profit}}$ to maximize expected daily profit, or equivalently, minimize the loss $\ell(z_{\mathrm{sales}};\theta_{\mathrm{profit}}) = -z_{\mathrm{sales}}^\top \theta_{\mathrm{profit}}$.
This results in
\begin{equation*}
    \underset{\theta_{\mathrm{profit}} \in \Theta}{\mathrm{minimize}} \quad
    \mathbb{E}_{z_{\mathrm{sales}} \sim \mathcal{D}_{\beta(\theta_{\mathrm{profit}})}}
    \left[
        - z_{\mathrm{sales}}^\top \theta_{\mathrm{profit}}
        \right],
\end{equation*}
which is an instance of Problem~\eqref{prob:performative_optimization}.

To solve Problem~\eqref{prob:performative_optimization}, it is important to accurately estimate the induced distribution shift, i.e., the change in $\mathcal{D}_{\beta(\theta)}$ as a function of $\theta$.
If this shift is ignored, optimization can update the decision variable in a direction that is desirable under the current distribution but harmful after deployment.
In the pricing example, this corresponds to raising prices while ignoring the induced decrease in sales volume.

\paragraph{Research Question}

\begin{table*}[t]
    \centering
    \caption{
        Comparison of methods.
        In the Convergence Guarantee column, \ding{51} indicates convergence under restrictive assumptions such as Gaussian data distributions and non-degeneracy of successive parameter changes, and \ding{51}\ding{51} indicates convergence under weaker assumptions.
        The Sample Complexity column reports sample complexities for finding an $\epsilon$-stationary point under an $n$-dimensional Gaussian distribution with fixed variance and a polynomial loss.
        PerfGD has better sample complexity than the other methods, but it requires stronger assumptions and may fail to converge under weaker assumptions.
        Our main complexity result is based on coordinate-wise finite differences (\cref{thm:main}).
        Complexities marked with $\dagger$ are based on sphere smoothing (\citep[Theorem~10]{hikimaZerothorderGradientEstimators2025} and \cref{prop:sphere_smoothing}).
    }
    \label{tab:method_comparison}
    {\scriptsize
        \setlength{\tabcolsep}{0pt} 
        \renewcommand{\arraystretch}{0.95} 
        \begin{tabular}{
                C{0.17\linewidth}
                C{0.15\linewidth}
                C{0.12\linewidth}
                C{0.12\linewidth}
                C{0.12\linewidth}
                C{0.12\linewidth}
                C{0.20\linewidth}
            }
            \toprule
            Algorithm
             & {\centering Data Distribution                                                \\ Estimation}
             & {\centering Adaptive                                                         \\ Data Sampling}
             & {\centering Exploits                                                         \\ Loss Function}
             & {\centering Non-Gaussian                                                     \\ Distributions}
             & {\centering Convergence                                                      \\ Guarantee}
             & Sample Complexity                                                            \\
            \midrule
            {\centering \textbf{RRM} / \textbf{RGD}                                         \\[-0.4em] \scalebox{0.8}{\citep{perdomoPerformativePrediction2020}}}
             & ignore
             & \ding{51}
             & \ding{51}
             & \ding{51}
             & --
             & converge to a stable point                                                   \\
            {\centering \textbf{Plug-in}                                                    \\[-0.4em] \scalebox{0.8}{\citep{linPluginPerformativeOptimization2024b}}}
             & offline
             & --
             & \ding{51}
             & \ding{51}
             & --
             & implementation dependent                                                     \\
            {\centering \textbf{DFO}                                                        \\[-0.4em] \scalebox{0.8}{\citep{hikimaZerothorderGradientEstimators2025}}}
             & --
             & \ding{51}
             & --
             & \ding{51}
             & \ding{51}\ding{51}
             & $\order{n^{2.5}\epsilon^{-5}}$ $\big(\order{n^2 \epsilon^{-5}}^\dagger\big)$ \\
            {\centering \textbf{PerfGD}                                                     \\[-0.4em] \scalebox{0.8}{\citep{izzoHowLearnWhen2021d}}}
             & history-based
             & \ding{51}
             & \ding{51}
             & --
             & \ding{51}
             & $\order{n^{1.5} \epsilon^{-4}}$ or fail                                      \\
            \textbf{Proposed}
             & finite difference
             & \ding{51}
             & \ding{51}
             & \ding{51}
             & \ding{51}\ding{51}
             & $\order{n^{2.5}\epsilon^{-5}}$ $\big(\order{n^2 \epsilon^{-5}}^\dagger\big)$ \\
            \bottomrule
        \end{tabular}
    }
\end{table*}

Among several existing methods, the gradient-based algorithm PerfGD \citep{izzoHowLearnWhen2021d} is particularly promising for performative optimization.
PerfGD uses historical trajectories to approximate how the data distribution changes after model updates.
However, the theoretical guarantees for PerfGD mainly cover a one-dimensional Gaussian location family with fixed variance, require lower bounds on the gradient norm, and assume non-degeneracy of successive parameter changes.
Guarantees under weaker assumptions remain open \citep[Section 4.1]{izzoHowLearnWhen2021d}.
Thus, the following research question naturally arises:

\vskip\baselineskip
\emph{Can we develop a gradient-based method for performative optimization with convergence guarantees beyond restrictive assumptions on data distributions and losses?}
\vskip\baselineskip

\paragraph{Our Contributions}

In this paper, we develop a gradient-based algorithm that adaptively estimates the distribution parameter shifts induced by deployed models and incorporates these estimates into gradient-based updates.
Our contributions are as follows.
\begin{enumerate}[label=(\arabic*)]
    \item We extend gradient-based performative optimization to broader classes of data distributions and loss functions.
          The scope includes smooth losses with at most polynomial growth, such as polynomial, logistic, and softmax cross-entropy losses, including degenerate losses.
          It also includes high-dimensional Gaussian, Bernoulli, Poisson, log-normal, and regular exponential-family distributions under the conditions stated in \cref{asm:main}.
    \item We establish theoretical guarantees under these general conditions.
          In particular, we prove the $\Lsm$-smoothness of the performative loss $\mathcal{L}$, derive convergence guarantees, and obtain a sample complexity that matches the best existing result under these weak assumptions.
    \item We demonstrate stable empirical performance across a range of performative scenarios through experiments.
          Our method and its practical heuristic variant outperform existing approaches, providing empirical evidence for the benefits of adaptive gradient-based estimation.
\end{enumerate}
These contributions extend the applicability of gradient-based methods beyond previous work \citep{izzoHowLearnWhen2021d}, answering the research question posed above.

\paragraph{Related Work}

Let us briefly review the existing approaches to performative optimization with \cref{tab:method_comparison}.
Repeated Risk Minimization (RRM) and Repeated Gradient Descent (RGD) are simple retraining-based methods that ignore the distribution shift.
Under suitable conditions, they converge to performatively stable points ($\theta_{\mathrm{PS}} \in \argmin_{\theta \in \Theta} \mathbb{E}_{z \sim \mathcal{D}_{\beta(\theta_{\mathrm{PS}})}}[\ell(z;\theta)]$), which are invariant under retraining \citep{perdomoPerformativePrediction2020}.
Plug-in methods first estimate a parametric model of the dependence $\theta \mapsto \mathcal{D}_{\beta(\theta)}$ and then optimize the performative risk induced by the estimated model \citep{linPluginPerformativeOptimization2024b,millerOutsideEchoChamber2021}.
DFO methods, also known as zeroth-order methods, treat the performative risk as a black-box objective, requiring less modeling structure \citep{flaxmanOnlineConvexOptimization2004,hikimaGuidedZerothOrderMethods2025,hikimaZerothorderGradientEstimators2025,hikimaZerothOrderMethodsNonconvex2025}.

Still, RRM/RGD may not yield the optimal solution, plug-in methods are vulnerable to misspecification of the data distribution, and DFO methods do not exploit the known loss function and distribution-family structure in Problem~\eqref{prob:performative_optimization}.

We note that gradient-based methods, including our proposed method, can overcome these limitations.
As summarized in \cref{tab:method_comparison}, gradient-based methods can naturally exploit the known structure of the loss function and distribution family while adaptively estimating the unknown distribution parameter mapping.
As in \cref{tab:method_comparison}, we focus only on sample complexity because the cost of environment interaction typically dominates computational cost in practice.
Taken together, these observations highlight the advantages of our approach over existing methods.

\paragraph{Notation}

In this paper, $\mathbb{N}$ denotes the positive integers, $\mathbb{R}$ denotes the real numbers, $\norm{\cdot}$ denotes the Euclidean norm for vectors and induced norm for matrices, and $C^{k}$ denotes the set of $k$-times continuously differentiable functions.

\section{Preliminaries}

We introduce the setting.
Let $\Theta \subseteq \mathbb{R}^n$ be a nonempty closed convex feasible set for the model parameter, and $\theta \in \Theta$ be an $n$-dimensional model parameter.
Let $\beta\colon \mathbb{R}^n \to \mathbb{R}^m$ be a differentiable distribution parameter mapping from a model parameter $\theta$ to an $m$-dimensional distribution parameter $\beta(\theta)$.
In performative prediction, the $d$-dimensional data distribution $\mathcal{D}_{\beta(\theta)}$ depends on $\theta$ through $\beta$.
Our goal is to optimize the differentiable loss function $\ell \colon \mathbb{R}^d \times \mathbb{R}^n \to \mathbb{R}$ under the distribution induced by the deployed model.

Next, we introduce additional notation.
Let $B \subseteq \mathbb{R}^m$ denote the distribution parameter space for $\beta(\theta)$.
For $\beta_0 \in B$ and a measure $\mu$ on the data space,%
\footnote{%
    It is the Lebesgue measure $\dd{\mu(z)}=\dd{z}$ for continuous distributions, and the counting measure for discrete distributions.
}
let $p(z;\beta_0)$ denote the probability density function or probability mass function of $z \sim \mathcal{D}_{\beta_0}$, which satisfies $\int p(z;\beta_0) \dd{\mu(z)} = 1$.
See also Appendix~\ref{app:exact_formulation_of_assumptions}.
As we will see later in \cref{item:regularity}, we also assume that we can exchange integration and differentiation.

\paragraph{Unknown Distribution Parameter Mapping}

A key aspect of our setting is that the optimizer knows the loss function and the distribution family, but does not know the distribution parameter mapping $\beta(\theta)$.
In the pricing example, the optimizer knows the loss $\ell$ and models the sales counts as Poisson random variables, so $\mathcal{D}_{\beta(\theta_{\mathrm{profit}})}$ is the corresponding multivariate Poisson distribution.
Still, since $\beta(\theta_{\mathrm{profit}})$ is the vector of expected sales rates induced by the chosen profit margins $\theta_{\mathrm{profit}}$, the optimizer does not know the exact mapping form.
This information asymmetry is typical in performative prediction, motivating us to leverage the known structure of the loss and distribution while adaptively estimating the unknown distribution parameter mapping.

As an estimator of the distribution parameter $\beta(\theta)$, we assume the existence of $\Pred(\{z_j\}_{j=1}^{b})$ that takes a dataset of i.i.d.\ samples drawn from $\mathcal{D}_{\beta(\theta)}$ and outputs $\hat\beta \in B$ approximating $\beta(\theta)$.
In practice, $\beta(\theta)$ consists of quantities such as expectations, variances, or other moments of the distribution.
We can use the corresponding sample statistics as $\Pred$.

\paragraph{Structure of Gradient}

To optimize the performative loss $\mathcal{L}(\theta)$, we need to compute its gradient.
We view the performative loss $\mathcal{L}(\theta)$ as a special case of a more general two-argument function $\tilde{\mathcal{L}}(\theta_1,\theta_2)$, defined as follows:
\begin{equation*}
    \tilde{\mathcal{L}}(\theta_1,\theta_2) \coloneqq \mathbb{E}_{z \sim \mathcal{D}_{\beta(\theta_2)}}[\ell(z;\theta_1)]
                                           = \int \ell(z; \theta_1) p(z;\beta(\theta_2)) \dd{\mu(z)}.
\end{equation*}
Since $\mathcal{L}(\theta)=\tilde{\mathcal{L}}(\theta,\theta)$, we can compute the gradient of the performative loss \citep{izzoHowLearnWhen2021d} as
\begin{equation}
    \nabla \mathcal{L}(\theta) = \nabla_1\mathcal{L}(\theta) + \nabla_2\mathcal{L}(\theta) \label{eq:nabla_L}
\end{equation}
where
\begin{align}
    \!\nabla_1\mathcal{L}(\theta) & \coloneqq \nabla_{\theta_1} \tilde{\mathcal{L}}(\theta_1,\theta_2) \eval_{\theta_1=\theta_2=\theta}
                                  &&= \int \nabla_\theta \ell(z; \theta) p(z;\beta(\theta)) \dd{\mu(z)}
                                  = \mathbb{E}_{z \sim \mathcal{D}_{\beta(\theta)}} \qty[\nabla_\theta \ell(z; \theta)],      \label{eq:nabla_L1} \\
    \!\nabla_2\mathcal{L}(\theta) & \coloneqq \nabla_{\theta_2} \tilde{\mathcal{L}}(\theta_1,\theta_2) \eval_{\theta_1=\theta_2=\theta}
                                  &&= \int \ell(z; \theta) \nabla_\theta p(z;\beta(\theta)) \dd{\mu(z)}
                                  = {\pdv{\beta}{\theta}}(\theta)^\top \int \ell(z; \theta) \nabla_\beta p(z;\beta(\theta)) \dd{\mu(z)} \notag \\
                                  &&& = {\pdv{\beta}{\theta}}(\theta)^\top \mathbb{E}_{z \sim \mathcal{D}_{\beta(\theta)}} \qty[\ell(z; \theta) \nabla_\beta \ln p(z;\beta(\theta))]. \label{eq:nabla_L2}
\end{align}
In deriving \cref{eq:nabla_L2}, we used the so-called log-derivative trick, $\nabla_\beta p(z;\beta(\theta)) = p(z;\beta(\theta)) \nabla_\beta \ln p(z;\beta(\theta))$.
In \cref{eq:nabla_L}, the gradient is decomposed into two terms.
The first term $\nabla_1 \mathcal{L}(\theta)$ is the standard risk gradient, and the second term $\nabla_2 \mathcal{L}(\theta)$ is the indirect effect through the distribution shift.

\section{Proposed Algorithm}

We propose a projected stochastic gradient method for the performative prediction problem, described in \cref{alg:performative_prediction_1st_order}.
At iteration $t$ with the current model parameters $\theta_t$, we aim to update parameters along $-\nabla \mathcal{L}(\theta_t)$.
Using the decomposition in \cref{eq:nabla_L}, we estimate its two components from the corresponding samples.
Then, we update $\theta_t$ by projected gradient descent.
We detail each step of the algorithm in the following subsections.

\begin{algorithm}[t]
    \caption{Distribution-Shift-Aware Projected Gradient Method (DSA-PGM) for Performative Prediction}
    \label{alg:performative_prediction_1st_order}
    \begin{algorithmic}[1]
        \REQUIRE Initial model parameter $\theta_0 \in \Theta_\delta$, batch size $b \in \mathbb{N}$,
        perturbation radius $\delta > 0$, and stepsize $\alpha>0$.
        \FOR{$t = 0, 1, 2, \dots$}
        \STATE Deploy models and collect samples for $i=1,\dots,n$:\\
        $\{ z_{t,j} \}_{j=1}^{b} \sim \mathcal{D}_{\beta(\theta_t)}, \quad
            \{ z_{t,i,j}^{\pm} \}_{j=1}^{b} \sim \mathcal{D}_{\beta(\theta_t \pm \delta e_i)}$.
        \label{line:deploy_and_collect}

        \STATE Estimate distribution parameters and Jacobian: \\
        $
            \begin{aligned}
                 & \hat\beta_t \gets \Pred \left(\{z_{t,j}\}_{j=1}^{b} \right), \, \hat\beta_{t,i}^{\pm} \gets \Pred \left(\{z_{t,i,j}^{\pm}\}_{j=1}^{b} \right),                                                                        \\
                 & \widehat{\pdv{\beta}{\theta}}(\theta_t) \gets \mqty[\frac{\hat\beta_{t,1}^{+} - \hat\beta_{t,1}^{-}}{2\delta},                                 & \dots, & \frac{\hat\beta_{t,n}^{+} - \hat\beta_{t,n}^{-}}{2\delta}].
            \end{aligned}
        $
        \label{line:1st_estimate}

        \STATE Compute gradient components and overall gradient: \\
        $
            \begin{aligned}
                \widehat{\nabla_1\mathcal{L}}(\theta_t) & \gets \mathbb{E}_{z \sim \mathcal{D}_{\hat\beta_t}} \qty[\nabla_\theta \ell(z; \theta_t)], \\
                \widehat{\nabla_2\mathcal{L}}(\theta_t) & \gets \widehat{\pdv{\beta}{\theta}}(\theta_t)^\top G(\theta_t; \hat\beta_t),               \\
                \widehat{\nabla\mathcal{L}}(\theta_t)   & \gets \widehat{\nabla_1\mathcal{L}}(\theta_t) + \widehat{\nabla_2\mathcal{L}}(\theta_t).
            \end{aligned}
        $
        \label{line:1st_grad}

        \STATE Update model parameters with the projection: \\
        $
            \theta_{t+1} \gets \Proj_{\Theta_\delta}\left(\theta_t - \alpha \widehat{\nabla\mathcal{L}}(\theta_t)\right).
        $
        \label{line:1st_update}
        \ENDFOR
    \end{algorithmic}
\end{algorithm}

\paragraph{Estimation of $\nabla_1\mathcal{L}(\theta_t)$}

First, we estimate $\nabla_1\mathcal{L}(\theta_t)$, the standard risk gradient.
We collect data $\{z_{t,j}\}_{j=1}^{b}$ under model parameters $\theta_t$ and estimate the distribution parameter $\hat\beta_t$ by $\Pred$.
By \cref{eq:nabla_L1}, $\nabla_1\mathcal{L}(\theta_t)$ is approximated by $\widehat{\nabla_1\mathcal{L}}(\theta_t)=\mathbb{E}_{z\sim\mathcal{D}_{\hat\beta_t}}[\nabla_\theta\ell(z;\theta_t)]$.
Once $\hat\beta_t$ is estimated, this expectation can be evaluated using a closed-form expression or deterministic numerical integration without additional samples.
In the following analysis, we assume that $\widehat{\nabla_1\mathcal{L}}(\theta_t)$ is computed exactly or with negligible numerical error because we focus on sample complexity.

\paragraph{Estimation of $\nabla_2\mathcal{L}(\theta_t)$}

Next, we estimate $\nabla_2\mathcal{L}(\theta_t)$, the indirect effect through the distribution shift.
For $z \in \mathbb{R}^d$, $\beta_0 \in B$, and $\theta_t \in \Theta$, we define the score function $s(z;\beta_0)$ and the term $G(\theta_t; \beta_0)$ in \cref{eq:nabla_L2} as follows:
\begin{align}
    s(z;\beta_0)         & \coloneqq \nabla_{\beta} \ln p(z;\beta_0), \label{eq:score_function}                                              \\
    G(\theta_t; \beta_0) & \coloneqq \mathbb{E}_{z \sim \mathcal{D}_{\beta_0}} \qty[\ell(z; \theta_t) s(z;\beta_0)]. \label{eq:G_definition}
\end{align}
Using \cref{eq:G_definition}, we can rewrite $\nabla_2\mathcal{L}(\theta_t)$ in \cref{eq:nabla_L2} as
\begin{equation*}
    \nabla_2\mathcal{L}(\theta_t)   = {\pdv{\beta}{\theta}}(\theta_t)^\top G(\theta_t; \beta(\theta_t)).
\end{equation*}
Thus, we need to estimate the Jacobian ${\pdv{\beta}{\theta}}(\theta_t)$ and the expectation term $G(\theta_t; \beta(\theta_t))$.

To estimate the Jacobian ${\pdv{\beta}{\theta}}(\theta_t) \in \mathbb{R}^{m \times n}$, we use centered finite differences.
Let $e_i$ denote the $i$-th standard basis vector ($1 \leq i \leq n$) in the model parameter space $\mathbb{R}^n$.
For every $i \in \{1, \dots, n\}$, collect data $\{z_{t,i,j}^{\pm}\}_{j=1}^{b}$ under the perturbed model $\theta_t \pm \delta e_i$ where $\delta > 0$ is a small perturbation radius.
Then, we estimate the distribution parameter $\hat\beta_{t,i}^{\pm}$ for the perturbed model by $\Pred$ and compute the finite-difference approximation of the Jacobian:
\begin{equation*}
    \widehat{\pdv{\beta}{\theta}}(\theta_t) \gets \mqty[\frac{\hat\beta_{t,1}^+ - \hat\beta_{t,1}^-}{2\delta}, & \hdots, & \frac{\hat\beta_{t,n}^+ - \hat\beta_{t,n}^-}{2\delta}].
\end{equation*}
Although $\theta_t \pm \delta e_i$ may not belong to the feasible set $\Theta$, remedies are discussed in the ``Model Parameter Update.''

The term $G(\theta_t; \beta(\theta_t))$ can be estimated similarly to $\nabla_1\mathcal{L}(\theta_t)$.
The key difference is that we can utilize a so-called baseline \citep[Sections 2.8 and 13.4]{sutton2018reinforcement}.
For $\beta_0 \in B$, we have
\begin{equation}
    G(\theta_t; \beta_0)   = \mathbb{E}_{z \sim \mathcal{D}_{\beta_0}} \qty[(\ell(z; \theta_t) - \bar{\ell}(\theta_t;\beta_0)) s(z;\beta_0)],     \label{eq:G}
\end{equation}
where $\bar{\ell}$ is the expected loss:
\begin{equation}
    \bar{\ell}(\theta_t;\beta_0) \coloneqq \mathbb{E}_{z \sim \mathcal{D}_{\beta_0}}[\ell(z;\theta_t)]. \label{eq:baseline}
\end{equation}
This subtraction is justified since the expectation of $s(z;\beta_0)$ is zero under the appropriate assumptions.
By \cref{eq:score_function}, the log-derivative trick, and \cref{item:regularity} later specified, we have
\begin{equation*}
      \mathbb{E}_{z \sim \mathcal{D}_{\beta_0}}[s(z;\beta_0)] = \int p(z;\beta_0) s(z;\beta_0) \dd{\mu(z)}
    = \int \nabla_{\beta} p(z;\beta_0) \dd{\mu(z)} = \nabla_{\beta} \int p(z;\beta_0) \dd{\mu(z)} = 0.
\end{equation*}
Compared with no subtraction ($\bar{\ell}(\theta_t;\beta_0)=0$), this can reduce the estimator variance when the computation is inexact.

\paragraph{Model Parameter Update}

Finally, we update the model parameters with a standard projected gradient descent step using $\widehat{\nabla\mathcal{L}}(\theta_t) = \widehat{\nabla_1\mathcal{L}}(\theta_t) + \widehat{\nabla_2\mathcal{L}}(\theta_t)$.
For a rigorous treatment, we define the shrunk effective feasible set as
\begin{equation*}
    \Theta_\delta \coloneqq \left\{ \theta\in\Theta \relmiddle| \theta\pm\delta e_i \in \Theta,\ i = 1,\dots,n \right\}
\end{equation*}
and we also define the projection onto it as $\Proj(x) \coloneqq \argmin_{y \in \Theta_\delta} \norm{x - y}$.
When $\Theta_\delta$ is a nonempty closed convex set, $\Proj$ is uniquely defined for all $x \in \mathbb{R}^n$ \citep{valentine1964convex}.
When $\Theta = \Theta_\delta = \mathbb{R}^n$, we have $\Proj(x) = x$ for all $x \in \mathbb{R}^n$.
Then, with a constant stepsize $\alpha$, we can update the model parameters as $\theta_{t+1} \gets \Proj(\theta_t - \alpha \widehat{\nabla\mathcal{L}}(\theta_t))$.

We use $\Theta_\delta$ because estimating $\nabla_2\mathcal{L}$ requires deploying perturbed models with parameters $\theta_t \pm \delta e_i$, which may leave the feasible region.
By using $\Theta_\delta$ for $\Proj$, we can rigorously ensure $\theta_t \pm \delta e_i \in \Theta$ over all iterates.
In many applications, $\Theta$ is sufficiently large so that the iterates stay inside $\Theta$, or deployments are possible even outside $\Theta$.
Thus, we can also simply use $\Theta$ for $\Proj$.

\section{Theoretical Analysis}

In this section, we establish theoretical guarantees for \cref{alg:performative_prediction_1st_order}.
For $\beta_0 \in B$, we define $I(\beta_0)$ as the Fisher information matrix~\citep{malagoInformationGeometryGaussian2015,lyTutorialFisherInformation2017}:
\begin{equation}
    I(\beta_0) \coloneqq \mathbb{E}_{z \sim \mathcal{D}_{\beta_0}} \left[ s(z;\beta_0) s(z;\beta_0)^\top \right]. \label{eq:Fisher_information}
\end{equation}
We also define $\beta(\Theta) \coloneqq \{\beta(\theta)\}_{\theta \in \Theta}$.

\paragraph{Assumptions}

We state the assumptions used throughout the paper.
These assumptions are sufficiently general and consistent with the examples listed under ``Our Contributions'' in \cref{sec:intro}.
Details are provided in Appendix~\ref{app:examples_satisfying_assumptions}.
\begin{assumption}\label{asm:main}
    We assume the following conditions:
    \begin{assumptionitems}
        \item The model parameter space $\Theta$ and its shrinkage $\Theta_\delta$ are nonempty, closed, convex, and compact sets. \label{item:Theta_compact_convex}

        \item The distribution parameter space $B$ is a convex and compact set, and satisfies $\beta(\Theta)\subseteq B$. \label{item:B_compact_convex}

        \item The distribution parameter mapping $\beta$ is $C^3$. \label{item:beta_C3}

        \item The density or mass functions $\{p(\cdot;\beta_0)\}_{\beta_0 \in B}$ have common support, $p(z;\cdot)$ is $C^2$ on $B$, and differentiation under the integral sign with respect to $\beta_0$ is valid. \label{item:regularity}

        \item The norms of the score function $\norm{s(z;\beta_0)}$ and its derivative $\norm{\pdv{\beta} s(z;\beta_0)}$ have at most polynomial growth in $\norm{z}$, uniformly over $\beta_0\in B$. \label{item:poly_score}

        \item The loss function $\ell(z;\theta)$ is $C^2$. Moreover, $\abs{\ell(z;\theta)}$, $\norm{\nabla_\theta \ell(z;\theta)}$, and $\norm{\nabla_\theta^2 \ell(z;\theta)}$ have at most polynomial growth in $\norm{z}$, uniformly over $\theta\in\Theta$. \label{item:poly_ell}

        \item The family $\{\mathcal{D}_{\beta_0}\}_{\beta_0\in B}$ has uniformly upper-bounded finite moments up to order $q_{\max}$ defined in \cref{eq:q_max_definition}. \label{item:light_tailed}

        \item The distribution parameters admit a uniform estimation bound: there exist constants $\sigma_{\max} \geq 0$ and $b_{\min} \in \mathbb{N}$ such that, for any $\theta \in \Theta$ and any $b \geq b_{\min}$, the estimator $\hat\beta = \Pred(\{z_i\}_{i=1}^b) \in B$ satisfies
        \begin{equation*}
            \mathbb{E}\qty[\norm{\hat\beta - \beta(\theta)}^2] \le \frac{\sigma_{\max}^2}{b},
        \end{equation*}
        where $z_i$ are i.i.d.\ samples from $\mathcal{D}_{\beta(\theta)}$. \label{item:identifiability}
    \end{assumptionitems}
\end{assumption}

\Cref{asm:main} holds in a broad range of practical applications, going beyond the fixed-variance Gaussian location family and non-degenerate losses considered by existing gradient-based performative methods.
For \cref{item:Theta_compact_convex,item:B_compact_convex}, compact parameter spaces are often natural in practice, and one may further take $\Theta=\mathbb{R}^n$ and assume that the iterates $\{\theta_t\}$ remain in a sufficiently large compact subset, which is often the case in practice.
For \cref{item:beta_C3}, the condition means that the distribution shift is sufficiently smooth, which is natural in many applications.
For \cref{item:regularity,item:poly_score,item:poly_ell}, common parameterized families, score functions, and losses satisfy these conditions, including the previously mentioned examples.
For \cref{item:light_tailed}, even some heavy-tailed distributions satisfy the condition, such as the log-normal family with sufficiently well-behaved parameters.
Finally, for \cref{item:identifiability}, the $\order{b^{-1}}$ estimation rate yields the sample complexity stated below and facilitates comparison with existing methods.
A slower and consistent estimator would worsen the sample complexity but would not preclude a convergence guarantee.

Under \cref{asm:main}, we can derive the following technical conditions.
The proof is provided in Appendix~\ref{app:proof_prop1}. 
\newcommand{\propVerification}{%
    Under \cref{asm:main}, \cref{asm:PL_bounded,asm:ell_theta,asm:beta_theta,asm:G_Lipschitz,asm:ell_bounded,asm:Fisher} hold.%
}
\begin{proposition}\label{prop:technical_conditions}
    \propVerification
\end{proposition}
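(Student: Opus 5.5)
Since the statements of the technical conditions (boundedness of the performative loss, smoothness of $\ell$ in $\theta$, smoothness of $\beta$, Lipschitzness of $G$, boundedness of $\ell$ and of the Fisher information) are not reproduced above, I will treat them as the natural uniform bounds and Lipschitz constants that the later analysis needs. Each will be derived from \cref{asm:main} by the same mechanism: polynomial growth together with uniformly bounded moments gives uniform integrability, and compactness gives uniform constants.

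\textbf{Step 1: a moment lemma.} First I will prove the following. If $f(z;\theta,\beta_0)$ satisfies $\abs{f}\le C(1+\norm{z}^k)$ uniformly over the compact sets, then
\begin{equation*}
\sup_{\beta_0\in B}\mathbb{E}_{z\sim\mathcal{D}_{\beta_0}}\abs{f}\le C\,\bigl(1+\sup_{\beta_0\in B}\mathbb{E}_{z\sim\mathcal{D}_{\beta_0}}\norm{z}^k\bigr)<\infty
\end{equation*}
whenever $k\le q_{\max}$, by \cref{item:light_tailed}. Products of polynomially growing factors, such as $\ell\, s$, $\ell\, s s^\top$, $\ell\,\partial_\beta s$, $\nabla_\theta\ell\, s^\top$ and $s s^\top$, again have polynomial growth. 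I will check that their degrees are at most $q_{\max}$, which is presumably exactly how \cref{eq:q_max_definition} is defined. Cauchy--Schwarz then reduces expectations of products to second moments where needed.

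\textbf{Step 2: the conditions on $\ell$, $\beta$ and the Fisher information.} Boundedness of $\ell$ in expectation, the bounds on $\mathbb{E}\norm{\nabla_\theta\ell}$ and $\mathbb{E}\norm{\nabla_\theta^2\ell}$, and boundedness of $\norm{I(\beta_0)}\le\mathbb{E}\norm{s}^2$ all follow from Step 1 applied with \cref{item:poly_ell,item:poly_score}. Lipschitz continuity of $\nabla_\theta\ell$ in expectation follows from the mean value theorem together with the Hessian bound, using convexity of $\Theta$. For $\beta$, the $C^3$ property (\cref{item:beta_C3}) on the compact set $\Theta$ (\cref{item:Theta_compact_convex}) gives bounded $\beta$, $\partial\beta/\partial\theta$ and second and third derivatives. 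These bounds are what control the finite-difference bias, which is $\order{\delta^2}$. Because the points $\theta\pm\delta e_i$ lie in $\Theta$ for $\theta\in\Theta_\delta$, all evaluations stay inside the compact set.

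\textbf{Step 3: Lipschitzness of $G$ and boundedness of $\mathcal{L}$.} This is the main obstacle. I will differentiate $G(\theta;\beta_0)=\int \ell(z;\theta)\nabla_\beta p(z;\beta_0)\,\dd{\mu(z)}$ in both arguments, using the interchange allowed by \cref{item:regularity}. The derivative in $\theta$ is $\mathbb{E}[s\,\nabla_\theta\ell^\top]$. The derivative in $\beta$ is $\mathbb{E}[\ell\,(\partial_\beta s + s s^\top)]$, since $\nabla_\beta^2 p = p(\partial_\beta s+ss^\top)$. Both are uniformly bounded by Step 1, and convexity of $\Theta\times B$ then yields joint Lipschitzness. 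The delicate point is to justify the interchange for these particular integrands, not just for $p$ itself. I would do this with dominated convergence: the dominating function is a polynomial in $\norm{z}$ times the density, and I need this bound uniform over a neighborhood in $B$. Its integrability comes from the uniform moment bound, and it also relies on the common support assumption. The same computation gives boundedness of $\mathcal{L}$, of $\nabla\mathcal{L}$ and of the performative Hessian. These will feed the $\Lsm$-smoothness result, and they complete the verification of all six conditions.
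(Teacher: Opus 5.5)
Your plan follows the paper's proof. It uses a uniform polynomial-moment lemma (the paper's \cref{lem:uniform-polynomial-moments}) and degree checks against $q_{\max}$. The constants for $\beta$ come from $C^3$ on the compact convex $\Theta$. Lipschitzness of $G$ comes from the uniformly bounded partials $\mathbb{E}[s\nabla_\theta\ell^\top]$ and $\mathbb{E}[\ell(\partial_\beta s+ss^\top)]$ plus the mean-value theorem on $\Theta\times B$. For the record, the conditions you could not see ask for three second-moment quantities. They need $\sup_{\beta_0}\mathbb{E}\norm{\nabla_\theta\ell}^2$ rather than $\mathbb{E}\norm{\nabla_\theta\ell}$. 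They need a bound on $\mathrm{Var}[\ell]$, hence on $\mathbb{E}\ell^2$. They need a bound on $\Tr(I(\beta_0))=\mathbb{E}\norm{s}^2$. The degrees $2q_4$, $2q_3$, $2q_1$ are all at most $q_{\max}$, so your Step~1 covers them.

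The step that fails as written is your justification of the interchange in $\beta$. Dominated convergence needs a single integrable envelope, here $\sup_{\beta'\in N}\abs{\ell(z;\theta)}\,\norm{\partial_\beta s(z;\beta')+s(z;\beta')s(z;\beta')^\top}\,p(z;\beta')$. However, \cref{item:light_tailed} only controls $\sup_{\beta'}\int\norm{z}^k p(z;\beta')\dd{\mu(z)}$, not $\int\norm{z}^k\sup_{\beta'}p(z;\beta')\dd{\mu(z)}$. So the uniform moment bound does not by itself make your envelope integrable.

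The paper sidesteps this by reading \cref{item:regularity} as granting the interchange for $G$ directly. If you want an argument that uses only the moment bounds, drop differentiability in $\beta$ and prove the Lipschitz estimate directly:
\begin{enumerate}
\item Write $G(\theta;\beta_0)=\int\ell\,\nabla_\beta p\dd{\mu(z)}$.
\item Expand $\nabla_\beta p(z;\beta_1)-\nabla_\beta p(z;\beta_0)=\int_0^1\nabla_\beta^2p(z;\beta_t)(\beta_1-\beta_0)\dd{t}$ along the segment in convex $B$.
\item Use $\nabla^2_\beta p=p(\partial_\beta s+ss^\top)$ and swap the $t$- and $z$-integrals by Tonelli, exactly as in the proof of \cref{lem:grad_ell_lip_beta}.
\end{enumerate}
This gives $\norm{G(\theta;\beta_1)-G(\theta;\beta_0)}\le\sup_{\beta\in B}\mathbb{E}_{\beta}[\abs{\ell}\,\norm{\partial_\beta s+ss^\top}]\,\norm{\beta_1-\beta_0}$. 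In the $\theta$-direction your dominated-convergence argument is fine, because $\beta_0$ is fixed there and the envelope is a polynomial times the single density $p(z;\beta_0)$. The triangle inequality then yields the sum form required by \cref{asm:G_Lipschitz}.
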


\begin{condition}\label{asm:PL_bounded}
    The performative loss $\mathcal{L}$ is bounded below:
    \begin{equation*}
        \mathcal{L}_* \coloneqq \inf_{\theta \in \Theta} \mathcal{L}(\theta) > -\infty.
    \end{equation*}
\end{condition}
\begin{condition}\label{asm:ell_theta}
    The loss function $\ell$ is $L_{\ell,\theta}^{\mathrm{Lip}}$-Lipschitz continuous and $L_{\ell,\theta}^{\mathrm{sm}}$-smooth:
    \begin{align*}
        \sup_{\beta \in B} \mathbb{E}_{z \sim \mathcal{D}_{\beta}} \qty[\norm{\nabla_{\theta} \ell(z; \theta)}^2]^{1/2}                & \le L_{\ell,\theta}^{\mathrm{Lip}},                        \\
        \mathbb{E}_{z \sim \mathcal{D}_{\beta(\theta)}}\qty[\norm{\nabla_{\theta} \ell(z; \theta) - \nabla_{\theta} \ell(z; \theta')}] & \le L_{\ell,\theta}^{\mathrm{sm}} \norm{\theta - \theta'}.
    \end{align*}
\end{condition}
\begin{condition}\label{asm:beta_theta}
    The mapping $\beta$ is $L_{\beta,\theta}^{\mathrm{Lip}}$-Lipschitz continuous, $L_{\beta,\theta}^{\mathrm{sm}}$-smooth, and $L_{\beta,\theta}^{\mathrm{H}}$-Hessian Lipschitz continuous:
    \begin{align*}
        \norm{\beta(\theta) - \beta(\theta')}                                 & \le L_{\beta,\theta}^{\mathrm{Lip}} \norm{\theta - \theta'}, \\
        \norm{{\pdv{\beta}{\theta}}(\theta) - {\pdv{\beta}{\theta}}(\theta')} & \le L_{\beta,\theta}^{\mathrm{sm}} \norm{\theta - \theta'},  \\
        \norm{{\pdv[2]{\beta}{\theta}}(\theta) - {\pdv[2]{\beta}{\theta}}(\theta')}
                                                                              & \le L_{\beta,\theta}^{\mathrm{H}} \norm{\theta - \theta'}.
    \end{align*}
\end{condition}
\begin{condition}\label{asm:G_Lipschitz}
    The gradient component function $G$ is jointly $(L_{G,\beta}^{\mathrm{Lip}},L_{G,\theta}^{\mathrm{Lip}})$-Lipschitz continuous:
    \begin{equation*}
        \norm{G(\theta;\beta_0)-G(\theta';\beta_0')}
        \le
        L_{G,\beta}^{\mathrm{Lip}} \norm{\beta_0-\beta_0'}
        +
        L_{G,\theta}^{\mathrm{Lip}} \norm{\theta-\theta'}.
    \end{equation*}
\end{condition}
\begin{condition}\label{asm:ell_bounded}
    The loss variance is uniformly bounded by a constant $M_\ell \geq 0$ for all $\theta \in \Theta$ and $\beta_0 \in B$:
    \begin{equation*}
        \mathrm{Var}_{z \sim \mathcal{D}_{\beta_0}} \left[ \ell(z; \theta) \right] \leq M_\ell.
    \end{equation*}
\end{condition}
\begin{condition}\label{asm:Fisher}
    The trace of the Fisher information matrix is uniformly bounded by a constant $I_{\max} \geq 0$ for all $\beta_0 \in B$:
    \begin{equation*}
        \Tr(I(\beta_0)) \le I_{\max}.
    \end{equation*}
\end{condition}

\paragraph{Technical Propositions}

Now, we derive the convergence guarantee for the proposed method.
The first key result is the $\Lsm$-smoothness of the performative loss.
\newcommand{\propPLSmoothStatement}[1][align]{%
    Suppose that \cref{asm:main} holds.
    Then, the performative loss $\mathcal{L}$ is $\Lsm$-smooth; for any $\theta,\theta' \in \Theta$,
    \begin{equation*}
        \norm{\nabla \mathcal{L}(\theta)-\nabla \mathcal{L}(\theta')}
        \le
        \Lsm \norm{\theta-\theta'},
    \end{equation*}
    where
    \ifthenelse{\equal{#1}{equation}}{%
        \begin{equation*}
            \Lsm
            \coloneqq
            L_{\ell,\theta}^{\mathrm{sm}} + L_{\ell,\theta}^{\mathrm{Lip}} L_{\beta,\theta}^{\mathrm{Lip}} \sqrt{I_{\max}}
            + L_{\beta,\theta}^{\mathrm{sm}} \sqrt{M_\ell I_{\max}} + L_{\beta,\theta}^{\mathrm{Lip}} \qty(L_{G,\beta}^{\mathrm{Lip}} L_{\beta,\theta}^{\mathrm{Lip}}+L_{G,\theta}^{\mathrm{Lip}}).
        \end{equation*}
    }{%
        \begin{align*}
            \Lsm
            \coloneqq &
            L_{\ell,\theta}^{\mathrm{sm}} + L_{\ell,\theta}^{\mathrm{Lip}} L_{\beta,\theta}^{\mathrm{Lip}} \sqrt{I_{\max}}                                                                                         \\
                      & \, + L_{\beta,\theta}^{\mathrm{sm}} \sqrt{M_\ell I_{\max}} + L_{\beta,\theta}^{\mathrm{Lip}} \qty(L_{G,\beta}^{\mathrm{Lip}} L_{\beta,\theta}^{\mathrm{Lip}}+L_{G,\theta}^{\mathrm{Lip}}).
        \end{align*}
    }
}
\begin{proposition}\label{prop:PL_LSmooth}
    \propPLSmoothStatement[equation]
\end{proposition}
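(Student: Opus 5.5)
We use the decomposition \cref{eq:nabla_L} and bound the two components separately, via the Conditions supplied by \cref{prop:technical_conditions}. By the triangle inequality,
\[
\norm{\nabla\mathcal{L}(\theta)-\nabla\mathcal{L}(\theta')}\le \norm{\nabla_1\mathcal{L}(\theta)-\nabla_1\mathcal{L}(\theta')}+\norm{\nabla_2\mathcal{L}(\theta)-\nabla_2\mathcal{L}(\theta')}.
\]
The target constant suggests the split: the first term should give $L_{\ell,\theta}^{\mathrm{sm}} + L_{\ell,\theta}^{\mathrm{Lip}} L_{\beta,\theta}^{\mathrm{Lip}}\sqrt{I_{\max}}$, and the second the remaining two terms.

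\textbf{First component.} We insert the intermediate quantity $\mathbb{E}_{z\sim\mathcal{D}_{\beta(\theta)}}[\nabla_\theta\ell(z;\theta')]$. The difference with $\nabla_1\mathcal{L}(\theta)$ is at most $L_{\ell,\theta}^{\mathrm{sm}}\norm{\theta-\theta'}$ by the second inequality of \cref{asm:ell_theta}, after moving the norm inside the expectation.

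The remaining piece changes only the distribution, from $\beta(\theta)$ to $\beta(\theta')$, with the integrand $f(z)=\nabla_\theta\ell(z;\theta')$ fixed. Write $\beta_\tau=\beta(\theta')+\tau(\beta(\theta)-\beta(\theta'))\in B$; this lies in $B$ by convexity (\cref{item:B_compact_convex}). Differentiating under the integral (\cref{item:regularity}) gives
\[
\mathbb{E}_{\beta(\theta)}[f]-\mathbb{E}_{\beta(\theta')}[f]=\int_0^1 \mathbb{E}_{z\sim\mathcal{D}_{\beta_\tau}}\!\big[f(z)\,s(z;\beta_\tau)^\top\big](\beta(\theta)-\beta(\theta'))\,\dd\tau .
\]
For any unit vector $u$ and $v=\beta(\theta)-\beta(\theta')$, Cauchy--Schwarz gives
\[
\abs{\mathbb{E}[(u^\top f)(s^\top v)]}\le \mathbb{E}[\norm{f}^2]^{1/2}\,(v^\top I(\beta_\tau)v)^{1/2}\le L_{\ell,\theta}^{\mathrm{Lip}}\sqrt{\Tr I(\beta_\tau)}\,\norm{v}.
\]
Here we use the uniform-in-$\beta$ first inequality of \cref{asm:ell_theta} and $\lambda_{\max}\le\Tr$ together with \cref{asm:Fisher}. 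Combined with $\norm{v}\le L_{\beta,\theta}^{\mathrm{Lip}}\norm{\theta-\theta'}$ from \cref{asm:beta_theta}, this yields the second term.

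\textbf{Second component.} Write $J(\theta)=\pdv{\beta}{\theta}(\theta)$ and $g(\theta)=G(\theta;\beta(\theta))$, so that
\[
J(\theta)^\top g(\theta)-J(\theta')^\top g(\theta')=(J(\theta)-J(\theta'))^\top g(\theta)+J(\theta')^\top(g(\theta)-g(\theta')).
\]
For the first piece, $\norm{J(\theta)-J(\theta')}\le L_{\beta,\theta}^{\mathrm{sm}}\norm{\theta-\theta'}$. We bound $\norm{g(\theta)}$ using the baseline form \cref{eq:G}: taking $u^\top G=\mathbb{E}[(\ell-\bar\ell)\,u^\top s]$ and applying Cauchy--Schwarz gives $\norm{G}\le\sqrt{M_\ell}\sqrt{\lambda_{\max}(I)}\le\sqrt{M_\ell I_{\max}}$ by \cref{asm:ell_bounded,asm:Fisher}. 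For the second piece, $\norm{J(\theta')}\le L_{\beta,\theta}^{\mathrm{Lip}}$, which follows from the Lipschitz bound on $\beta$ since $\Theta$ is convex with nonempty relative structure; in full generality it can be taken as the definition of the sup of the Jacobian norm. Then \cref{asm:G_Lipschitz} gives $\norm{g(\theta)-g(\theta')}\le (L_{G,\beta}^{\mathrm{Lip}}L_{\beta,\theta}^{\mathrm{Lip}}+L_{G,\theta}^{\mathrm{Lip}})\norm{\theta-\theta'}$. Summing all the bounds gives exactly $\Lsm$.

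\textbf{Main obstacle.} The delicate step is the distribution-change bound in the first component. It requires justifying the interpolation path inside $B$, exchanging differentiation and integration for the vector-valued integrand $\nabla_\theta\ell$, and using the uniform-over-$B$ second moment rather than just at $\beta(\theta)$. This is exactly why \cref{asm:ell_theta} takes a supremum over $\beta\in B$. Everything else is bookkeeping with the triangle inequality.
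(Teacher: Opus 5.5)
Your proposal is correct and follows essentially the same route as the paper. The shared steps are the split of $\nabla\mathcal{L}$ into $\nabla_1\mathcal{L}$ and $\nabla_2\mathcal{L}$, the change-of-distribution bound along a segment in $B$ via Cauchy--Schwarz against the Fisher information, and the product-rule split of $\nabla_2\mathcal{L}$ using $\norm{G(\theta;\beta_0)}\le\sqrt{M_\ell I_{\max}}$ from the baseline form. The differences are cosmetic. You insert the intermediate term in the opposite order and apply Cauchy--Schwarz directionally, which gives $\lambda_{\max}$ before relaxing to the trace. The paper instead applies the fundamental theorem of calculus pointwise to $p(z;\cdot)$ and uses Tonelli, so it never needs to differentiate $\tau\mapsto\mathbb{E}_{\beta_\tau}[f]$ under the integral.
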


The proof is provided in Appendix~\ref{app:proof_prop2}.
This smoothness result permits regular parametric families under the required conditions, extending the existing result \citep[Lemma~1]{rayDecisionDependentRiskMinimization2022}, which assumes a location--scale distribution family and globally Lipschitz derivatives of the loss.

Next, for stepsize $\alpha > 0$ and parameter $\theta$, we define the gradient mapping $\mathcal{G}_{\alpha}(\theta)$, which generalizes the gradient to constrained optimization problems \citep{beckChapter10Proximal2017}:
\begin{equation*}
    \begin{dcases}
        \mathcal{G}_{\alpha}(\theta)
         & \coloneqq
        \frac{1}{\alpha}
        \qty(\theta - \Proj\qty(\theta - \alpha \nabla \mathcal{L}(\theta))), \\
        \widehat{\mathcal{G}_{\alpha}}(\theta)
         & \coloneqq
        \frac{1}{\alpha}
        \qty(\theta - \Proj\qty(\theta - \alpha \widehat{\nabla\mathcal{L}}(\theta))).
    \end{dcases}
\end{equation*}
By definition, $\mathcal{G}_{\alpha}(\theta) = \nabla \mathcal{L}(\theta)$ if $\Theta = \mathbb{R}^n$, and
\begin{equation}
    \theta_{t+1}  = \Proj\qty(\theta_t - \alpha \widehat{\nabla\mathcal{L}}(\theta_t)) = \theta_t - \alpha \widehat{\mathcal{G}_{\alpha}}(\theta_t), \label{eq:gradient_mapping_update}
\end{equation}
which naturally generalizes the standard gradient descent method $\theta_{t+1} = \theta_t - \alpha \widehat{\nabla \mathcal{L}}(\theta_t)$.

The next proposition gives a general bound on the average squared norm of the gradient mapping.

\newcommand{\propBoundGradientNormStatement}[1][align]{%
    Suppose that \cref{asm:main} holds.
    Define $\{\theta_t\}_{t=0}^{T}$ as the sequence generated by the projected gradient descent $\theta_{t+1}=\Proj\qty(\theta_t-\alpha\widehat{\nabla\mathcal{L}}(\theta_t))$, initialized at $\theta_0 \in \Theta_\delta$ with a stepsize $0 < \alpha \leq 1/\Lsm$.
    Then, the mean squared gradient mapping up to iteration $T$ is bounded as
    \ifthenelse{\equal{#1}{equation}}{%
        \begin{equation*}
            \frac{1}{T} \sum_{t=0}^{T-1} \mathbb{E} \left[\norm{\mathcal{G}_{\alpha}(\theta_t)}^2 \right]
            \le \frac{2 \qty(\mathcal{L}(\theta_0)-\mathcal{L}_*)}{\alpha T}+\frac{1}{T}\sum_{t=0}^{T-1}\mathbb{E}\left[\norm{\nabla\mathcal{L}(\theta_t) - \widehat{\nabla\mathcal{L}}(\theta_t)}^2\right].
        \end{equation*}
    }{%
        \begin{align*}
                  & \frac{1}{T} \sum_{t=0}^{T-1} \mathbb{E} \left[\norm{\mathcal{G}_{\alpha}(\theta_t)}^2 \right]                                                                                                \\
            \le{} & \frac{2 \qty(\mathcal{L}(\theta_0)-\mathcal{L}_*)}{\alpha T}+\frac{1}{T}\sum_{t=0}^{T-1}\mathbb{E}\left[\norm{\nabla\mathcal{L}(\theta_t) - \widehat{\nabla\mathcal{L}}(\theta_t)}^2\right].
        \end{align*}
    }
}
\begin{proposition}\label{prop:bound_gradient_norm}
    \propBoundGradientNormStatement[equation]
\end{proposition}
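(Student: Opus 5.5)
We will prove the bound through a one-step descent inequality for projected gradient descent with an inexact gradient, and then telescope. Write $g_t \coloneqq \widehat{\nabla\mathcal{L}}(\theta_t)$, $e_t \coloneqq g_t - \nabla\mathcal{L}(\theta_t)$, and $\hat G_t \coloneqq \widehat{\mathcal{G}_{\alpha}}(\theta_t)$, so that $\theta_{t+1} = \theta_t - \alpha \hat G_t$ by \cref{eq:gradient_mapping_update}. All iterates stay in $\Theta_\delta \subseteq \Theta$, which is closed and convex by \cref{item:Theta_compact_convex}. By \cref{prop:PL_LSmooth} the descent lemma applies on $\Theta$:
\begin{equation*}
    \mathcal{L}(\theta_{t+1}) \le \mathcal{L}(\theta_t) - \alpha \langle \nabla\mathcal{L}(\theta_t), \hat G_t\rangle + \frac{\Lsm\alpha^2}{2}\norm{\hat G_t}^2 .
\end{equation*}

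Next we use the variational characterization of the projection. Since $\theta_{t+1} = \Proj(\theta_t - \alpha g_t)$ and $\theta_t \in \Theta_\delta$, we have $\langle \theta_t - \alpha g_t - \theta_{t+1}, \theta_t - \theta_{t+1}\rangle \le 0$. This gives $\langle g_t, \hat G_t\rangle \ge \norm{\hat G_t}^2$. Substituting $\nabla\mathcal{L}(\theta_t) = g_t - e_t$ yields
\begin{equation*}
    \mathcal{L}(\theta_{t+1}) \le \mathcal{L}(\theta_t) - \alpha\Bigl(1 - \tfrac{\Lsm\alpha}{2}\Bigr)\norm{\hat G_t}^2 + \alpha\langle e_t, \hat G_t\rangle .
\end{equation*}

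The main obstacle is that the statement is phrased in terms of the exact mapping $\mathcal{G}_\alpha(\theta_t)$, not $\hat G_t$, and the constants are tight: the coefficient $2$ and the error term with coefficient $1$. A naive Young split of $\langle e_t,\hat G_t\rangle$ loses constants. I would therefore bound the descent directly in terms of $\mathcal{G}_\alpha(\theta_t)$ instead. Let $\bar\theta \coloneqq \Proj(\theta_t - \alpha\nabla\mathcal{L}(\theta_t))$. Apply the projection inequality for $\theta_{t+1}$ with test point $\bar\theta$, and the one for $\bar\theta$ with test points $\theta_t$ and $\theta_{t+1}$. Combining these three-point inequalities with $\alpha \le 1/\Lsm$ should give a bound of the form
\begin{equation*}
    \mathcal{L}(\theta_{t+1}) \le \mathcal{L}(\theta_t) - \frac{\alpha}{2}\norm{\mathcal{G}_\alpha(\theta_t)}^2 + \frac{\alpha}{2}\norm{e_t}^2 .
\end{equation*}
This is the standard inexact-proximal-gradient estimate, as in Ghadimi--Lan type analyses.

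To reach it, I would write
\begin{equation*}
    \theta_{t+1} - \bar\theta = \Proj(\theta_t - \alpha g_t) - \Proj(\theta_t - \alpha\nabla\mathcal{L}(\theta_t)).
\end{equation*}
By nonexpansiveness, $\norm{\theta_{t+1} - \bar\theta} \le \alpha\norm{e_t}$. I would then bound $\mathcal{L}(\theta_{t+1}) - \mathcal{L}(\theta_t)$ via the descent lemma along $\theta_{t+1} - \theta_t$, and compare with the exact step through the projection optimality conditions. Completing the square, $\langle e_t, \cdot\rangle \le \tfrac{1}{2}\norm{e_t}^2 + \tfrac{1}{2}\norm{\cdot}^2$, absorbs the cross term into the $(1-\Lsm\alpha)\ge 0$ slack. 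If the exact constants resist, I would fall back to relating $\norm{\mathcal{G}_\alpha(\theta_t)} \le \norm{\hat G_t} + \norm{e_t}$ via nonexpansiveness and adjusting the splitting.

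Finally, multiply the one-step estimate by $2/\alpha$, take total expectation, and sum over $t = 0,\dots,T-1$. The sum of $\mathcal{L}(\theta_t) - \mathcal{L}(\theta_{t+1})$ telescopes to $\mathcal{L}(\theta_0) - \mathbb{E}[\mathcal{L}(\theta_T)] \le \mathcal{L}(\theta_0) - \mathcal{L}_*$, using \cref{asm:PL_bounded} via \cref{prop:technical_conditions}. Dividing by $T$ gives the claimed bound.
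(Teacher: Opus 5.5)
Your outer structure (descent lemma with $\alpha\le 1/\Lsm$, a one-step bound, telescoping against $\mathcal{L}_*$) is the paper's. However, the one-step inequality
\[
\mathcal{L}(\theta_{t+1})\le\mathcal{L}(\theta_t)-\tfrac{\alpha}{2}\norm{\mathcal{G}_\alpha(\theta_t)}^2+\tfrac{\alpha}{2}\norm{e_t}^2,
\]
which is the whole content of the proposition, is only asserted (``should give''), and the mechanism you sketch for it does not work.

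Your second display tests the projection inequality for $\theta_{t+1}$ at $\theta_t$, and this loses information that cannot be recovered. That display implies the target only if $\norm{\mathcal{G}_\alpha(\theta_t)}^2\le\norm{\hat G_t-e_t}^2+(1-\Lsm\alpha)\norm{\hat G_t}^2$. Take $n=1$, $\Theta_\delta=[0,1]$, $\theta_t=1$, $\alpha\nabla\mathcal{L}(\theta_t)=1$, $\alpha g_t=2$. Then $\bar\theta=\theta_{t+1}=0$ and $\mathcal{G}_\alpha(\theta_t)=\hat G_t=e_t=1/\alpha$. The requirement becomes $\alpha^{-2}\le(1-\Lsm\alpha)\alpha^{-2}$, which is false as soon as $\Lsm\alpha>0$, so no splitting of $\alpha\langle e_t,\hat G_t\rangle$ can rescue that route.

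The other pieces of your sketch fail too:
\begin{itemize}
\item Absorbing a Young remainder into the $(1-\Lsm\alpha)$ slack is impossible at $\alpha=1/\Lsm$, which the statement allows.
\item Nonexpansiveness $\norm{\theta_{t+1}-\bar\theta}\le\alpha\norm{e_t}$ only bounds a norm. It does not give the sign condition that actually has to be discarded.
\item The fallback $\norm{\mathcal{G}_\alpha(\theta_t)}\le\norm{\hat G_t}+\norm{e_t}$ changes the constants, so it proves a different bound.
\end{itemize}

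The missing idea is to combine two of the three inequalities you list, exactly. Set $D\coloneqq\mathcal{G}_\alpha(\theta_t)-\hat G_t$. The projection inequality for $\bar\theta$ tested at $\theta_t$ reads $\langle\mathcal{G}_\alpha(\theta_t),\mathcal{G}_\alpha(\theta_t)-\nabla\mathcal{L}(\theta_t)\rangle\le 0$. The one for $\theta_{t+1}$ tested at $\bar\theta$ reads $\langle g_t-\hat G_t,D\rangle\le 0$. Using $\norm{\hat G_t}^2-\norm{\mathcal{G}_\alpha(\theta_t)}^2=-2\langle\hat G_t,D\rangle-\norm{D}^2$ and $\nabla\mathcal{L}(\theta_t)-\hat G_t=-e_t+(g_t-\hat G_t)$, one gets
\begin{align*}
\norm{\mathcal{G}_\alpha(\theta_t)}^2-2\langle\nabla\mathcal{L}(\theta_t),\hat G_t\rangle+\norm{\hat G_t}^2
&=2\langle\mathcal{G}_\alpha(\theta_t),\mathcal{G}_\alpha(\theta_t)-\nabla\mathcal{L}(\theta_t)\rangle-2\langle e_t,D\rangle+2\langle g_t-\hat G_t,D\rangle-\norm{D}^2\\
&\le-2\langle e_t,D\rangle-\norm{D}^2\le\norm{e_t}^2 .
\end{align*}
Multiply this by $\alpha/2$ and insert it into the descent lemma, using $\alpha\le 1/\Lsm$ only to replace $\Lsm\alpha^2/2$ by $\alpha/2$. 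This gives the one-step bound with exactly the stated constants. The Young remainder is absorbed by $-\norm{D}^2$, not by any stepsize slack. The third inequality you mention ($\bar\theta$ tested at $\theta_{t+1}$) is not needed. Your telescoping and expectation step then completes the proof as in the paper.
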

The proof is provided in Appendix~\ref{app:proof_prop3}.
The first term in this bound vanishes as the number of iterations $T$ increases.
Therefore, controlling the discrepancy between the true and estimated gradients in the second term is the key to guaranteeing convergence.

We bound the summand of this second term primarily using \cref{item:identifiability}.
The proof is provided in Appendix~\ref{app:proof_prop4}.

\newcommand{\propBoundLOneLTwo}[1][align]{%
    Suppose that \cref{asm:main} holds.
    Let the perturbation radius $\delta$ be $b^{-1/6}$ with $b \geq b_{\min}$.
    Then, for any iteration $t$, the estimated gradient $\widehat{\nabla\mathcal{L}}(\theta_t)$ in \cref{alg:performative_prediction_1st_order} satisfies the following error bound:
    \begin{equation*}
        \mathbb{E}\left[
            \norm{\nabla\mathcal{L}(\theta_t) - \widehat{\nabla\mathcal{L}}(\theta_t)}^2
            \right]
        \leq{}   \frac{C_1}{b} + \frac{n C_2}{b^{2/3}},
    \end{equation*}
    where
    \ifthenelse{\equal{#1}{equation}}{%
        \begin{equation*}
            C_1 \coloneqq 3\qty(\qty(L_{\ell,\theta}^{\mathrm{Lip}})^2 I_{\max}
            + \qty(L_{\beta,\theta}^{\mathrm{Lip}}L_{G,\beta}^{\mathrm{Lip}})^2)\sigma_{\max}^2, \qquad
            C_2 \coloneqq
            3 M_\ell I_{\max}
            \qty(
            \frac{(L_{\beta,\theta}^{\mathrm{H}})^2}{12}
            +
            \frac{3\sigma_{\max}^2}{2}
            ).
        \end{equation*}
    }{%
        \begin{align*}
            C_1 & \coloneqq 3\qty(\qty(L_{\ell,\theta}^{\mathrm{Lip}})^2 I_{\max}
            + \qty(L_{\beta,\theta}^{\mathrm{Lip}}L_{G,\beta}^{\mathrm{Lip}})^2)\sigma_{\max}^2, \\
            C_2 & \coloneqq
            3 M_\ell I_{\max}
            \qty(
            \frac{(L_{\beta,\theta}^{\mathrm{H}})^2}{12}
            +
            \frac{3\sigma_{\max}^2}{2}
            ).
        \end{align*}
    }
}
\begin{proposition}\label{prop:bound_L1_L2}
    \propBoundLOneLTwo[equation]
\end{proposition}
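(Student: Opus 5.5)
The plan is to split the gradient error into three pieces, bound each using \cref{asm:main} through the Conditions of \cref{prop:technical_conditions}, and combine them with $\norm{a+b+c}^2 \le 3(\norm{a}^2+\norm{b}^2+\norm{c}^2)$. Abbreviate $\beta=\beta(\theta_t)$, $J=\pdv{\beta}{\theta}(\theta_t)$, and $\hat J=\widehat{\pdv{\beta}{\theta}}(\theta_t)$. Adding and subtracting $J^\top G(\theta_t;\hat\beta_t)$ gives
\begin{equation*}
\nabla\mathcal{L}(\theta_t)-\widehat{\nabla\mathcal{L}}(\theta_t)
= \underbrace{\nabla_1\mathcal{L}(\theta_t)-\widehat{\nabla_1\mathcal{L}}(\theta_t)}_{(\mathrm{i})}
+ \underbrace{J^\top\qty(G(\theta_t;\beta)-G(\theta_t;\hat\beta_t))}_{(\mathrm{ii})}
+ \underbrace{(J-\hat J)^\top G(\theta_t;\hat\beta_t)}_{(\mathrm{iii})}.
\end{equation*}
After squaring and applying the $3(\cdot)$ inequality, the factor $3$ appears in both $C_1$ and $C_2$.

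\emph{Term (i).} Write (i) as $F(\beta)-F(\hat\beta_t)$, where $F(\beta_0)=\mathbb{E}_{z\sim\mathcal{D}_{\beta_0}}[\nabla_\theta\ell(z;\theta_t)]$. Since $B$ is convex (\cref{item:B_compact_convex}), the segment from $\beta$ to $\hat\beta_t$ lies in $B$. Differentiation under the integral (\cref{item:regularity}) gives $\pdv{F}{\beta}(\beta_0)=\mathbb{E}[\nabla_\theta\ell\, s^\top]$. For a unit vector $u$, Cauchy--Schwarz yields $\norm{\mathbb{E}[\nabla_\theta\ell\,(s^\top u)]}\le \mathbb{E}[\norm{\nabla_\theta\ell}^2]^{1/2}\,(u^\top I u)^{1/2}\le L_{\ell,\theta}^{\mathrm{Lip}}\sqrt{I_{\max}}$, using \cref{asm:ell_theta,asm:Fisher}. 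The mean value inequality then gives $\norm{(\mathrm{i})}^2\le (L_{\ell,\theta}^{\mathrm{Lip}})^2 I_{\max}\norm{\hat\beta_t-\beta}^2$, and \cref{item:identifiability} bounds its expectation by $(L_{\ell,\theta}^{\mathrm{Lip}})^2 I_{\max}\sigma_{\max}^2/b$.

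\emph{Term (ii).} Lipschitzness of $\beta$ gives $\norm{J}\le L_{\beta,\theta}^{\mathrm{Lip}}$ (\cref{asm:beta_theta}), and \cref{asm:G_Lipschitz} with $\theta=\theta'$ gives $\norm{(\mathrm{ii})}\le L_{\beta,\theta}^{\mathrm{Lip}}L_{G,\beta}^{\mathrm{Lip}}\norm{\beta-\hat\beta_t}$. \Cref{item:identifiability} again yields the bound $(L_{\beta,\theta}^{\mathrm{Lip}}L_{G,\beta}^{\mathrm{Lip}})^2\sigma_{\max}^2/b$. Multiplying the bounds for (i) and (ii) by $3$ and adding them gives exactly $C_1/b$.

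\emph{Term (iii), the main obstacle.} I first bound $G$ uniformly. By the baseline form \cref{eq:G} and Cauchy--Schwarz, $\norm{G(\theta_t;\beta_0)}^2\le \mathrm{Var}[\ell]\,\Tr I(\beta_0)\le M_\ell I_{\max}$ for every $\beta_0\in B$, by \cref{asm:ell_bounded,asm:Fisher}. This bound is deterministic, so the randomness of $\hat\beta_t$ inside $G$ does not matter, and $\norm{(\mathrm{iii})}^2\le M_\ell I_{\max}\norm{J-\hat J}_F^2=M_\ell I_{\max}\sum_{i=1}^n\norm{\hat J_{:,i}-J_{:,i}}^2$. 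The sum over $n$ columns is the source of the factor $n$.

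For each column, I split the error into a deterministic bias and a stochastic part. The bias is $\frac{\beta(\theta_t+\delta e_i)-\beta(\theta_t-\delta e_i)}{2\delta}-\partial_i\beta(\theta_t)$. A third-order Taylor expansion with Hessian-Lipschitz remainder (\cref{asm:beta_theta}) bounds its norm by $L_{\beta,\theta}^{\mathrm{H}}\delta^2/6$, so its square is at most $(L_{\beta,\theta}^{\mathrm{H}})^2\delta^4/36$. The stochastic part is $\frac{(\hat\beta^+_{t,i}-\beta^+)-(\hat\beta^-_{t,i}-\beta^-)}{2\delta}$. Using \cref{item:identifiability} at the points $\theta_t\pm\delta e_i\in\Theta$ (guaranteed because $\theta_t\in\Theta_\delta$), its mean square is at most $\frac{2\cdot 2\sigma_{\max}^2/b}{4\delta^2}$. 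To reproduce the stated constants, I combine these two pieces again with a factor-$3$ split. This gives a per-column bound of $(L_{\beta,\theta}^{\mathrm{H}})^2\delta^4/12+3\sigma_{\max}^2/(2b\delta^2)$, which I then multiply by the outer factor $3M_\ell I_{\max}$.

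Finally, substituting $\delta=b^{-1/6}$ makes $\delta^4=b^{-2/3}$ and $1/(b\delta^2)=b^{-2/3}$. The bias and variance contributions are thus balanced, and the (iii) contribution becomes $nC_2/b^{2/3}$. Adding this to $C_1/b$ from terms (i) and (ii) proves the stated bound.
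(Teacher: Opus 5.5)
Your proposal is correct and is essentially the paper's proof. It uses the same add-and-subtract of $J^\top G(\theta_t;\hat\beta_t)$, the same uniform bound $\norm{G}\le\sqrt{M_\ell I_{\max}}$ from the baseline form, the column-wise Frobenius bound with Taylor bias $L_{\beta,\theta}^{\mathrm{H}}\delta^2/6$, and $\delta=b^{-1/6}$ to balance the terms. The differences are cosmetic: you bound term (i) via the derivative of $F$ and a mean-value inequality rather than the paper's fundamental-theorem-of-calculus/Tonelli argument on the density (\cref{lem:grad_ell_lip_beta}). Your loosely described ``factor-3 split'' is made precise in the paper by splitting each column error into bias, plus-noise, and minus-noise and applying $\norm{a+b+c}^2\le 3(\norm{a}^2+\norm{b}^2+\norm{c}^2)$ (equivalently, $\norm{x+y}^2\le 3\norm{x}^2+\tfrac{3}{2}\norm{y}^2$ on your two pieces), which gives exactly $(L_{\beta,\theta}^{\mathrm{H}})^2\delta^4/12+3\sigma_{\max}^2/(2b\delta^2)$.
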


\paragraph{Sample Complexity Analysis}

Combining \cref{prop:bound_gradient_norm,prop:bound_L1_L2}, we obtain the following convergence guarantee and sample complexity for \cref{alg:performative_prediction_1st_order}.
\begin{theorem}\label{thm:main}
    Suppose that \cref{asm:main} holds, and let $C_1$ and $C_2$ be defined as in \cref{prop:bound_L1_L2}.
    Consider \cref{alg:performative_prediction_1st_order} initialized at $\theta_0 \in \Theta_\delta$ with stepsize $0 < \alpha \leq 1/\Lsm$.
    For any $\epsilon > 0$, choose $T$ and $b$ as the smallest integers satisfying
    \begin{equation*}
        T \geq \frac{6\bigl(\mathcal{L}(\theta_0)-\mathcal{L}_*\bigr)} {\alpha\epsilon^2},
        \:
        b \geq \max\qty(
        b_{\min},
        \frac{3C_1}{\epsilon^2},
        \frac{(3 n C_2)^{1.5}}{\epsilon^3}
        ),
    \end{equation*}
    and set perturbation radius $\delta = b^{-1/6}$.
    Then we have
    \begin{equation*}
        \frac{1}{T} \sum_{t=0}^{T-1} \mathbb{E} \left[ \norm{\mathcal{G}_{\alpha}(\theta_t)}^2 \right] \leq \epsilon^2,
    \end{equation*}
    and the total sample complexity of the algorithm is
    \begin{equation*}
        T \cdot (2n+1) b = \order{n^{2.5} \epsilon^{-5}}.
    \end{equation*}
\end{theorem}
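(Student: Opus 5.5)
The plan is to combine \cref{prop:bound_gradient_norm} with \cref{prop:bound_L1_L2} and then choose $T$ and $b$ so that each of the three resulting error terms is at most $\epsilon^2/3$.

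\emph{Applicability of the earlier results.} Since $\theta_0 \in \Theta_\delta$ and every update projects onto $\Theta_\delta$, all iterates satisfy $\theta_t \in \Theta_\delta$. Hence every perturbed deployment $\theta_t \pm \delta e_i$ lies in $\Theta$, which is what \cref{prop:bound_L1_L2} requires. The stepsize condition $0<\alpha\le 1/\Lsm$ gives \cref{prop:bound_gradient_norm} directly. Because $\delta=b^{-1/6}$ and $b\ge b_{\min}$, \cref{prop:bound_L1_L2} holds at every iteration $t$. I do not expect any conditioning subtlety here. The bound in \cref{prop:bound_L1_L2} concerns the estimate at $\theta_t$ using fresh samples, so it holds conditionally on $\theta_t$. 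Taking total expectation then yields $\mathbb{E}\|\nabla\mathcal{L}(\theta_t)-\widehat{\nabla\mathcal{L}}(\theta_t)\|^2\le C_1/b+nC_2/b^{2/3}$ uniformly in $t$.

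\emph{Combining the bounds.} Substituting into \cref{prop:bound_gradient_norm} gives
\begin{equation*}
\frac{1}{T}\sum_{t=0}^{T-1}\mathbb{E}\|\mathcal{G}_\alpha(\theta_t)\|^2 \le \frac{2(\mathcal{L}(\theta_0)-\mathcal{L}_*)}{\alpha T}+\frac{C_1}{b}+\frac{nC_2}{b^{2/3}}.
\end{equation*}
The choice of $T$ makes the first term at most $\epsilon^2/3$. The condition $b\ge 3C_1/\epsilon^2$ makes the second term at most $\epsilon^2/3$. The condition $b\ge (3nC_2)^{1.5}/\epsilon^3$ gives $b^{2/3}\ge 3nC_2/\epsilon^2$, so the third term is also at most $\epsilon^2/3$. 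Summing the three terms gives the claimed bound $\epsilon^2$.

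\emph{Sample count.} Each iteration deploys $2n+1$ models, namely $\theta_t$ and $\theta_t\pm\delta e_i$ for $i=1,\dots,n$, with $b$ samples each, so the total is $T(2n+1)b$. Since $T$ and $b$ are the smallest integers satisfying their constraints, we have $T=\order{\epsilon^{-2}}$, treating $\alpha$ as a constant such as $1/\Lsm$. We also have
\begin{equation*}
b=\order{\max(\epsilon^{-2},\, n^{1.5}\epsilon^{-3})}=\order{n^{1.5}\epsilon^{-3}}
\end{equation*}
for small $\epsilon$, and the ceiling adds at most $1$. Therefore
\begin{equation*}
T(2n+1)b=\order{\epsilon^{-2}\cdot n\cdot n^{1.5}\epsilon^{-3}}=\order{n^{2.5}\epsilon^{-5}}.
\end{equation*}

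The only mildly delicate point is the handling of constants in the sample count. The constants $C_1$, $C_2$ and $\Lsm$ do not depend on $n$ beyond what is already written, and the dimension dependence is already isolated in the factor $n$ in front of $C_2$. We treat them as $\order{1}$ in $n$ and $\epsilon$.
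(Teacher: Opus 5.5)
Your proposal is correct and matches the paper's proof: you combine \cref{prop:bound_gradient_norm} with \cref{prop:bound_L1_L2} and use the stated choices of $T$ and $b$ (with $\delta=b^{-1/6}$) to make each of the three terms at most $\epsilon^2/3$. Your remarks on the iterates staying in $\Theta_\delta$, on conditioning on $\theta_t$, and on the sample-count arithmetic make explicit what the paper's two-line proof leaves implicit.
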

\begin{proof}
    By combining \cref{prop:bound_gradient_norm,prop:bound_L1_L2}, we have
    \begin{equation*}
        \frac{1}{T} \sum_{t=0}^{T-1} \mathbb{E} \left[\norm{\mathcal{G}_{\alpha}(\theta_t)}^2 \right]
        \leq
        \frac{2 \qty(\mathcal{L}(\theta_0)-\mathcal{L}_*)}{\alpha T}
        + \frac{C_1}{b}
        + \frac{n C_2}{b^{2/3}}.
    \end{equation*}
    When the conditions on $T$ and $b$ hold, each term on the right-hand side is bounded by $\epsilon^2/3$, concluding the proof.
\end{proof}

\begin{table*}[t]
    \centering
    \caption{Comparison of Methods.}
    \label{tab:methods}
    {\scriptsize
        \setlength{\tabcolsep}{2pt} 
        \renewcommand{\arraystretch}{0.95} 
        \begin{tabular}{
                C{0.215\linewidth}
                C{0.24\linewidth}
                C{0.135\linewidth}
                C{0.37\linewidth}
            }
            \toprule
            Method                                                                          & Estimation/Update                                                                               & Samples Per Iter. & Description                                                    \\
            \midrule
            \textbf{RRM} \scalebox{0.8}{\citep{perdomoPerformativePrediction2020}}          & $\theta_{t+1} \gets \argmin_{\theta}\sum_{j=1}^{b}\ell(z_{t,j};\theta)$                         & $b$               & Retrain without considering distribution shift                 \\
            \textbf{RGD} \scalebox{0.8}{\citep{perdomoPerformativePrediction2020}}          & $g_t \gets \widehat{\nabla_1 \mathcal{L}}(\theta_t)$                                            & $b$               & Ignore distribution shift                                      \\
            \textbf{Plug-in} \scalebox{0.8}{\citep{linPluginPerformativeOptimization2024b}} & $\hat{\theta} \gets \argmin_{\theta}\,\hat{\mathcal{L}}(\theta)$                                & pre-sampled       & Requires knowledge of $\beta(\theta)$                          \\
            \textbf{DFO} \scalebox{0.75}{\citep{hikimaZerothorderGradientEstimators2025}}   & $g_t \gets \frac{n}{\delta}\,\mathbb{E}_{u} \left[\mathcal{L}(\theta_t+\delta u)u\right]$       & $b$               & Coordinate-direction zeroth-order method                       \\
            \textbf{PerfGD} \scalebox{0.8}{\citep{izzoHowLearnWhen2021d}}                   & $g_t \gets \widehat{\nabla_1 \mathcal{L}}(\theta_t) + \widehat{\nabla_2 \mathcal{L}}(\theta_t)$ & $b$               & Estimate $\nabla_2 \mathcal{L}$ with pseudo-inverse of history \\
            \textbf{Proposed}                                                               & $g_t \gets \widehat{\nabla_1 \mathcal{L}}(\theta_t) + \widehat{\nabla_2 \mathcal{L}}(\theta_t)$ & $(2n+1)b$         & Estimate $\nabla_2 \mathcal{L}$ with coordinate perturbations  \\
            \textbf{Proposed (cyclic)}                                                      & $g_t \gets \widehat{\nabla_1 \mathcal{L}}(\theta_t) + \widehat{\nabla_2 \mathcal{L}}(\theta_t)$ &
            \makecell{\scalebox{0.9}{initially} $(2n+1)b$;                                                                                                                                                                                                                         \\ \scalebox{0.9}{thereafter} $b$ \scalebox{0.9}{or} $3b$}
                                                                                            & Cyclically refresh one column of $\widehat{\pdv{\beta}{\theta}}$                                                                                                                     \\
            \bottomrule
        \end{tabular}%
    }
\end{table*}

A slight modification of the assumptions, combined with smoothing on the sphere \citep{hikimaZerothorderGradientEstimators2025}, improves the sample complexity.
The formal statement and proof of \cref{prop:sphere_smoothing} are provided in Appendix~\ref{app:sphere_smoothing}.

\begin{proposition}[informal]\label{prop:sphere_smoothing}
    Under a slight strengthening of \cref{asm:main}, \cref{alg:performative_prediction_1st_order} with a sphere-smoothed random-direction estimator improves the total sample complexity from $\order{n^{2.5} \epsilon^{-5}}$ to $\order{n^2 \epsilon^{-5}}$.
\end{proposition}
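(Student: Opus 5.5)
The plan is to keep the architecture of \cref{thm:main}: \cref{prop:bound_gradient_norm} still applies verbatim, since it only uses $\Lsm$-smoothness (\cref{prop:PL_LSmooth}) and the projection structure. The only change is the Jacobian estimator, so everything reduces to a new version of \cref{prop:bound_L1_L2} with a better dependence on $n$ per sample. I would replace the $2n$ coordinate perturbations by $K$ random directions $u_k$ drawn uniformly from the unit sphere $S^{n-1}$. At each $u_k$ we deploy $\theta_t\pm\delta u_k$, estimate $\hat\beta^\pm_k$ by $\Pred$, and set
\[
\widehat{\nabla_2\mathcal{L}}(\theta_t)=\frac{n}{K}\sum_{k=1}^K \frac{\bigl(\hat\beta^+_k-\hat\beta^-_k\bigr)^\top G(\theta_t;\hat\beta_t)}{2\delta}\,u_k .
\]
Equivalently, this is a sphere-smoothed zeroth-order estimate of the gradient of the scalar function $\phi(\theta)\coloneqq\beta(\theta)^\top g$ with $g=G(\theta_t;\hat\beta_t)$ frozen. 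The ``slight strengthening'' I expect to need is that $\Theta_\delta$ is redefined with $\theta\pm\delta u\in\Theta$ for all unit $u$ (a ball-shrinkage), together with a uniform bound $\norm{G}\le\sqrt{M_\ell I_{\max}}$ and $\norm{\pdv{\beta}{\theta}}\le L^{\mathrm{Lip}}_{\beta,\theta}$ on the compact sets, both of which already follow from \cref{asm:main}.

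The error decomposes into three pieces, combined by $(a+b+c)^2\le 3(a^2+b^2+c^2)$.

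\textbf{(i)} The $\nabla_1$ error and the error from using $\hat\beta_t$ instead of $\beta(\theta_t)$ inside $G$. These are handled exactly as in \cref{prop:bound_L1_L2}, via \cref{asm:ell_theta,asm:G_Lipschitz,asm:Fisher} and \cref{item:identifiability}, and give $O(1/b)$ with no $n$.

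\textbf{(ii)} Bias. Conditional on $\hat\beta_t$, the expectation over $u$ of the noiseless estimator is $\nabla\phi_\delta(\theta_t)$, where $\phi_\delta$ is the ball-smoothing of $\phi$ (the standard identity from \citet{hikimaZerothorderGradientEstimators2025}). The centered difference cancels the even-order terms, so using the Hessian-Lipschitz constant $L^{\mathrm{H}}_{\beta,\theta}$ from \cref{asm:beta_theta} gives $\norm{\nabla\phi_\delta-\nabla\phi}\lesssim L^{\mathrm{H}}_{\beta,\theta}\norm{g}\delta^2$. This is dimension-free or at most mildly $n$-dependent after the sphere averaging.

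\textbf{(iii)} Variance. Two sources contribute. The direction randomness is bounded by $\frac{n}{K}\mathbb{E}[(u^\top\nabla\phi)^2]\cdot n\lesssim \frac{n}{K}\norm{\pdv{\beta}{\theta}}^2\norm{g}^2$, which needs no perturbation. The estimation noise is $\frac{n^2}{K}\mathbb{E}\norm{\hat\beta^+-\hat\beta^-}^2\norm{g}^2/(4\delta^2)\lesssim \frac{n^2\sigma_{\max}^2 M_\ell I_{\max}}{Kb\delta^2}$, using \cref{item:identifiability} and independence of samples across directions.

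The main obstacle is this last term. Its factor $n^2$ has to be beaten by averaging over $K$ directions, so I would choose $K=n$, giving $O(n/(b\delta^2))$. With $\delta=b^{-1/6}$ the bias-squared $\delta^4=b^{-2/3}$ balances against $n b^{-2/3}$, recovering the same error form $C_1'/b+nC_2'/b^{2/3}$ as \cref{prop:bound_L1_L2}. That alone would not improve the complexity, so the gain must come from the bias term being $n$-free. I would therefore tune $\delta$ separately, $\delta^4\asymp n/(b\delta^2)$, i.e.\ $\delta=(n/b)^{1/6}$. This gives error $\asymp (n/b)^{2/3}$, so we need $b\asymp n\epsilon^{-3}$. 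Multiplying by $T=O(\epsilon^{-2})$ and $(2K+1)=O(n)$ deployments per iteration gives total samples $O(n^2\epsilon^{-5})$.

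The delicate step I would verify carefully is the bias bound. It needs the third-order Taylor remainder, integrated against the uniform sphere measure, to give $\norm{\nabla\phi_\delta-\nabla\phi}\le c\,L^{\mathrm{H}}\norm{g}\delta^2$ with $c$ independent of $n$; sphere moments $\mathbb{E}[\norm{u}^3]$-type factors scaled by $n$ could otherwise reintroduce dimension. If that fails, I would fall back to assuming (as the ``strengthening'') a dimension-free bound on the smoothing bias.
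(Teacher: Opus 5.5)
You reduce the problem correctly to a scalar zeroth-order problem. With $g=G(\theta_t;\hat\beta_t)$ frozen, your estimator is exactly $\widehat{\pdv{\beta}{\theta}}(\theta_t)^\top g$ for the paper's sphere-smoothed Jacobian. Steps (i) and (ii) are fine: the ball-smoothing identity gives a dimension-free bias, matching the $3(L_{\beta,\theta}^{\mathrm{H}})^2\delta^4$ term in the bound the paper uses, so the step you flagged as delicate is not where the trouble is. Your $\delta=(n/b)^{1/6}$ and $b\asymp n\epsilon^{-3}$ also coincide with the paper's choices.

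\textbf{The gap is the choice $K=n$.} Write $J_t \coloneqq {\pdv{\beta}{\theta}}(\theta_t)$. The direction-sampling variance you wrote down in (iii) is $\frac{n-1}{K}\norm{J_t^\top g}^2$. For $K=n$ this is about $\norm{J_t^\top g}^2\approx\norm{\nabla_2\mathcal{L}(\theta_t)}^2$. It depends on neither $b$ nor $\delta$, and it does not vanish near stationary points, where only $\nabla_1\mathcal{L}+\nabla_2\mathcal{L}=0$ holds. So your error is not $\asymp(n/b)^{2/3}$. It has an $O(1)$ floor, and \cref{prop:bound_gradient_norm} then gives no $\epsilon$-stationarity. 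Pushing this term below $\epsilon^2$ requires $K\gtrsim n/\epsilon^2$ directions per iteration. The Taylor-remainder variance has a similar problem: the standard bound \citep[Lemma~8]{hikimaZerothorderGradientEstimators2025} gives it as order $n^2\delta^4/K$, which is dimension-free only once $K\gtrsim n^2$.

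\textbf{The missing idea is to decouple the number of directions from the samples per direction.} This is where the paper's actual strengthening, \cref{asm:sphere_smoothing}, enters.
\begin{itemize}
\item The paper takes $b_{\mathrm{sp}}=nb$ directions but only $b_{\min}$ samples per direction and sign. The per-iteration cost $(2nb_{\min}+1)b$ is then of the same order as yours, while the direction variance drops to $O(n/b)$. Your scalar formulation would even give $\norm{J_t^\top g}^2/b$ here.
\item Each $\hat\beta^{\pm}_i$ now has mean-squared error of order $\sigma_{\max}^2/b_{\min}=O(1)$. This noise averages out over directions only if the cross terms between different directions vanish.
\item Independence of the samples does not give that. \cref{item:identifiability} only controls the mean-squared error, so it permits a $\theta$-dependent bias of $\Pred$ of size up to $\sigma_{\max}/\sqrt{b_{\min}}$. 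Such a bias survives the averaging and can contribute a mean error of order $\sqrt{n}\,\sigma_{\max}\norm{g}/(\delta\sqrt{b_{\min}})$, whatever the number of directions.
\item The paper therefore assumes $\Pred$ is conditionally unbiased. The noise term $3\sigma_{\max}^2n^2/(\delta^2 b_{\mathrm{sp}}b_{\min})$ then depends only on the total directional sample count.
\item It also assumes componentwise variance and Hessian-Lipschitz constants, so that Lemma~8 can be applied to each coordinate of $\beta$, and nonemptiness of the sphere-shrunk feasible set. This last condition matches your ball-shrinkage.
\end{itemize}
Your list of strengthenings omits unbiasedness. Without it, the natural repair of your scheme is $K\asymp n/\epsilon^2$ directions with $b\asymp n\epsilon^{-3}$ samples each. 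That costs order $n^2\epsilon^{-5}$ samples per iteration and $\order{n^2\epsilon^{-7}}$ in total, not $\order{n^2\epsilon^{-5}}$.
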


\section{Variant of the Proposed Method}

Having established convergence guarantees for \cref{alg:performative_prediction_1st_order}, we discuss a practical variant that reduces the number of additional deployments: ``Proposed (cyclic)''.
This exploits the fact that the Jacobian often changes gradually during optimization.
Specifically, this variant method first estimates all columns of $\widehat{\pdv{\beta}{\theta}}$ using the same coordinate-wise finite-difference procedure as \cref{alg:performative_prediction_1st_order}, and stores the resulting matrix in memory.
Thereafter, for a prescribed update interval $K\in\mathbb{N}$, one column is refreshed every $K$ iterations in cyclic order, while the remaining columns are reused.
As a result, each iteration requires $b$ samples when no column is refreshed and $3b$ samples when one column is refreshed, which are reduced from the original $(2n+1)b$ samples.
Although this approach is heuristic in this paper, its analysis may benefit from techniques developed for cyclic block coordinate methods \citep{caiCyclicBlockCoordinate2023,beckConvergenceBlockCoordinate2013,nesterovEfficiencyCoordinateDescent2012}.

We also used Adam \citep{kingmaAdamMethodStochastic2017a}
as an adaptive-stepsize variant in Appendix~\ref{app:stepsize_schedule}.
We conducted experiments with this variant, which showed similar trends to the constant-stepsize results and improved performance in practice.

\section{Experiments}
\label{sec:experiments}

In this section, we present the experimental results of the proposed methods on various synthetic problems, comparing them with existing approaches.

\paragraph{Comparison of Methods}

The methods compared in our experiments are summarized in \cref{tab:methods} from the perspective of their update rules.
RRM and RGD are the originally studied algorithms \citep{perdomoPerformativePrediction2020}.
RRM retrains the model at iteration $t$ on the batch $\{z_{t,j}\}_{j=1}^b$ collected under the current model parameter $\theta_t$:
\begin{equation*}
    \theta_{t+1} \gets \argmin_{\theta \in \Theta}
    \sum_{j=1}^{b}\ell(z_{t,j};\theta).
\end{equation*}
RGD ignores the effect of distribution shift and uses only the direct gradient term:
\begin{equation*}
    \widehat{\nabla \mathcal{L}}(\theta_t) \gets \widehat{\nabla_1 \mathcal{L}}(\theta_t).
\end{equation*}
The plug-in method \citep{linPluginPerformativeOptimization2024b} first estimates how the data distribution depends on the deployed model using samples collected across models.
For all experiments, we fit the Gaussian surrogate $z \sim \mathcal{N}(M_0+M_1\theta,\sigma^2 I_d)$ by linear regression, which provides a simple and uniform implementation across problem settings.
It then minimizes the induced estimated performative loss:
\begin{equation*}
    \hat{\theta}
    \gets
    \argmin_{\theta \in \Theta}\hat{\mathcal{L}}(\theta),
    \;\;
    \hat{\mathcal{L}}(\theta)
    \gets
    \mathbb{E}_{z \sim \mathcal{N}(\hat M_0+\hat M_1\theta, \hat\sigma^2 I_d)}[\ell(z;\theta)].
\end{equation*}
DFO \citep{hikimaZerothorderGradientEstimators2025} estimates the performative gradient from function evaluations rather than first-order information.
For a signed coordinate direction $u\sim\mathrm{Unif}(\{\pm e_i\}_{i=1}^{n})$, it uses the zeroth-order estimator:
\begin{equation*}
    \widehat{\nabla \mathcal{L}}(\theta_t)
    \gets
    \frac{n}{\delta}
    \mathbb{E}_{u}\!\left[
        \mathcal{L}(\theta_t+\delta u)u
        \right].
\end{equation*}
PerfGD \citep{izzoHowLearnWhen2021d} estimates the distributional component of the performative gradient from historical information using a pseudo-inverse.

We report the constant-stepsize results in the main text.
We also conducted hyperparameter tuning for each method.
See Appendix~\ref{app:experimental_setup} for further details.

\begin{figure*}[t]
    \centering
    \begin{minipage}{0.75\textwidth}
        \begin{tabular}{@{}cc@{}}
            \includegraphics[width=0.48\linewidth]{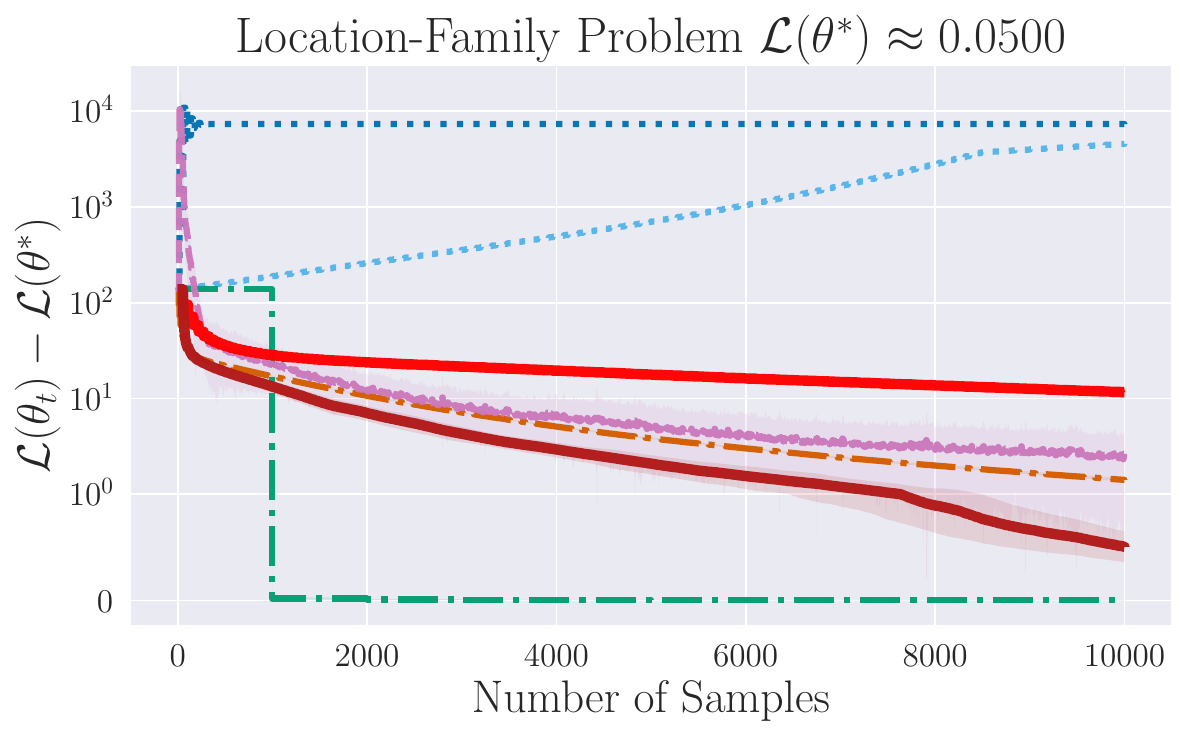}  &
            \includegraphics[width=0.48\linewidth]{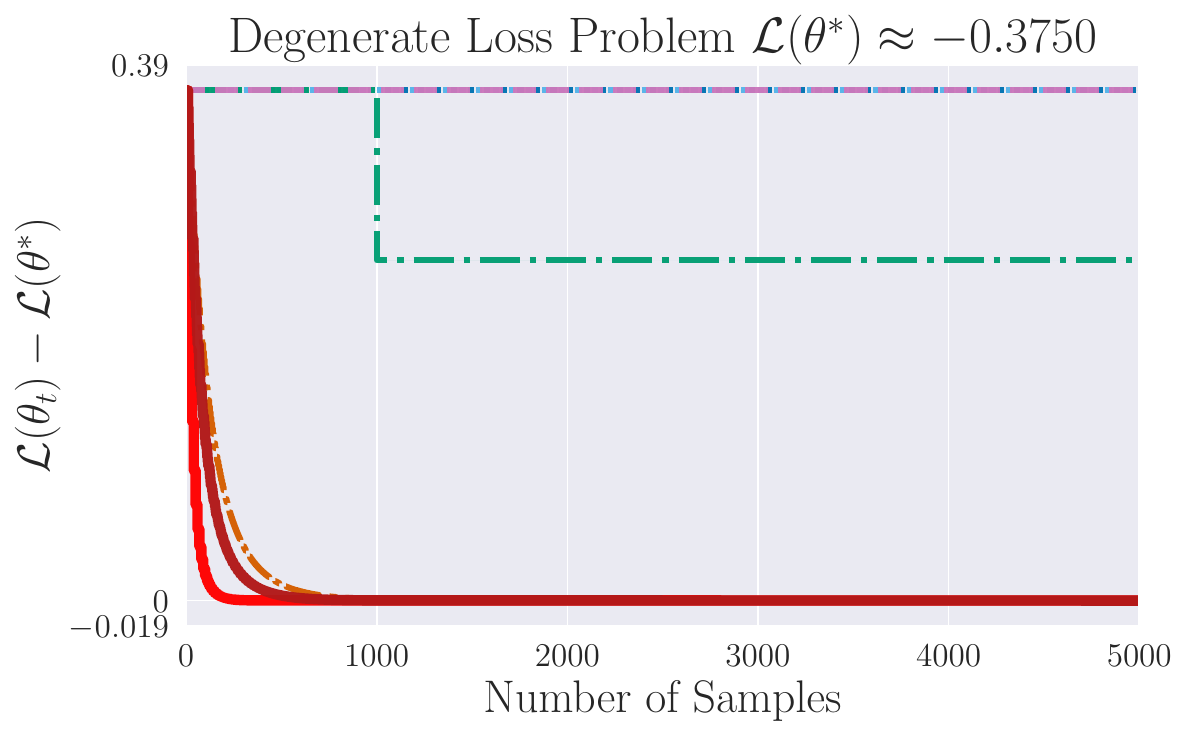} \\
            \includegraphics[width=0.48\linewidth]{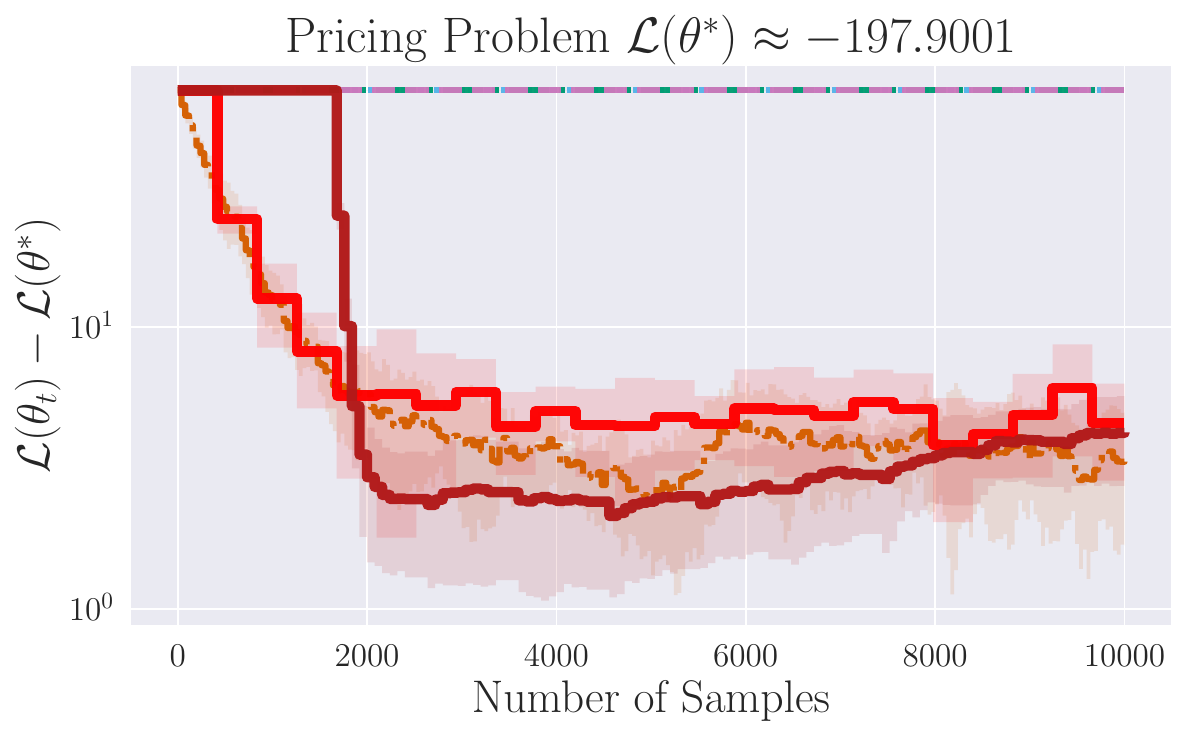} &
            \includegraphics[width=0.48\linewidth]{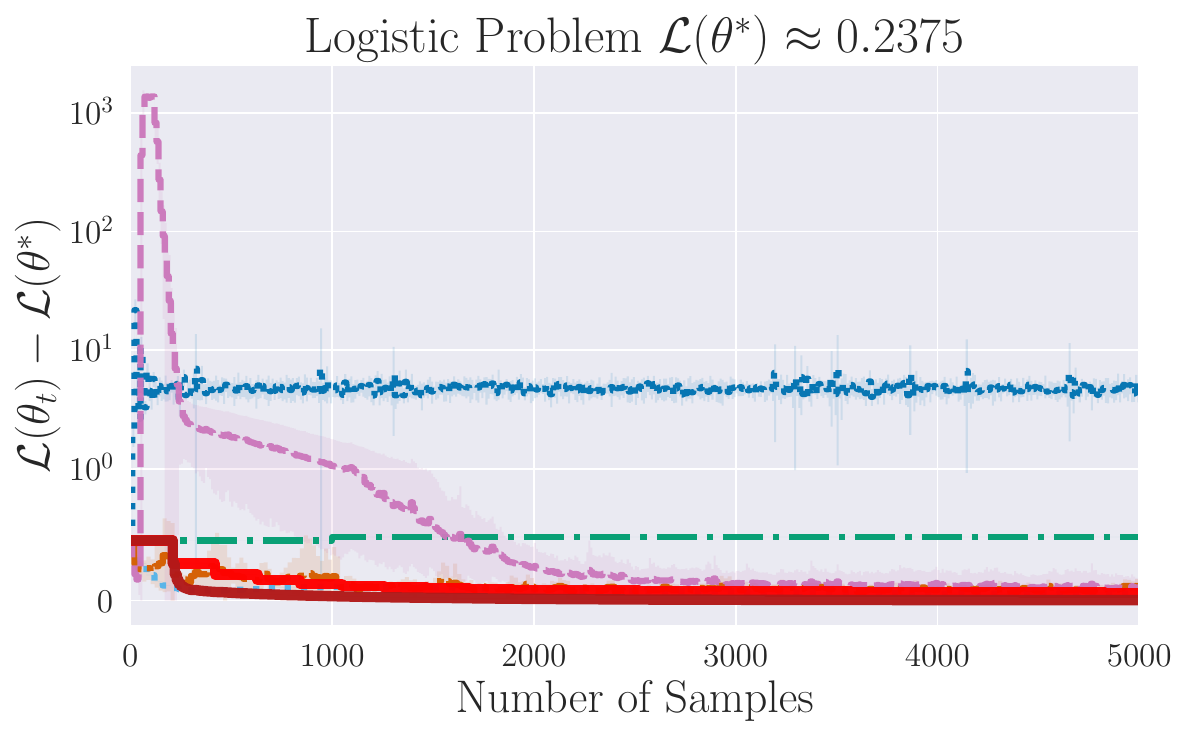}
        \end{tabular}
    \end{minipage}
    \hfill
    \begin{minipage}{0.2\textwidth}
        \includegraphics[width=\linewidth]{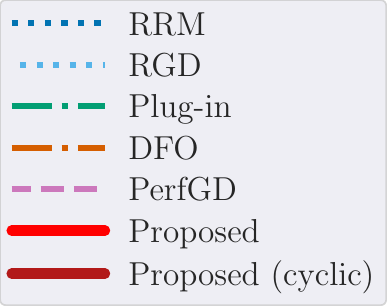}
    \end{minipage}
    \caption{Experimental results with constant stepsizes.
        For each problem, let $\theta^*$ denote an optimal or numerically best model parameter, and we evaluate each iterate using the excess performative risk $\mathcal{L}(\theta_t) - \mathcal{L}(\theta^*)$.
        Solid curves show the mean excess performative risk, and shaded regions indicate one standard deviation across 10 independent trials.
        The vertical axis uses a symmetric logarithmic scale, which is linear for values between $-1$ and $1$ and logarithmic outside this range.
        In the degenerate loss problem, the curves for RRM, RGD, and PerfGD nearly overlap near their initial objective values.
        Both proposed methods (red curves) exhibit stable convergence across all experiments.}
    \label{fig:experiments_result}
\end{figure*}

\paragraph{Experimental Setup}

We conducted experiments on several synthetic problem settings to evaluate the performance of the proposed methods.
The location-family problem is a simple Gaussian mean-shift model, $z=M_0+M_1\theta+\sigma\xi$ with $\xi\sim\mathcal{N}(0,I_d)$, in which the distribution parameter mapping $\beta$ is linear.
This is a standard setting in the literature \citep{perdomoPerformativePrediction2020,izzoHowLearnWhen2021d,linPluginPerformativeOptimization2024b}.
The degenerate loss problem is one in which the standard risk gradient $\nabla_1 \mathcal{L}(\theta) = \mathbb{E}[\nabla_\theta\ell(z;\theta)]$ can vanish even though the performative gradient $\nabla\mathcal{L}(\theta)$ is nonzero.
The pricing and logistic problems are more application-oriented settings in which the deployed decision changes demand or the positive-class feature distribution.
Details of the problem settings are provided in Appendix~\ref{app:problem_setup}.

\paragraph{Results}

We present the results in \cref{fig:experiments_result}.
The proposed methods perform favorably across all four settings, finding effective descent directions even when other methods fail.
See also the Wilcoxon signed-rank test results in Appendix~\ref{app:significance_tests}.

In the location-family problem, the distribution parameter mapping $\beta(\theta)$ is linear, and the problem is relatively simple.
The plug-in method can almost perfectly recover this mapping from sampled pairs $(\theta,z)$, and thus it is the fastest in this instance.
Since the proposed methods are designed for a broader class of problems, their convergence is relatively slow in this simple setting.
Still, both proposed methods converge, and Proposed (cyclic) is the second-fastest method.

In the degenerate loss problem, RRM, RGD, and PerfGD remain near their initial objective values, while the plug-in method improves the objective but then stalls due to the data distribution misspecification.
In contrast, both proposed methods and DFO converge to near-optimal objective values.
This behavior highlights one of the main contributions of our work from a practical perspective.
The proposed gradient-based methods extend the applicability of gradient-based performative optimization to these harder problem instances.
This can be understood directly from the structure of the problem.
At the initialization $\theta_0=(0,0)$, the standard risk gradient $\nabla_1 \mathcal{L}(\theta_0) = \mathbb{E}[\nabla_\theta\ell(z;\theta_0)]$ is zero, while the true performative gradient $\nabla\mathcal{L}(\theta_0)$ is nonzero.
PerfGD first relies on historical movement to estimate the distributional component, but if the initial loss-gradient update is zero, it does not generate informative nearby history and remains stuck.
By contrast, the proposed methods actively sample perturbed deployments around the current $\theta$, so they can estimate the missing distributional component and identify a direction that decreases the performative loss.

In the pricing and logistic problems, both Proposed and Proposed (cyclic) quickly approach the optimal solutions.
These stable and rapidly converging behaviors highlight the practicality of the proposed methods in these settings.

\section{Conclusion}

In this paper, we developed a gradient-based optimization framework for performative prediction.
The proposed methods have sample-complexity guarantees under broad classes of loss functions and distributions, and perform favorably across the numerical experiments.

Several directions remain for future work.
First, a natural direction is to extend our analysis to stateful settings \citep{izzoHowLearnWhen2022}.
Second, combining with direct estimation of the induced distribution shift may lead to more efficient algorithms.
The proposed framework may be inefficient in over-parameterized settings, where estimating $\beta(\theta)$ can require more samples than are necessary to characterize the relevant distribution shift.
Third, the methods rely on estimating the distribution parameters at $2n$ perturbed deployments, $\{\beta(\theta\pm\delta e_i)\}_{i=1}^n$.
As a result, gradient estimates can be sensitive to estimation error and numerical noise.
This issue is pronounced when the distribution shift is small, a regime outside the main focus of this paper.
Improving this behavior is another promising direction for future work.

\section{Acknowledgements}

This work was supported by JSPS KAKENHI Grant Numbers JP26KJ0936, JP23K28041.

\bibliographystyle{plainnat}
\bibliography{NTT2026.bib}


\newpage
\onecolumn
\appendix

\setcounter{proposition}{0}
\setcounter{theorem}{0}

\section{Details of the Assumptions}

In this section, we clarify the assumptions made in \cref{asm:main} and verify that there exist instances satisfying \cref{asm:main}.

\subsection{Exact Formulation of the Assumptions}
\label{app:exact_formulation_of_assumptions}

We first clarify the formal definition of $p$.
Let $\mathcal{Z} \subseteq \mathbb{R}^d$ be the data space, and let $\mathcal{A}$ be a $\sigma$-algebra on $\mathcal{Z}$.
Let $\{\mathcal{D}_{\beta_0}\}_{\beta_0 \in B}$ be dominated by a common $\sigma$-finite measure $\mu$ on $(\mathcal{Z}, \mathcal{A})$.
Then, we can define $p(z;\beta_0) = \dv{\mathcal{D}_{\beta_0}}{\mu}(z)$ as the Radon--Nikodym derivative.
This definition is compatible with \cref{item:regularity}.

We next state explicitly the polynomial-growth requirements in \cref{item:poly_score,item:poly_ell} and the moment requirement in \cref{item:light_tailed}.
We assume that there exist nonnegative finite constants $K_1, \dots, K_5$ and nonnegative integers $q_1, \ldots, q_5$ such that, for all $\theta \in \Theta$, $\beta_0 \in B$, and $z \in \mathbb{R}^d$,
\begin{align*}
    \norm{s(z;\beta_0)}
     & \le K_1 (1 + \norm{z}^{q_1}), \\
    \norm{\pdv{\beta} s(z;\beta_0)}
     & \le K_2 (1 + \norm{z}^{q_2}), \\
    \abs{\ell(z;\theta)}
     & \le K_3 (1 + \norm{z}^{q_3}), \\
    \norm{\nabla_{\theta}\ell(z;\theta)}
     & \le K_4 (1 + \norm{z}^{q_4}), \\
    \norm{\nabla^2_{\theta}\ell(z;\theta)}
     & \le K_5 (1 + \norm{z}^{q_5}).
\end{align*}
Let
\begin{equation}
    q_{\max} \coloneqq \max\{2 q_1 + q_3, q_2 + q_3, 2 q_3, 2 q_4, q_1 + q_4, q_5\}. \label{eq:q_max_definition}
\end{equation}
Then, \cref{item:light_tailed} implies there exist constants $\{M_k\}_{0 \le k \le q_{\max}}$ such that
\begin{equation}
    \sup_{\beta_0 \in B} \mathbb{E}_{z \sim \mathcal{D}_{\beta_0}} [\norm{z}^k] \le M_k < \infty. \label{eq:uniform_moment_bound}
\end{equation}
Note that $q_{\max}$ is large enough to control the following products of polynomial-growth quantities:
$\ell$,
$\ell^2$,
$\norm{\nabla_\theta \ell}^2$,
$\norm{s (\nabla_\theta \ell)^\top}$,
$\norm{\nabla^2_{\theta} \ell}$,
$\norm{\ell(s s^\top + \pdv{\beta} s)}$,
and $\norm{s}^2$.

\subsection{Examples Satisfying the Assumptions}
\label{app:examples_satisfying_assumptions}

We next discuss the standard examples satisfying \cref{asm:main}.

As for the loss function, \cref{item:poly_ell} is satisfied by many commonly used loss functions in finite-dimensional parametric models when $\Theta$ is compact (\cref{item:Theta_compact_convex}).
Examples include polynomial losses, logistic loss, and softmax cross-entropy loss.
With minor modifications to the analysis, the $C^2$ assumption of $\ell$ can be weakened to $C^1$ regularity together with a Lipschitz bound on $\nabla_\theta \ell(z;\theta)$ that grows at most polynomially in $\norm{z}$.
Consequently, non-$C^2$ losses such as smooth $L_1$ loss can be handled in essentially the same way.
See also \citet{tervenComprehensiveSurveyLoss2025,elharroussTaskbasedLossFunctions2025a}.

For the data distribution, its assumptions are likewise standard for regular parametric models.
We first check the moment condition stated as \cref{item:light_tailed}.
Families with uniformly sub-exponential tails have finite moments of all orders, and the corresponding moment bounds are controlled uniformly \citep[Section 2.7]{vershynin2026high}.
Consequently, many regular exponential-family models satisfy the required finite-order moment condition under the usual compact-parameter assumptions.
Certain heavy-tailed families also satisfy the required finite moment condition, provided their tail parameters are sufficiently favorable.
Explicitly, for the log-normal distribution and the Pareto distribution, the moments of order $k$ are
\begin{align*}
    (\text{Log-normal}) &  &                 &
    z \sim e^{\mathcal{N}(\mu,\sigma^2)}
                        &  & \mathbb{E}[z^k]
    =
    \exp\qty(k\mu+\frac{k^2\sigma^2}{2}),      \\[0.5em]
    (\text{Pareto})     &  &                 &
    z \sim \mathrm{Pareto}(x_m,\alpha)
                        &  & \mathbb{E}[z^k]
    =
    \frac{\alpha}{\alpha-k}x_m^k,
    \quad k<\alpha,
\end{align*}
which are finite under appropriate parameter choices.
See also \citep{halliwell2015lognormal}.
We next check the estimation condition stated as \cref{item:identifiability}.
As an example, consider the Gaussian location model $z \sim \mathcal{N}(\mu,\sigma^2)$ where $\sigma$ is known and positive, and $\beta_0=\mu$.
The model is identifiable, and the sample mean estimator
\begin{equation*}
    \Pred(\{z_i\}_{i=1}^b) = \widehat{\mu}_b \coloneqq \frac{1}{b} \sum_{i=1}^b z_i
\end{equation*}
satisfies
\begin{equation*}
    \mathbb{E}[\norm{\widehat{\mu}_b-\mu}^2] = \frac{\sigma^2}{b}
\end{equation*}
by the definition of the variance, so \cref{item:identifiability} holds with $\sigma_{\max} = \sigma$.
More generally, the asymptotic efficiency and convergence of the maximum likelihood estimator (MLE) in regular parametric models motivate the estimator condition in \cref{item:identifiability}.
We emphasize that the particular convergence rate, $\order{b^{-1}}$, is not necessary.
Even when the convergence rate is much slower, the convergence guarantee still holds.

The main exclusions are cases such as exponentially growing objectives, distributions with unbounded parameter derivatives, and combinations of losses and distributions for which the required expectations do not exist.

\section{Proof of Proposition 1 (Deriving Technical Conditions)}
\label{app:proof_prop1}

In this section, we prove \cref{prop:technical_conditions}, which states that \cref{asm:main} implies the technical conditions \cref{asm:PL_bounded,asm:ell_theta,asm:beta_theta,asm:G_Lipschitz,asm:ell_bounded,asm:Fisher}.

We first record a consequence of the uniform moment bound \cref{eq:uniform_moment_bound} in Appendix~\ref{app:exact_formulation_of_assumptions}.

\begin{lemma}
    \label{lem:uniform-polynomial-moments}
    Suppose \cref{asm:main} holds.
    Let $P\colon \mathbb{R} \to \mathbb{R}$ be a polynomial of degree at most $q_{\max}$.
    Then
    \begin{equation*}
        \sup_{\beta_0 \in B} \mathbb{E}_{z \sim \mathcal{D}_{\beta_0}} \qty[\abs{P(\norm{z})}] < \infty.
    \end{equation*}
\end{lemma}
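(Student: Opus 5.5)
Write $P(x) = \sum_{k=0}^{q} a_k x^k$ with $q \le q_{\max}$ and real coefficients $a_k$. The plan is to bound $\abs{P(\norm{z})}$ pointwise by a nonnegative combination of the monomials $\norm{z}^k$, integrate term by term, and then apply the uniform moment bound \cref{eq:uniform_moment_bound}.

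First, the triangle inequality gives, for every $z$,
\begin{equation*}
    \abs{P(\norm{z})} \le \sum_{k=0}^{q} \abs{a_k}\,\norm{z}^k .
\end{equation*}
The right-hand side is a nonnegative measurable function of $z$. For each fixed $\beta_0 \in B$, taking expectations under $\mathcal{D}_{\beta_0}$ and using linearity of the integral for nonnegative functions yields
\begin{equation*}
    \mathbb{E}_{z\sim\mathcal{D}_{\beta_0}}\qty[\abs{P(\norm{z})}] \le \sum_{k=0}^{q}\abs{a_k}\,\mathbb{E}_{z\sim\mathcal{D}_{\beta_0}}\qty[\norm{z}^k] \le \sum_{k=0}^{q}\abs{a_k}\,M_k .
\end{equation*}
The last step uses $k \le q \le q_{\max}$, so each $M_k$ is one of the constants provided by \cref{item:light_tailed} through \cref{eq:uniform_moment_bound}. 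The $k=0$ term is trivially $M_0 = 1$ because each $\mathcal{D}_{\beta_0}$ is a probability measure.

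The bound $\sum_{k}\abs{a_k}M_k$ does not depend on $\beta_0$. Taking the supremum over $\beta_0 \in B$ therefore gives a finite constant, which completes the proof.

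There is no real obstacle here. The only point needing care is that the moment bounds are assumed exactly up to order $q_{\max}$, which is why the degree restriction on $P$ matters. A second small point is that working with $\abs{a_k}$ avoids any cancellation issues, so finiteness holds without needing integrability of signed terms separately.
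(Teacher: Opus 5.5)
Your proof is correct and follows the paper's argument: you bound $\abs{P(\norm{z})}$ by $\sum_k \abs{a_k}\norm{z}^k$ with the triangle inequality, integrate term by term, and apply the uniform moment bounds $M_k$ from \cref{eq:uniform_moment_bound}. This gives a constant independent of $\beta_0$.
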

\begin{proof}
    Write $P(r) = \sum_{k = 0}^{q_{\max}} a_k r^k$.
    Then, by \cref{eq:uniform_moment_bound},
    \begin{equation*}
        \sup_{\beta_0 \in B} \mathbb{E}_{z \sim \mathcal{D}_{\beta_0}} \qty[\abs{P(\norm{z})}]
        \le
        \sum_{k = 0}^{q_{\max}} \abs{a_k} \sup_{\beta_0 \in B} \mathbb{E}_{z \sim \mathcal{D}_{\beta_0}} \qty[\norm{z}^k]
        \leq \sum_{k = 0}^{q_{\max}} \abs{a_k} M_k
        < \infty,
    \end{equation*}
    which concludes the proof.
\end{proof}

Using this lemma, we can verify the technical conditions.

\begin{proposition}[Restated]
    \propVerification
\end{proposition}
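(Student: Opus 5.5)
The plan is to verify the six conditions one at a time. Each one reduces to an expectation of a polynomial-growth quantity, and \cref{lem:uniform-polynomial-moments} bounds such expectations uniformly over $\beta_0\in B$; compactness of $\Theta$ and $B$ (\cref{item:Theta_compact_convex,item:B_compact_convex}) supplies the remaining uniformity. The exponents collected in $q_{\max}$ (\cref{eq:q_max_definition}) were chosen so that every product needed below has degree at most $q_{\max}$. I would handle the easy conditions first.

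\emph{The pointwise conditions.}
\begin{itemize}
  \item For \cref{asm:PL_bounded}, note that $\abs{\mathcal{L}(\theta)}\le \mathbb{E}_{z\sim\mathcal{D}_{\beta(\theta)}}[K_3(1+\norm{z}^{q_3})]$, which is bounded uniformly since $q_3\le q_{\max}$.
  \item For \cref{asm:ell_bounded}, write $\mathrm{Var}[\ell]\le\mathbb{E}[\ell^2]\le 2K_3^2\,\mathbb{E}[1+\norm{z}^{2q_3}]$ and use $2q_3\le q_{\max}$.
  \item For \cref{asm:Fisher}, use $\Tr I(\beta_0)=\mathbb{E}\norm{s}^2\le 2K_1^2\,\mathbb{E}[1+\norm{z}^{2q_1}]$. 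Here $2q_1\le q_{\max}$ is not literally listed, but it holds whenever $q_3\ge 0$ because $2q_1\le 2q_1+q_3$.
  \item For \cref{asm:beta_theta}, $\beta$ is $C^3$ on a neighbourhood of the compact convex set $\Theta$ (\cref{item:beta_C3}). Hence $\partial\beta/\partial\theta$, $\partial^2\beta/\partial\theta^2$ and the third derivative are bounded on $\Theta$, and the mean value inequality along segments, which stay in $\Theta$ by convexity, gives the three Lipschitz constants.
\end{itemize}

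\emph{\Cref{asm:ell_theta}.} The first bound follows from $\norm{\nabla_\theta\ell}^2\le 2K_4^2(1+\norm{z}^{2q_4})$ and $2q_4\le q_{\max}$. For smoothness, I would write $\nabla_\theta\ell(z;\theta)-\nabla_\theta\ell(z;\theta')=\int_0^1\nabla_\theta^2\ell(z;\theta'+\tau(\theta-\theta'))(\theta-\theta')\,\dd\tau$, which is valid by convexity of $\Theta$. Its norm is at most $K_5(1+\norm{z}^{q_5})\norm{\theta-\theta'}$, so taking expectations with $q_5\le q_{\max}$ gives the constant.

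\emph{\Cref{asm:G_Lipschitz}.} I expect this to be the main obstacle, since $G(\theta;\beta_0)=\int \ell(z;\theta)s(z;\beta_0)p(z;\beta_0)\,\dd\mu(z)$ depends on $\beta_0$ through both the score and the density. The approach is to bound the two partial derivatives uniformly and then use the triangle inequality through the intermediate point $(\theta';\beta_0)$, together with convexity of $\Theta$ and of $B$.
\begin{itemize}
  \item In $\theta$: $\nabla_\theta G=\mathbb{E}_{\beta_0}[s\,(\nabla_\theta\ell)^\top]$, whose norm is bounded by the expectation of a polynomial of degree $q_1+q_4\le q_{\max}$. Differentiating under the integral is justified by dominated convergence with the polynomial envelope.
  \item In $\beta$: $\pdv{\beta}G=\int \ell\,(\pdv{\beta}s\,p+s\,\nabla_\beta p^\top)\dd\mu=\mathbb{E}_{\beta_0}[\ell\,(\pdv{\beta}s+ss^\top)]$, again using the log-derivative trick. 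Differentiation under the integral sign is granted by \cref{item:regularity}. The norm is bounded by the expectation of a polynomial of degree $\max(q_2+q_3,\,2q_1+q_3)\le q_{\max}$, uniformly in $\beta_0$.
\end{itemize}
With both derivatives bounded, the mean value inequality in each argument gives the two constants $L^{\mathrm{Lip}}_{G,\theta}$ and $L^{\mathrm{Lip}}_{G,\beta}$. The delicate point is justifying interchange of derivative and integral uniformly over $B$. I would either invoke \cref{item:regularity} directly or dominate by the polynomial envelope times $p$, and then appeal to the uniform moment bound \cref{eq:uniform_moment_bound}.
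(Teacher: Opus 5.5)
Your proposal is correct and follows essentially the same route as the paper. Each condition is reduced to a uniform polynomial-moment bound via \cref{lem:uniform-polynomial-moments}, with the degrees checked against $q_{\max}$, and \cref{asm:G_Lipschitz} follows from uniform bounds on $\pdv{\theta}G$ and $\pdv{\beta}G$ obtained by differentiating under the integral. The differences are cosmetic: you split the Lipschitz estimate for $G$ through the intermediate point $(\theta';\beta_0)$ rather than applying the mean-value theorem jointly on $\Theta\times B$, and you state explicitly that $2q_1\le 2q_1+q_3\le q_{\max}$.
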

\begin{proof}
    We verify the required technical conditions one by one.
    First, we prove \cref{asm:PL_bounded}.
    By \cref{item:poly_ell} and \cref{lem:uniform-polynomial-moments},
    \begin{equation*}
        \sup_{\theta \in \Theta,\ \beta_0 \in B}
        \mathbb{E}_{z \sim \mathcal{D}_{\beta_0}} [\abs{\ell(z;\theta)}]
        < \infty.
    \end{equation*}
    Since $\beta(\Theta) \subseteq B$ by \cref{item:B_compact_convex}, this uniform bound gives
    \begin{equation*}
        \mathcal{L}_* = \inf_{\theta \in \Theta} \mathcal{L}(\theta)
        = \inf_{\theta \in \Theta} \mathbb{E}_{z \sim \mathcal{D}_{\beta(\theta)}} [\ell(z;\theta)]
        \ge \inf_{\theta \in \Theta,\ \beta_0 \in B} \mathbb{E}_{z \sim \mathcal{D}_{\beta_0}} [\ell(z;\theta)]
        \ge -\qty(\sup_{\theta \in \Theta,\ \beta_0 \in B} \mathbb{E}_{z \sim \mathcal{D}_{\beta_0}} [\abs{\ell(z;\theta)}])
        > -\infty.
    \end{equation*}
    Thus \cref{asm:PL_bounded} holds.

    Next, we prove \cref{asm:ell_theta}.
    Define
    \begin{align*}
        L_{\ell,\theta}^{\mathrm{Lip}}
         & \coloneqq
        \sup_{\theta\in\Theta,\ \beta_0\in B}
        \left(
        \mathbb{E}_{z\sim\mathcal{D}_{\beta_0}}
        \left[
            \norm{\nabla_\theta\ell(z;\theta)}^2
            \right]
        \right)^{1/2}, \\
        L_{\ell,\theta}^{\mathrm{sm}}
         & \coloneqq
        \sup_{\theta\in\Theta,\ \beta_0\in B}
        \mathbb{E}_{z\sim\mathcal{D}_{\beta_0}}
        \left[
            \norm{\nabla_\theta^2\ell(z;\theta)}
            \right].
    \end{align*}
    By \cref{item:poly_ell}, the two integrands are uniformly bounded over $\theta \in \Theta$ by polynomials in $\norm{z}$ of degrees at most $2 q_4$ and $q_5$, respectively.
    Since $\max\{2 q_4, q_5\} \leq q_{\max}$ by \cref{eq:q_max_definition}, both quantities are finite by \cref{lem:uniform-polynomial-moments}.
    Given the convexity of $\Theta$ (\cref{item:Theta_compact_convex}), the mean-value theorem gives the Lipschitz and smoothness bounds required in \cref{asm:ell_theta} \citep[Proposition 2.2.1]{cobzasLipschitzFunctions2019}.

    Next, we prove \cref{asm:beta_theta}.
    Since $\beta$ is $C^3$ and $\Theta$ is compact, the following quantities are finite:
    \begin{equation*}
        L_{\beta,\theta}^{\mathrm{Lip}}
        \coloneqq
        \sup_{\theta\in\Theta}
        \norm{
            {\pdv{\beta}{\theta}}(\theta)
        }, \qquad
        L_{\beta,\theta}^{\mathrm{sm}}
        \coloneqq
        \sup_{\theta\in\Theta}
        \norm{
            {\pdv[2]{\beta}{\theta}}(\theta)
        }, \qquad
        L_{\beta,\theta}^{\mathrm{H}}
        \coloneqq
        \sup_{\theta\in\Theta}
        \norm{
            {\pdv[3]{\beta}{\theta}}(\theta)
        }.
    \end{equation*}
    By the mean-value theorem and the convexity of $\Theta$, these bounds imply \cref{asm:beta_theta}.

    Next, we prove \cref{asm:G_Lipschitz}.
    Recall from \cref{eq:score_function,eq:G_definition} that
    \begin{equation*}
        s(z;\beta_0)
        =
        \nabla_{\beta} \ln p(z;\beta_0),
        \qquad
        G(\theta;\beta_0)
        =
        \mathbb{E}_{z\sim\mathcal{D}_{\beta_0}}
        \left[
            \ell(z;\theta)s(z;\beta_0)
            \right].
    \end{equation*}
    By \cref{item:poly_score,item:poly_ell}, the norm of $s(z;\beta_0)\nabla_\theta\ell(z;\theta)^\top$ is bounded uniformly over $\theta \in \Theta$ by a polynomial in $\norm{z}$ of degree at most $q_1+q_4$.
    Since $q_1+q_4 \leq q_{\max}$ by \cref{eq:q_max_definition}, this polynomial is integrable under every $\mathcal{D}_{\beta_0}$ by \cref{lem:uniform-polynomial-moments}.
    Thus, the dominated convergence theorem justifies differentiation under the integral sign with respect to $\theta$, and
    \begin{equation}
        \pdv{\theta} G(\theta;\beta_0)
        =
        \mathbb{E}_{z\sim\mathcal{D}_{\beta_0}}
        \left[
            s(z;\beta_0) \nabla_\theta\ell(z;\theta)^\top
            \right].
        \label{eq:G_theta_derivative}
    \end{equation}
    By \cref{item:regularity}, differentiation under the integral sign with respect to $\beta_0$ is valid, so
    \begin{align}
        \pdv{\beta} G(\theta;\beta_0) & = \pdv{\beta} \int \ell(z;\theta)s(z;\beta_0)p(z;\beta_0) \dd{\mu(z)}                                                                              &  & (\text{by \cref{eq:G_definition}})\notag                          \\
                                      & = \int \ell(z;\theta) \left\{ \pdv{\beta} s(z;\beta_0) + s(z;\beta_0)s(z;\beta_0)^\top \right\} p(z;\beta_0) \dd{\mu(z)}                           &  & (\nabla_{\beta} p(z;\beta_0)=p(z;\beta_0)s(z;\beta_0))     \notag \\
                                      & = \mathbb{E}_{z\sim\mathcal{D}_{\beta_0}} \left[ \ell(z;\theta) \left\{ \pdv{\beta} s(z;\beta_0) + s(z;\beta_0)s(z;\beta_0)^\top \right\} \right].
        \label{eq:G_beta_derivative}
    \end{align}
    By \cref{item:poly_score,item:poly_ell}, the norms of the integrands in \cref{eq:G_theta_derivative,eq:G_beta_derivative} are uniformly bounded over $(\theta,\beta_0) \in \Theta \times B$ by polynomials in $\norm{z}$ of degrees at most $q_1 + q_4$ and $\max\{q_2 + q_3, 2 q_1 + q_3\}$, respectively.
    Since both degrees are at most $q_{\max}$ by \cref{eq:q_max_definition}, \cref{lem:uniform-polynomial-moments} gives
    \begin{equation*}
        \sup_{\theta\in\Theta,\ \beta_0\in B} \norm{\pdv{\theta} G(\theta;\beta_0)} < \infty,
        \qquad
        \sup_{\theta\in\Theta,\ \beta_0\in B} \norm{\pdv{\beta} G(\theta;\beta_0)} < \infty.
    \end{equation*}
    Since $\Theta\times B$ is convex, the mean-value theorem implies that $G$ is Lipschitz on $\Theta\times B$.
    This proves \cref{asm:G_Lipschitz}.

    Next, we prove \cref{asm:ell_bounded}.
    By \cref{item:poly_ell}, $\abs{\ell(z;\theta)}$ and $\ell(z;\theta)^2$ are uniformly bounded over $\theta \in \Theta$ by polynomials in $\norm{z}$ of degrees at most $q_3$ and $2 q_3$, respectively.
    Since $2 q_3 \leq q_{\max}$ by \cref{eq:q_max_definition}, \cref{lem:uniform-polynomial-moments} gives
    \begin{equation*}
        M_\ell
        \coloneqq
        \sup_{\theta\in\Theta,\ \beta_0\in B}
        \mathrm{Var}_{z \sim \mathcal{D}_{\beta_0}} \left[ \ell(z; \theta) \right]
        =
        \sup_{\theta\in\Theta,\ \beta_0\in B}
        \qty(\mathbb{E}_{z\sim\mathcal{D}_{\beta_0}}
        \left[
            \ell(z;\theta)^2
            \right]
        - \mathbb{E}_{z\sim\mathcal{D}_{\beta_0}}
        \left[
            \ell(z;\theta)
            \right]^2)
        < \infty.
    \end{equation*}
    Thus \cref{asm:ell_bounded} holds.

    Finally, we prove \cref{asm:Fisher}.
    The trace of the Fisher information matrix defined in \cref{eq:Fisher_information} satisfies
    \begin{equation}
        \Tr(I(\beta_0))
        =
        \Tr(\mathbb{E}_{z\sim\mathcal{D}_{\beta_0}}
        \left[
            s(z;\beta_0)s(z;\beta_0)^\top
            \right])
        = \mathbb{E}_{z\sim\mathcal{D}_{\beta_0}}
        \left[\norm{s(z;\beta_0)}^2\right]. \label{eq:Fisher_trace}
    \end{equation}
    By \cref{item:poly_score}, $\norm{s(z;\beta_0)}^2$ has at most polynomial growth of degree at most $2 q_1$.
    Since $2 q_1 \leq q_{\max}$ by \cref{eq:q_max_definition}, \cref{lem:uniform-polynomial-moments,eq:Fisher_trace} gives
    \begin{equation*}
        I_{\max}
        \coloneqq
        \sup_{\beta_0\in B}
        \Tr(I(\beta_0))
        =
        \sup_{\beta_0\in B}
        \mathbb{E}_{z\sim\mathcal{D}_{\beta_0}}
        \left[
            \norm{s(z;\beta_0)}^2
            \right]
        < \infty.
    \end{equation*}
    Thus \cref{asm:Fisher} holds.
    This completes the verification.
\end{proof}

\section{Proof of Proposition 2 (Smoothness of the Performative Loss)}
\label{app:proof_prop2}

In this section, we prove \cref{prop:PL_LSmooth}, which states the sufficient conditions for the $\Lsm$-smoothness of the performative loss $\mathcal{L}$.
We start by decomposing the gradient difference into two terms:
\begin{align*}
    \norm{\nabla \mathcal{L}(\theta) - \nabla \mathcal{L}(\theta')}
    &= \norm{
        (\nabla_1\mathcal{L}(\theta) + \nabla_2\mathcal{L}(\theta)) - (\nabla_1\mathcal{L}(\theta') + \nabla_2\mathcal{L}(\theta'))
    } \\
    &\leq \norm{\nabla_1\mathcal{L}(\theta) - \nabla_1\mathcal{L}(\theta')} + \norm{\nabla_2\mathcal{L}(\theta) - \nabla_2\mathcal{L}(\theta')}.
\end{align*}
We will bound each term separately.
To this end, we start by deriving some useful lemmas for the subsequent analysis.
Differentiation under the integral sign with respect to $\beta_0$ is justified by \cref{item:regularity}, and the exchanges of integration order below are justified by Tonelli's theorem for nonnegative integrands.

\begin{lemma}\label{lem:grad_ell_lip_beta}
    Suppose that \cref{asm:main} holds.
    For any $\theta \in \Theta$ and $\beta_0 = \beta(\theta), \beta_1 \in B$, we have
    \begin{equation*}
        \norm{
            \mathbb{E}_{z \sim \mathcal{D}_{\beta_1}}\qty[\nabla_{\theta} \ell(z; \theta)]
            -
            \mathbb{E}_{z \sim \mathcal{D}_{\beta_0}}\qty[\nabla_{\theta} \ell(z; \theta)]
        }
        \le
        L_{\ell,\theta}^{\mathrm{Lip}} \sqrt{I_{\max}} \norm{\beta_1 - \beta_0}.
    \end{equation*}
\end{lemma}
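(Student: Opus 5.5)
Fix $\theta\in\Theta$ and write $g(z)\coloneqq\nabla_\theta\ell(z;\theta)$. The plan is to express the difference of expectations as an integral along the segment from $\beta_0$ to $\beta_1$, then apply Cauchy--Schwarz with the score function. Since $B$ is convex (\cref{item:B_compact_convex}), the path $\beta_\tau\coloneqq\beta_0+\tau(\beta_1-\beta_0)$ stays in $B$ for $\tau\in[0,1]$. Define $h(\tau)\coloneqq\mathbb{E}_{z\sim\mathcal{D}_{\beta_\tau}}[g(z)]=\int g(z)\,p(z;\beta_\tau)\dd{\mu(z)}$. By \cref{item:regularity}, we may differentiate under the integral sign, and the log-derivative trick gives
\begin{equation*}
    h'(\tau)=\int g(z)\,s(z;\beta_\tau)^\top(\beta_1-\beta_0)\,p(z;\beta_\tau)\dd{\mu(z)}
    =\mathbb{E}_{z\sim\mathcal{D}_{\beta_\tau}}\qty[g(z)\,s(z;\beta_\tau)^\top](\beta_1-\beta_0).
\end{equation*}
This step is justified by \cref{item:regularity}. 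Integrability of $\norm{g\,s^\top}$ follows from the polynomial bound of degree $q_1+q_4\le q_{\max}$ and \cref{lem:uniform-polynomial-moments}, exactly as in the proof of \cref{asm:G_Lipschitz}. The fundamental theorem of calculus (each component of $h$ is $C^1$ in $\tau$) then yields $h(1)-h(0)=\int_0^1 h'(\tau)\dd{\tau}$.

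To bound the integrand, fix a unit vector $v\in\mathbb{R}^n$ and set $u=\beta_1-\beta_0$. Cauchy--Schwarz in $L^2(\mathcal{D}_{\beta_\tau})$ gives
\begin{equation*}
    \abs{v^\top h'(\tau)}=\abs{\mathbb{E}\qty[(v^\top g)(s^\top u)]}\le\mathbb{E}\qty[(v^\top g)^2]^{1/2}\,\mathbb{E}\qty[(s^\top u)^2]^{1/2}.
\end{equation*}
The first factor is at most $\mathbb{E}[\norm{g}^2]^{1/2}\le L_{\ell,\theta}^{\mathrm{Lip}}$, by the supremum over $\beta\in B$ in \cref{asm:ell_theta}. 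The second factor equals $(u^\top I(\beta_\tau)u)^{1/2}$. Because $I(\beta_\tau)$ is positive semidefinite, $u^\top I u\le\lambda_{\max}(I)\norm{u}^2\le\Tr(I)\norm{u}^2\le I_{\max}\norm{u}^2$ by \cref{asm:Fisher}. Taking the supremum over unit $v$ gives $\norm{h'(\tau)}\le L_{\ell,\theta}^{\mathrm{Lip}}\sqrt{I_{\max}}\norm{\beta_1-\beta_0}$. Integrating over $\tau\in[0,1]$ and applying the triangle inequality for Bochner integrals completes the proof.

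The main obstacle is rigor in the differentiation and integration step: we need $h$ to be differentiable with a continuous, integrable derivative along the path. I expect \cref{item:regularity}, together with the uniform polynomial moment bounds, to settle this. The rest is direct. The hypothesis $\beta_0=\beta(\theta)$ plays no real role, since all bounds are uniform over $B$.
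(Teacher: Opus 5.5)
Your proof is correct and shares the paper's skeleton: interpolate along $\beta_\tau$, express the variation through the score, and apply Cauchy--Schwarz against the Fisher information. The technical execution differs in two instructive ways.

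First, the paper applies the fundamental theorem of calculus pointwise in $z$ to the density, $p(z;\beta_1)-p(z;\beta_0)=\int_0^1\nabla_\beta p(z;\beta_t)^\top(\beta_1-\beta_0)\,\mathrm{d}t$. It then moves the norm inside and swaps the $t$- and $z$-integrals by Tonelli. It therefore uses only pointwise smoothness of $p(z;\cdot)$ and never differentiates an expectation. Your route instead needs the interchange clause of \cref{item:regularity} to differentiate $h(\tau)$.

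Second, in exchange, your directional Cauchy--Schwarz is sharper. The paper bounds $\abs{\nabla_\beta p^\top u}\le\norm{\nabla_\beta p}\norm{u}$ and ends with $\mathbb{E}\norm{s}^2=\Tr(I)$. You get $u^\top I(\beta_\tau)u\le\lambda_{\max}(I(\beta_\tau))\norm{u}^2$, so your argument actually proves the lemma with $\sup_{\beta_0\in B}\lambda_{\max}(I(\beta_0))$ in place of $I_{\max}$. This can be smaller by a factor of up to $m$. The refinement would work equally well inside the paper's pointwise argument, by keeping $\abs{s^\top u}$ instead of $\norm{s}\norm{u}$.

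Finally, you do not need $h$ to be $C^1$. Your bound on $\norm{h'(\tau)}$ is uniform in $\tau$, so the mean value inequality applied to $v^\top h$, with $v$ the unit vector along $h(1)-h(0)$, already yields the claim.
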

\begin{proof}
    Since $B$ is convex (\cref{item:B_compact_convex}), $\beta_t \coloneqq \beta_0 + t(\beta_1 - \beta_0)$ is in $B$ for $t\in[0,1]$, and we have
    \begin{align}
               & \norm{
            \mathbb{E}_{z \sim \mathcal{D}_{\beta_1}}\qty[\nabla_{\theta} \ell(z; \theta)]
            -
            \mathbb{E}_{z \sim \mathcal{D}_{\beta_0}}\qty[\nabla_{\theta} \ell(z; \theta)]
        }                                                                                                                                                                                                                         \notag \\
        ={}    &
        \norm{
            \int
            \nabla_{\theta} \ell(z; \theta)
            \qty(p(z;\beta_1)-p(z;\beta_0))
            \dd{\mu(z)}
        }      &                                                                                                                                          &
        (\text{definitions})
        \notag                                                                                                                                                                                                                           \\
        ={}    & \norm{\int \nabla_{\theta} \ell(z; \theta) \int_0^1 \nabla_{\beta} p(z;\beta_t)^\top (\beta_1-\beta_0) \dd{t} \dd{\mu(z)}}               &   & (\text{fundamental theorem of calculus}) \notag                          \\
        \leq{} & \int \norm{\nabla_{\theta} \ell(z; \theta)} \int_0^1 \norm{\nabla_{\beta} p(z;\beta_t)} \norm{\beta_1-\beta_0} \dd{t} \dd{\mu(z)}        &   & (\text{triangle inequality})           \notag                            \\
        ={}    & \qty(\int \norm{\nabla_{\theta} \ell(z; \theta)} \int_0^1 \norm{\nabla_{\beta} p(z;\beta_t)} \dd{t} \dd{\mu(z)}) \norm{\beta_1-\beta_0}. &   & (\text{constancy of $\beta_1-\beta_0$}) \label{eq:grad_ell_lip_beta}
    \end{align}
    For the integral part, we have
    \begin{align}
               & \int \norm{\nabla_{\theta} \ell(z; \theta)} \int_0^1 \norm{\nabla_{\beta} p(z;\beta_t)} \dd{t} \dd{\mu(z)}                                                                                                                                                         \notag                                                                                   \\
        ={}    & \int_0^1 \int \norm{\nabla_{\theta} \ell(z; \theta)} \norm{\nabla_{\beta} p(z;\beta_t)} \dd{\mu(z)} \dd{t}                                                                                                                                                                &  & (\text{Tonelli's theorem})                         \notag                    \\
        ={}    & \int_0^1 \int \norm{\nabla_{\theta} \ell(z; \theta)} \norm{\nabla_{\beta} \ln p(z;\beta_t)} p(z;\beta_t) \dd{\mu(z)} \dd{t}                                                                                                                                               &  & (\text{log-derivative})                           \notag                     \\
        ={}    & \int_0^1 \mathbb{E}_{z \sim \mathcal{D}_{\beta_t}} \left[ \norm{\nabla_{\theta} \ell(z; \theta)} \norm{\nabla_{\beta} \ln p(z;\beta_t)} \right] \dd{t}                                                                                                                    &  & (\text{definition of expectation})           \notag                          \\
        \leq{} & \int_0^1 \mathbb{E}_{z \sim \mathcal{D}_{\beta_t}} \left[\norm{\nabla_{\theta} \ell(z; \theta)}^2\right]^{1/2} \mathbb{E}_{z \sim \mathcal{D}_{\beta_t}} \left[ \norm{\nabla_{\beta} \ln p(z;\beta_t)}^2\right]^{1/2} \dd{t}                                              &  & (\text{Cauchy--Schwarz inequality})            \notag                        \\
        \leq{} & \int_0^1 L_{\ell,\theta}^{\mathrm{Lip}} \sqrt{\Tr(I(\beta_t))} \dd{t}.                                                                                                                                                                                                    &  & (\text{\cref{asm:ell_theta,eq:Fisher_trace}}) \label{eq:integral_part_bound}
    \end{align}
    Thus, we have
    \begin{align*}
        \norm{
            \mathbb{E}_{z \sim \mathcal{D}_{\beta_1}}\qty[\nabla_{\theta} \ell(z; \theta)]
            -
            \mathbb{E}_{z \sim \mathcal{D}_{\beta_0}}\qty[\nabla_{\theta} \ell(z; \theta)]
        }
         & \leq \qty(\int_0^1 L_{\ell,\theta}^{\mathrm{Lip}} \sqrt{\Tr(I(\beta_t))} \dd{t}) \norm{\beta_1 - \beta_0} &  & (\text{by \cref{eq:grad_ell_lip_beta,eq:integral_part_bound}}) \\
         & \leq L_{\ell,\theta}^{\mathrm{Lip}} \sqrt{I_{\max}} \norm{\beta_1 - \beta_0},                             &  & (\text{\cref{asm:Fisher}})
    \end{align*}
    which concludes the proof.
\end{proof}

\begin{lemma}\label{lem:G_norm_bound}
    Suppose that \cref{asm:main} holds.
    Then, for all $\theta \in \Theta$ and $\beta_0 \in B$, we have
    \begin{equation*}
        \norm{G(\theta; \beta_0)} \leq \sqrt{M_\ell I_{\max}}.
    \end{equation*}
\end{lemma}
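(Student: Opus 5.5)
The plan is to use the baseline form of $G$ in \cref{eq:G} and then apply Cauchy--Schwarz. By \cref{eq:G} with $\bar{\ell}(\theta;\beta_0)$ defined in \cref{eq:baseline}, we have
\begin{equation*}
    G(\theta;\beta_0) = \mathbb{E}_{z \sim \mathcal{D}_{\beta_0}}\qty[(\ell(z;\theta)-\bar{\ell}(\theta;\beta_0))\, s(z;\beta_0)].
\end{equation*}
This identity relies on $\mathbb{E}_{z\sim\mathcal{D}_{\beta_0}}[s(z;\beta_0)]=0$, which was derived in the main text from \cref{item:regularity}. It also requires that $\ell(z;\theta)s(z;\beta_0)$ and $s(z;\beta_0)$ be integrable. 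This holds by \cref{item:poly_score,item:poly_ell} together with \cref{lem:uniform-polynomial-moments}, because the degrees involved ($q_1+q_3$ and $q_1$) are at most $q_{\max}$.

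To bound the norm of this vector, I would test it against an arbitrary unit vector $u\in\mathbb{R}^m$, or equivalently move the norm inside the expectation. Moving the norm inside (Jensen's inequality for the norm) gives
\begin{equation*}
    \norm{G(\theta;\beta_0)} \le \mathbb{E}_{z \sim \mathcal{D}_{\beta_0}}\qty[\abs{\ell(z;\theta)-\bar{\ell}(\theta;\beta_0)}\,\norm{s(z;\beta_0)}].
\end{equation*}
Applying Cauchy--Schwarz to this scalar expectation bounds it by
\begin{equation*}
    \mathbb{E}\qty[(\ell-\bar{\ell})^2]^{1/2}\,\mathbb{E}\qty[\norm{s}^2]^{1/2}
    = \sqrt{\mathrm{Var}_{z \sim \mathcal{D}_{\beta_0}}[\ell(z;\theta)]}\,\sqrt{\Tr(I(\beta_0))},
\end{equation*}
where the last equality uses \cref{eq:Fisher_trace}.

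Finally, \cref{asm:ell_bounded} gives $\mathrm{Var}[\ell]\le M_\ell$ and \cref{asm:Fisher} gives $\Tr(I(\beta_0))\le I_{\max}$. Both conditions hold under \cref{asm:main} by \cref{prop:technical_conditions}, so $\norm{G(\theta;\beta_0)}\le\sqrt{M_\ell I_{\max}}$.

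The only delicate step is justifying the baseline subtraction, that is, the integrability conditions under which $\mathbb{E}[s]=0$ can be used inside $G$. Once that is settled, the rest is a single application of Cauchy--Schwarz.
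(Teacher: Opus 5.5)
Your proposal is correct and follows the paper's proof essentially step for step. The paper uses the baseline form \cref{eq:G}, moves the norm inside the expectation, applies Cauchy--Schwarz to get $\sqrt{\mathrm{Var}[\ell]\,\Tr(I(\beta_0))}$ via \cref{eq:Fisher_trace}, and concludes with \cref{asm:ell_bounded,asm:Fisher}. Your extra integrability check, using degrees $q_1+q_3$ and $q_1$, both at most $q_{\max}$, is a harmless addition that the paper leaves implicit.
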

\begin{proof}
    We can evaluate as follows:
    \begin{align*}
                & \norm{G(\theta; \beta_0)} \notag                                                                                                                                                                                                       \\
        ={}     & \norm{\mathbb{E}_{z \sim \mathcal{D}_{\beta_0}} \qty[(\ell(z; \theta) - \bar{\ell}(\theta;\beta_0)) s(z;\beta_0)]}                                                          &  & (\text{by \cref{eq:G}})             \notag            \\
        \leq{}  & \mathbb{E}_{z \sim \mathcal{D}_{\beta_0}} \qty[\norm{\ell(z; \theta) - \bar{\ell}(\theta;\beta_0)} \norm{s(z;\beta_0)}]                                                     &  & (\text{triangle inequality})  \notag                  \\
        \leq {} & \sqrt{\mathbb{E}_{z \sim \mathcal{D}_{\beta_0}} \qty[(\ell(z;\theta) - \bar{\ell}(\theta;\beta_0))^2]\mathbb{E}_{z \sim \mathcal{D}_{\beta_0}} \qty[\norm{s(z;\beta_0)}^2]} &  & (\text{the Cauchy--Schwarz inequality})  \notag       \\
        = {}    & \sqrt{\mathrm{Var}_{z \sim \mathcal{D}_{\beta_0}} \qty[\ell(z; \theta)]\Tr(I(\beta_0))}                                                                                     &  & (\text{by \cref{eq:baseline,eq:Fisher_trace}}) \notag \\
        \leq{}  & \sqrt{M_\ell I_{\max}},                                                                                                                                                     &  & (\text{\cref{asm:ell_bounded,asm:Fisher}})
    \end{align*}
    which concludes the proof.
\end{proof}

Next, using these lemmas, we bound the gradient differences of $\nabla_1 \mathcal{L}$ and $\nabla_2 \mathcal{L}$, respectively.

\begin{lemma}\label{lem:PL_LSmooth_1}
    Suppose that \cref{asm:main} holds.
    Then, for all $\theta, \theta' \in \Theta$, we have
    \begin{equation*}
        \norm{\nabla_1 \mathcal{L}(\theta)-\nabla_1 \mathcal{L}(\theta')} \leq \qty(L_{\ell,\theta}^{\mathrm{sm}} + L_{\ell,\theta}^{\mathrm{Lip}} L_{\beta,\theta}^{\mathrm{Lip}} \sqrt{I_{\max}}) \norm{\theta-\theta'}.
    \end{equation*}
\end{lemma}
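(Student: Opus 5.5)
We prove the bound by inserting an intermediate term that freezes the data distribution while the loss argument moves. Recall from \cref{eq:nabla_L1} that $\nabla_1\mathcal{L}(\theta)=\mathbb{E}_{z\sim\mathcal{D}_{\beta(\theta)}}[\nabla_\theta\ell(z;\theta)]$. We write
\begin{align*}
    \nabla_1\mathcal{L}(\theta)-\nabla_1\mathcal{L}(\theta')
    ={} & \mathbb{E}_{z\sim\mathcal{D}_{\beta(\theta)}}\qty[\nabla_\theta\ell(z;\theta)-\nabla_\theta\ell(z;\theta')] \\
    & + \qty(\mathbb{E}_{z\sim\mathcal{D}_{\beta(\theta)}}\qty[\nabla_\theta\ell(z;\theta')]-\mathbb{E}_{z\sim\mathcal{D}_{\beta(\theta')}}\qty[\nabla_\theta\ell(z;\theta')]),
\end{align*}
and then bound the two pieces separately using the triangle inequality.

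The first piece is a change in the parameter under a fixed distribution $\mathcal{D}_{\beta(\theta)}$. We move the norm inside the expectation via Jensen's inequality. The second inequality of \cref{asm:ell_theta} then bounds it directly by $L_{\ell,\theta}^{\mathrm{sm}}\norm{\theta-\theta'}$. That condition is stated exactly under $\mathcal{D}_{\beta(\theta)}$, so the ordering of the split matters. In the proof of \cref{prop:technical_conditions}, the constant was in fact obtained as a supremum over all $\beta_0\in B$, so either ordering would work.

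The second piece is a change of distribution at the fixed loss parameter $\theta'$. We apply \cref{lem:grad_ell_lip_beta} with parameter $\theta'$ and the two distribution parameters $\beta(\theta'),\beta(\theta)\in B$. This is where we expect the only subtlety. The lemma is stated with $\beta_0=\beta(\theta)$ tied to the loss parameter, but its proof uses only the convexity of $B$, the uniform second-moment bound on $\nabla_\theta\ell$ over all of $B$ from \cref{asm:ell_theta}, and \cref{asm:Fisher}. It therefore holds for arbitrary $\beta_0,\beta_1\in B$, and in particular with $\beta_0=\beta(\theta')$ matching our fixed parameter $\theta'$. This gives the bound $L_{\ell,\theta}^{\mathrm{Lip}}\sqrt{I_{\max}}\norm{\beta(\theta)-\beta(\theta')}$.

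Finally, the first inequality of \cref{asm:beta_theta} gives $\norm{\beta(\theta)-\beta(\theta')}\le L_{\beta,\theta}^{\mathrm{Lip}}\norm{\theta-\theta'}$. Adding the two bounds yields the claimed constant $L_{\ell,\theta}^{\mathrm{sm}}+L_{\ell,\theta}^{\mathrm{Lip}}L_{\beta,\theta}^{\mathrm{Lip}}\sqrt{I_{\max}}$.
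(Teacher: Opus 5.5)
Your proof is correct and is essentially the paper's argument: the same two-term split, one piece bounded by Jensen plus \cref{asm:ell_theta} and the other by \cref{lem:grad_ell_lip_beta} plus \cref{asm:beta_theta}. The only difference is that the paper inserts the other intermediate term, $\mathbb{E}_{z\sim\mathcal{D}_{\beta(\theta')}}[\nabla_\theta\ell(z;\theta)]$, shifting the distribution first at fixed $\theta$. The subtlety you flag does not arise for either ordering. Applying \cref{lem:grad_ell_lip_beta} at loss parameter $\theta'$ with $\beta_0=\beta(\theta')$ already meets the lemma's stated hypothesis. Also, \cref{asm:ell_theta} holds for all pairs $\theta,\theta'$, so it can be used with the roles of $\theta$ and $\theta'$ swapped.
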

\begin{proof}
    By \cref{eq:nabla_L1} and the triangle inequality, we have
    \begin{align}
               & \norm{\nabla_1 \mathcal{L}(\theta)-\nabla_1 \mathcal{L}(\theta')} \notag                                                                                                                                                                                                                                                                                                                          \\
        ={}    & \norm{\mathbb{E}_{z \sim \mathcal{D}_{\beta(\theta)}}[\nabla_{\theta} \ell(z; \theta)] - \mathbb{E}_{z \sim \mathcal{D}_{\beta(\theta')}}[\nabla_{\theta} \ell(z; \theta')]} \notag                                                                                                                                                                                                               \\
        \leq{} & \norm{\mathbb{E}_{z \sim \mathcal{D}_{\beta(\theta)}}[\nabla_{\theta} \ell(z; \theta)] - \mathbb{E}_{z \sim \mathcal{D}_{\beta(\theta')}}[\nabla_{\theta} \ell(z; \theta)]} + \norm{\mathbb{E}_{z \sim \mathcal{D}_{\beta(\theta')}}[\nabla_{\theta} \ell(z; \theta)] - \mathbb{E}_{z \sim \mathcal{D}_{\beta(\theta')}}[\nabla_{\theta} \ell(z; \theta')]} \label{eq:PL_LSmooth_1_decomposition}
    \end{align}
    We first bound the first term of \cref{eq:PL_LSmooth_1_decomposition}.
    By \cref{lem:grad_ell_lip_beta}, we have
    \begin{align}
        \norm{
            \mathbb{E}_{z \sim \mathcal{D}_{\beta(\theta)}}\qty[\nabla_{\theta} \ell(z; \theta)]
            -
            \mathbb{E}_{z \sim \mathcal{D}_{\beta(\theta')}}\qty[\nabla_{\theta} \ell(z; \theta)]}
         & \leq L_{\ell, \theta}^{\mathrm{Lip}} \sqrt{I_{\max}} \norm{\beta(\theta) - \beta(\theta')}                  &  & (\text{\cref{lem:grad_ell_lip_beta}}) \notag \\
         & \leq L_{\ell, \theta}^{\mathrm{Lip}} L_{\beta, \theta}^{\mathrm{Lip}} \sqrt{I_{\max}} \norm{\theta-\theta'} &  & (\text{\cref{asm:beta_theta}})
        \label{eq:PL_LSmooth_1_1}
    \end{align}
    We next bound the second term of \cref{eq:PL_LSmooth_1_decomposition}.
    We have
    \begin{align}
               & \norm{\mathbb{E}_{z \sim \mathcal{D}_{\beta(\theta')}}[\nabla_{\theta} \ell(z; \theta)] - \mathbb{E}_{z \sim \mathcal{D}_{\beta(\theta')}}[\nabla_{\theta} \ell(z; \theta')]} \notag                                    \\
        \leq{} & \mathbb{E}_{z \sim \mathcal{D}_{\beta(\theta')}}\qty[\norm{\nabla_{\theta} \ell(z; \theta) - \nabla_{\theta} \ell(z; \theta')}]                                                      &  & (\text{Jensen's inequality})
        \notag                                                                                                                                                                                                                           \\
        \leq{} & L_{\ell,\theta}^{\mathrm{sm}} \norm{\theta-\theta'}.                                                                                                                                 &  & (\text{\cref{asm:ell_theta}})
        \label{eq:PL_LSmooth_1_2}
    \end{align}
    Thus, substituting \cref{eq:PL_LSmooth_1_1,eq:PL_LSmooth_1_2} into \cref{eq:PL_LSmooth_1_decomposition}, we obtain
    \begin{equation}
        \norm{\nabla_1 \mathcal{L}(\theta)-\nabla_1 \mathcal{L}(\theta')}
        \leq   \qty(L_{\ell,\theta}^{\mathrm{sm}} + L_{\ell,\theta}^{\mathrm{Lip}} L_{\beta,\theta}^{\mathrm{Lip}} \sqrt{I_{\max}}) \norm{\theta-\theta'},
        \label{eq:PL_LSmooth_1}
    \end{equation}
    which gives the desired bound of this lemma.
\end{proof}

\begin{lemma}\label{lem:PL_LSmooth_2}
    Suppose that \cref{asm:beta_theta,asm:G_Lipschitz,asm:ell_bounded,asm:Fisher} hold.
    Then, for all $\theta, \theta' \in \Theta$, we have
    \begin{equation*}
        \norm{\nabla_2 \mathcal{L}(\theta)-\nabla_2 \mathcal{L}(\theta')} \leq \qty(L_{\beta,\theta}^{\mathrm{sm}} \sqrt{M_\ell I_{\max}}+L_{\beta,\theta}^{\mathrm{Lip}}\qty(L_{G,\beta}^{\mathrm{Lip}} L_{\beta,\theta}^{\mathrm{Lip}}+L_{G,\theta}^{\mathrm{Lip}})) \norm{\theta-\theta'}.
    \end{equation*}
\end{lemma}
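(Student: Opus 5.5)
We write $J(\theta) \coloneqq {\pdv{\beta}{\theta}}(\theta)$, so that $\nabla_2\mathcal{L}(\theta) = J(\theta)^\top G(\theta;\beta(\theta))$, and split the difference by adding and subtracting the mixed term $J(\theta')^\top G(\theta;\beta(\theta))$:
\begin{equation*}
    \nabla_2\mathcal{L}(\theta)-\nabla_2\mathcal{L}(\theta')
    = \qty(J(\theta)-J(\theta'))^\top G(\theta;\beta(\theta))
    + J(\theta')^\top\qty(G(\theta;\beta(\theta))-G(\theta';\beta(\theta'))).
\end{equation*}
The triangle inequality then reduces the claim to bounding these two pieces separately, using the submultiplicativity of the induced norm (the transpose has the same operator norm).

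For the first piece, we bound $\norm{J(\theta)-J(\theta')} \le L_{\beta,\theta}^{\mathrm{sm}}\norm{\theta-\theta'}$ by the smoothness part of \cref{asm:beta_theta}. We then bound $\norm{G(\theta;\beta(\theta))}\le\sqrt{M_\ell I_{\max}}$. This follows from \cref{lem:G_norm_bound}, or directly from the baseline form \cref{eq:G}, Cauchy--Schwarz, \cref{asm:ell_bounded}, and \cref{asm:Fisher}; the direct route matches the lemma's hypothesis list, since it uses only those conditions. This piece contributes $L_{\beta,\theta}^{\mathrm{sm}}\sqrt{M_\ell I_{\max}}\norm{\theta-\theta'}$.

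For the second piece, we bound $\norm{J(\theta')}\le L_{\beta,\theta}^{\mathrm{Lip}}$, which follows from the Lipschitz bound in \cref{asm:beta_theta}: the directional derivative is a limit of difference quotients in the convex set $\Theta$, and we take the supremum over directions. \Cref{asm:G_Lipschitz} applied with $\beta_0=\beta(\theta)$ and $\beta_0'=\beta(\theta')$ (both in $B$, since $\beta(\Theta)\subseteq B$) gives
\begin{equation*}
    \norm{G(\theta;\beta(\theta))-G(\theta';\beta(\theta'))} \le L_{G,\beta}^{\mathrm{Lip}}\norm{\beta(\theta)-\beta(\theta')}+L_{G,\theta}^{\mathrm{Lip}}\norm{\theta-\theta'}.
\end{equation*}
Applying the Lipschitz bound on $\beta$ once more, this is at most $\qty(L_{G,\beta}^{\mathrm{Lip}}L_{\beta,\theta}^{\mathrm{Lip}}+L_{G,\theta}^{\mathrm{Lip}})\norm{\theta-\theta'}$. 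Summing the two pieces gives exactly the stated constant.

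The only mildly delicate point is justifying the operator-norm bound on the Jacobian from the Lipschitz condition of $\beta$, including at boundary points of $\Theta$. We handle it by continuity of $J$ (since $\beta\in C^3$), or simply by taking $L_{\beta,\theta}^{\mathrm{Lip}}$ as the supremum of $\norm{J}$, as in the proof of \cref{prop:technical_conditions}. Everything else is bookkeeping.
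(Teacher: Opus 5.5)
Your proof is correct and matches the paper's: you add and subtract the same mixed term ${\pdv{\beta}{\theta}}(\theta')^\top G(\theta;\beta(\theta))$, then apply the triangle inequality and submultiplicativity, \cref{lem:G_norm_bound} for $\norm{G}$, and \cref{asm:beta_theta,asm:G_Lipschitz} for the Jacobian and $G$ differences. One small caveat: your ``direct route'' via the baseline form \cref{eq:G} is not confined to the listed conditions either, since $\mathbb{E}[s(z;\beta_0)]=0$ relies on \cref{item:regularity} and $\beta(\Theta)\subseteq B$ on \cref{item:B_compact_convex}; this is harmless because \cref{asm:main} is in force wherever the lemma is used.
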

\begin{proof}
    By \cref{eq:nabla_L2}, we have
    \begin{align}
                 & \norm{\nabla_2 \mathcal{L}(\theta)-\nabla_2 \mathcal{L}(\theta')} \notag                                                              \\
        =     {} & \norm{
        {{\pdv{\beta}{\theta}}(\theta)}^\top G(\theta;\beta(\theta))
        -
        {{\pdv{\beta}{\theta}}(\theta')}^\top G(\theta';\beta(\theta'))
        }        &                                                                          & (\text{by \cref{eq:nabla_L2}})\notag                       \\
        \leq {}  &
        \norm{
        \qty({{\pdv{\beta}{\theta}}(\theta)}-{{\pdv{\beta}{\theta}}(\theta')})^\top G(\theta;\beta(\theta))
        }
        +
        \norm{
            {{\pdv{\beta}{\theta}}(\theta')}^\top
            \qty(
            G(\theta;\beta(\theta))-G(\theta';\beta(\theta'))
            )
        }        &                                                                          & (\text{triangle inequality})\notag                         \\
        \leq{}   &
        \norm{{{\pdv{\beta}{\theta}}(\theta)}-{{\pdv{\beta}{\theta}}(\theta')}} \, \norm{G(\theta;\beta(\theta))}
        +
        \norm{{{\pdv{\beta}{\theta}}(\theta')}} \,
        \norm{
            G(\theta;\beta(\theta))-G(\theta';\beta(\theta'))
        }.       &                                                                          & (\text{submultiplicativity}) \label{eq:beta_theta_G_bound}
    \end{align}
    We can bound the second norm in the second term of \cref{eq:beta_theta_G_bound} as
    \begin{align}
        \norm{
            G(\theta;\beta(\theta))-G(\theta';\beta(\theta'))
        }
                                                          & \le
        L_{G,\beta}^{\mathrm{Lip}} \norm{\beta(\theta)-\beta(\theta')}
        +
        L_{G,\theta}^{\mathrm{Lip}} \norm{\theta-\theta'} &     & (\text{\cref{asm:G_Lipschitz}}) \notag                       \\
                                                          & \le
        \qty(
        L_{G,\beta}^{\mathrm{Lip}} L_{\beta,\theta}^{\mathrm{Lip}}
        +
        L_{G,\theta}^{\mathrm{Lip}}
        )
        \norm{\theta-\theta'}.                            &     & (\text{\cref{asm:beta_theta}}) \label{eq:F_Lipschitz_conseq}
    \end{align}
    Then, by using \cref{asm:beta_theta}, \cref{lem:G_norm_bound}, \cref{asm:G_Lipschitz}, and \cref{eq:F_Lipschitz_conseq}, we can bound each term in \cref{eq:beta_theta_G_bound} as
    \begin{align*}
               & \norm{\nabla_2 \mathcal{L}(\theta)-\nabla_2 \mathcal{L}(\theta')}
        \notag                                                                                                                                                                                                                                                \\
        \leq{} & \qty(L_{\beta,\theta}^{\mathrm{sm}} \norm{\theta-\theta'}) \sqrt{M_\ell I_{\max}} + L_{\beta,\theta}^{\mathrm{Lip}} \qty(L_{G,\beta}^{\mathrm{Lip}} L_{\beta,\theta}^{\mathrm{Lip}}+L_{G,\theta}^{\mathrm{Lip}}) \norm{\theta-\theta'}\notag \\
        ={}    & \qty(L_{\beta,\theta}^{\mathrm{sm}} \sqrt{M_\ell I_{\max}}+L_{\beta,\theta}^{\mathrm{Lip}}\qty(L_{G,\beta}^{\mathrm{Lip}} L_{\beta,\theta}^{\mathrm{Lip}}+L_{G,\theta}^{\mathrm{Lip}})) \norm{\theta-\theta'},
    \end{align*}
    which gives the desired bound of this lemma.
\end{proof}

Finally, we prove \cref{prop:PL_LSmooth}, the main goal of this section.
\begin{proposition}[Restated]
    \propPLSmoothStatement[equation]
\end{proposition}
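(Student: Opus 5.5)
The plan is to assemble the restated \cref{prop:PL_LSmooth} directly from the decomposition at the start of this appendix together with \cref{lem:PL_LSmooth_1,lem:PL_LSmooth_2}. Fix arbitrary $\theta,\theta'\in\Theta$. By \cref{eq:nabla_L}, $\nabla\mathcal{L}=\nabla_1\mathcal{L}+\nabla_2\mathcal{L}$, and the triangle inequality gives
\begin{equation*}
    \norm{\nabla \mathcal{L}(\theta)-\nabla \mathcal{L}(\theta')}
    \le
    \norm{\nabla_1\mathcal{L}(\theta)-\nabla_1\mathcal{L}(\theta')}
    +
    \norm{\nabla_2\mathcal{L}(\theta)-\nabla_2\mathcal{L}(\theta')}.
\end{equation*}

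First, \cref{prop:technical_conditions} guarantees that \cref{asm:main} implies \cref{asm:PL_bounded,asm:ell_theta,asm:beta_theta,asm:G_Lipschitz,asm:ell_bounded,asm:Fisher}. These are the hypotheses needed by \cref{lem:grad_ell_lip_beta,lem:G_norm_bound}, and hence by \cref{lem:PL_LSmooth_1,lem:PL_LSmooth_2}.

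Next, I would bound the two terms separately:
\begin{itemize}
    \item \cref{lem:PL_LSmooth_1} bounds the first term by $\bigl(L_{\ell,\theta}^{\mathrm{sm}} + L_{\ell,\theta}^{\mathrm{Lip}} L_{\beta,\theta}^{\mathrm{Lip}}\sqrt{I_{\max}}\bigr)\norm{\theta-\theta'}$.
    \item \cref{lem:PL_LSmooth_2} bounds the second term by $\bigl(L_{\beta,\theta}^{\mathrm{sm}}\sqrt{M_\ell I_{\max}} + L_{\beta,\theta}^{\mathrm{Lip}}(L_{G,\beta}^{\mathrm{Lip}}L_{\beta,\theta}^{\mathrm{Lip}}+L_{G,\theta}^{\mathrm{Lip}})\bigr)\norm{\theta-\theta'}$.
\end{itemize}
Adding the two bounds gives exactly $\Lsm\norm{\theta-\theta'}$ with $\Lsm$ as defined in the statement.

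There is essentially no obstacle left, since the substantive work was done in the lemmas. The only points I would check are bookkeeping ones:
\begin{itemize}
    \item The lemmas are invoked only at points of $\Theta$. This is needed because the constants, in particular $L_{\beta,\theta}^{\mathrm{Lip}}$ and $L_{\beta,\theta}^{\mathrm{sm}}$, are suprema over $\Theta$.
    \item $\beta(\theta),\beta(\theta')\in B$ by \cref{item:B_compact_convex}. This is required for \cref{lem:grad_ell_lip_beta} and \cref{asm:G_Lipschitz} to apply.
\end{itemize}
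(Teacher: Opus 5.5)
Your proposal is correct and matches the paper's proof: split $\nabla\mathcal{L}=\nabla_1\mathcal{L}+\nabla_2\mathcal{L}$ via the triangle inequality, bound the two pieces with \cref{lem:PL_LSmooth_1,lem:PL_LSmooth_2}, and add the constants to get $\Lsm$. Your bookkeeping checks are sound: the conditions come from \cref{prop:technical_conditions}, and $\beta(\Theta)\subseteq B$ by \cref{item:B_compact_convex}.
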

\begin{proof}
    By \cref{eq:nabla_L} and the triangle inequality, we have
    \begin{equation}
        \norm{\nabla \mathcal{L}(\theta)-\nabla \mathcal{L}(\theta')}
        \le \norm{\nabla_1 \mathcal{L}(\theta)-\nabla_1 \mathcal{L}(\theta')} + \norm{\nabla_2 \mathcal{L}(\theta)-\nabla_2 \mathcal{L}(\theta')} \label{eq:PL_LSmooth_0}
    \end{equation}
    Substituting the bounds in \cref{lem:PL_LSmooth_1,lem:PL_LSmooth_2} into \cref{eq:PL_LSmooth_0}, we can derive
    \begin{equation*}
        \norm{\nabla \mathcal{L}(\theta) - \nabla \mathcal{L}(\theta')}
        \leq \qty(L_{\ell,\theta}^{\mathrm{sm}} + L_{\ell,\theta}^{\mathrm{Lip}} L_{\beta,\theta}^{\mathrm{Lip}} \sqrt{I_{\max}} + L_{\beta,\theta}^{\mathrm{sm}} \sqrt{M_\ell I_{\max}} + L_{\beta,\theta}^{\mathrm{Lip}} \qty(L_{G,\beta}^{\mathrm{Lip}} L_{\beta,\theta}^{\mathrm{Lip}}+L_{G,\theta}^{\mathrm{Lip}}))\norm{\theta-\theta'},
    \end{equation*}
    which concludes the proof.
\end{proof}

\section{Proof of Proposition 3 (Gradient-Mapping Bound)}
\label{app:proof_prop3}

In this section, we prove \cref{prop:bound_gradient_norm}, which states the general convergence guarantee of the projected gradient method for minimizing the performative loss $\mathcal{L}$.
We use the notation from the main text.

We first state the so-called projection theorem \citep[Proposition 1.1.9]{bertsekas2009convex}.
Recall that $\Theta_\delta$ is a shrunk nonempty closed convex subset of $\Theta$, and $\Proj(v)$ is the Euclidean projection of $v$ onto $\Theta_\delta$.
\begin{lemma}\label{lem:projection-theorem}
    For any $v \in \mathbb{R}^n$ and $x \in \Theta_\delta$, we have
    \begin{equation*}
        \qty(v - \Proj(v))^\top \qty(x - \Proj(v)) \leq 0.
    \end{equation*}
\end{lemma}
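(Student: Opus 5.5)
The plan is to derive the inequality from the optimality characterization of the Euclidean projection onto the closed convex set $\Theta_\delta$. Set $p \coloneqq \Proj(v)$. By definition, $p$ minimizes the strongly convex function $\phi(y) \coloneqq \tfrac12\norm{v-y}^2$ over $\Theta_\delta$. Existence and uniqueness of $p$ follow from \cref{item:Theta_compact_convex}: $\Theta_\delta$ is nonempty, closed and convex, and $\phi$ is coercive and strictly convex.

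To extract the variational inequality, fix any $x \in \Theta_\delta$ and consider the segment $y_s \coloneqq p + s(x - p)$ for $s \in [0,1]$. By convexity of $\Theta_\delta$, each $y_s$ lies in $\Theta_\delta$, so minimality gives $\phi(y_s) \ge \phi(p)$. Expanding the square yields
\begin{equation*}
    \phi(y_s) - \phi(p) = -s\,(v-p)^\top(x-p) + \tfrac{s^2}{2}\norm{x-p}^2 \ge 0
\end{equation*}
for all $s \in (0,1]$. Dividing by $s$ and letting $s \downarrow 0$ gives $(v-p)^\top(x-p) \le 0$, which is the claim.

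There is no real obstacle here. The only points to check are the convexity of $\Theta_\delta$, which is assumed in \cref{item:Theta_compact_convex}, so that the segment stays feasible, and the well-definedness of $\Proj$. Both are already available in the paper.

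Alternatively, one could simply cite \citep[Proposition 1.1.9]{bertsekas2009convex} directly, as the paper indicates. The elementary argument above is what that citation contains.
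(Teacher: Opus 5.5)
Your proof is correct and takes essentially the paper's approach. The paper applies the first-order optimality condition $\nabla f(\Proj(v))^\top (x - \Proj(v)) \geq 0$ for $f(x) = \frac{1}{2}\norm{x - v}^2$ over the convex set $\Theta_\delta$, and your segment expansion with $s \downarrow 0$ is an explicit derivation of that same condition for this quadratic.
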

\begin{proof}
    Note that $\Proj(v) \in \Theta_\delta$ is the solution to the following optimization problem:
    \begin{equation*}
        \min_{x \in \Theta_\delta} f(x) = \frac{1}{2} \norm{x-v}^2.
    \end{equation*}
    By the convexity of $\Theta_\delta$ and the optimality condition, for any $x \in \Theta_\delta$, we have
    \begin{equation*}
        \nabla f(\Proj(v))^\top (x - \Proj(v)) \geq 0.
    \end{equation*}
    Since $\nabla f(\Proj(v)) = \Proj(v) - v$, we obtain the desired inequality.
\end{proof}

We next derive a key bound regarding the gradient mapping, which is used in the convergence analysis.
Recall that the projected gradient method updates the parameter as
\begin{equation*}
    \theta_{t+1} = \Proj\qty(\theta_t - \alpha \widehat{\nabla \mathcal{L}}(\theta_t)),
\end{equation*}
and the true and estimated gradient mappings are defined as
\begin{align}
    \mathcal{G}_{\alpha}(\theta_t)
     & =
    \frac{1}{\alpha}
    \qty(\theta_t - \Proj\qty(\theta_t - \alpha \nabla \mathcal{L}(\theta_t))), \label{eq:true-gradient-mapping} \\
    \widehat{\mathcal{G}_{\alpha}}(\theta_t)
     & =
    \frac{1}{\alpha}
    \qty(\theta_t - \Proj\qty(\theta_t - \alpha \widehat{\nabla \mathcal{L}}(\theta_t)))
    = \frac{1}{\alpha} (\theta_t - \theta_{t+1}). \label{eq:estimated-gradient-mapping}
\end{align}

\begin{lemma}\label{lem:gradmap-main-bound}
    For any $t$, we have
    \begin{equation*}
        \norm{\mathcal{G}_\alpha(\theta_t)}^2 - 2 \nabla \mathcal{L}(\theta_t)^\top \widehat{\mathcal{G}_\alpha}(\theta_t) + \norm{\widehat{\mathcal{G}_\alpha}(\theta_t)}^2
        \leq \norm{\nabla \mathcal{L}(\theta_t)-\widehat{\nabla \mathcal{L}}(\theta_t)}^2.
    \end{equation*}
\end{lemma}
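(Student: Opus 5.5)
The plan is to reduce the claim to three instances of the projection theorem, \cref{lem:projection-theorem}, followed by one algebraic identity. Fix $t$ and abbreviate $g \coloneqq \nabla\mathcal{L}(\theta_t)$, $\hat g \coloneqq \widehat{\nabla\mathcal{L}}(\theta_t)$, $G \coloneqq \mathcal{G}_\alpha(\theta_t)$ and $\hat G \coloneqq \widehat{\mathcal{G}_\alpha}(\theta_t)$. By \cref{eq:true-gradient-mapping,eq:estimated-gradient-mapping} we have $\Proj(\theta_t-\alpha g)=\theta_t-\alpha G$ and $\Proj(\theta_t-\alpha\hat g)=\theta_t-\alpha\hat G$.

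The three inequalities come from \cref{lem:projection-theorem} with the following choices.
\begin{itemize}
    \item With $v=\theta_t-\alpha g$ and $x=\theta_t$, we have $v-\Proj(v)=\alpha(G-g)$ and $x-\Proj(v)=\alpha G$. This gives $C \coloneqq (G-g)^\top G \le 0$.
    \item With $v=\theta_t-\alpha\hat g$ and $x=\theta_t$, the same computation gives $D \coloneqq (\hat G-\hat g)^\top \hat G\le 0$. I expect to need only $C$ and the next inequality, but $D$ is available.
    \item With $v=\theta_t-\alpha\hat g$ and $x=\Proj(\theta_t-\alpha g)=\theta_t-\alpha G$, we have $x-\Proj(v)=\alpha(\hat G-G)$. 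This gives $B\coloneqq(\hat G-\hat g)^\top(\hat G-G)\le 0$.
\end{itemize}
The only nonobvious point is admissibility. The choice $x=\theta_t$ requires $\theta_t\in\Theta_\delta$. This holds because $\theta_0\in\Theta_\delta$ by assumption, and every later iterate is an output of $\Proj$, so it lies in $\Theta_\delta$.

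Next I will set $T\coloneqq \norm{G}^2-2g^\top\hat G+\norm{\hat G}^2-\norm{g-\hat g}^2$ and show $T\le 0$. The key step is the identity
\begin{equation*}
    T = 2C + 2B - \norm{(g-\hat g)-(G-\hat G)}^2 .
\end{equation*}
To verify it, expand $\norm{(g-\hat g)-(G-\hat G)}^2=\norm{g-\hat g}^2-2(g-\hat g)^\top(G-\hat G)+\norm{G-\hat G}^2$ and add it to $T$. After cancellation the sum should equal
\begin{equation*}
    2\bigl[\norm{G}^2-g^\top G+\norm{\hat G}^2-\hat g^\top\hat G-G^\top\hat G+\hat g^\top G\bigr],
\end{equation*}
and the bracket is exactly $C+B$.

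Since $C\le0$, $B\le0$, and the subtracted term is a squared norm, this gives $T\le0$, which is the claimed inequality. As a sanity check, in the unconstrained case we have $G=g$ and $\hat G=\hat g$, so $B=C=0$ and the bound holds with equality. The main obstacle is finding the correct combination of projection inequalities; the identity above is that combination. Once it is fixed, the remaining work is the routine expansion and the admissibility check $\theta_t\in\Theta_\delta$.
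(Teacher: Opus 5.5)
Your proof is correct and follows essentially the same route as the paper: the paper uses exactly your two inequalities $C\le 0$ and $B\le 0$ (its \cref{eq:gradmap-main-bound-1,eq:gradmap-main-bound-2}, with your $D$ unused), together with the same admissibility observation $\theta_t\in\Theta_\delta$. Your identity $T=2C+2B-\norm{(g-\hat g)-(G-\hat G)}^2$ is the paper's chain of equalities collapsed into one line, with its final step $2a^\top b-\norm{b}^2\le\norm{a}^2$ written as a completed square.
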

\begin{proof}
    Since $\theta_t \in \Theta_\delta$ and $\Proj(v) \in \Theta_\delta$ for every $v \in \mathbb{R}^n$, applying \cref{lem:projection-theorem} gives the following two inequalities:
    \begin{align*}
         & (\theta_t - \alpha \nabla \mathcal{L}(\theta_t) - \Proj(\theta_t - \alpha \nabla \mathcal{L}(\theta_t)))^\top (\theta_{t} - \Proj(\theta_t - \alpha \nabla \mathcal{L}(\theta_t))) \leq 0,                                                     \\
         & (\theta_t-\alpha \widehat{\nabla \mathcal{L}}(\theta_t)-\Proj(\theta_t-\alpha \widehat{\nabla \mathcal{L}}(\theta_t)))^\top (\Proj(\theta_t-\alpha \nabla \mathcal{L}(\theta_t))-\Proj(\theta_t-\alpha \widehat{\nabla \mathcal{L}}(\theta_t))
        ) \leq 0.
    \end{align*}
    Define $D_t \coloneqq \mathcal{G}_\alpha(\theta_t) - \widehat{\mathcal{G}_\alpha}(\theta_t)$.
    By \cref{eq:estimated-gradient-mapping,eq:true-gradient-mapping} and $\alpha>0$, we can rewrite the above inequalities as
    \begin{align}
        \qty(\mathcal{G}_{\alpha}(\theta_t) - \nabla \mathcal{L}(\theta_t))^\top \mathcal{G}_{\alpha}(\theta_t)
        = \mathcal{G}_{\alpha}(\theta_t)^\top \qty(\mathcal{G}_{\alpha}(\theta_t) - \nabla \mathcal{L}(\theta_t))
        &\leq 0, \label{eq:gradmap-main-bound-1}                                                                \\
        \qty(\widehat{\mathcal{G}_{\alpha}}(\theta_t) - \widehat{\nabla \mathcal{L}}(\theta_t))^\top (\widehat{\mathcal{G}_{\alpha}}(\theta_t)-\mathcal{G}_{\alpha}(\theta_t))
        = \qty(\widehat{\nabla \mathcal{L}}(\theta_t) - \widehat{\mathcal{G}_{\alpha}}(\theta_t))^\top D_t
        &\leq 0. \label{eq:gradmap-main-bound-2}
    \end{align}
    Then, we have
    \begin{align*}
               & \norm{\mathcal{G}_\alpha(\theta_t)}^2 - 2 \nabla \mathcal{L}(\theta_t)^\top \widehat{\mathcal{G}_\alpha}(\theta_t) + \norm{\widehat{\mathcal{G}_\alpha}(\theta_t)}^2                                                                                                                      \\
        ={}    & 2 \mathcal{G}_\alpha(\theta_t)^\top \qty(\mathcal{G}_\alpha(\theta_t) - \nabla \mathcal{L}(\theta_t)) + 2 \nabla \mathcal{L}(\theta_t)^\top D_t + \norm{\widehat{\mathcal{G}_\alpha}(\theta_t)}^2 - \norm{\mathcal{G}_\alpha(\theta_t)}^2                                                 \\
        \leq{} & 2 \nabla \mathcal{L}(\theta_t)^\top D_t + \norm{\widehat{\mathcal{G}_\alpha}(\theta_t)}^2 - \norm{\mathcal{G}_\alpha(\theta_t)}^2                                                                                                         &  & (\text{by \cref{eq:gradmap-main-bound-1}}) \\
        ={}    & 2\qty(\nabla \mathcal{L}(\theta_t) - \widehat{\mathcal{G}_\alpha}(\theta_t))^\top D_t - \norm{D_t}^2                                                                                                                                                                                      \\
        ={}    & 2\qty(\nabla \mathcal{L}(\theta_t) - \widehat{\nabla \mathcal{L}}(\theta_t))^\top D_t - \norm{D_t}^2 + 2\qty(\widehat{\nabla \mathcal{L}}(\theta_t) - \widehat{\mathcal{G}_\alpha}(\theta_t))^\top D_t                                                                                    \\
        \leq{} & 2\qty(\nabla \mathcal{L}(\theta_t) - \widehat{\nabla \mathcal{L}}(\theta_t))^\top D_t - \norm{D_t}^2                                                                                                                                      &  & (\text{by \cref{eq:gradmap-main-bound-2}}) \\
        \leq{} & \norm{\nabla \mathcal{L}(\theta_t)-\widehat{\nabla \mathcal{L}}(\theta_t)}^2.                                                                                                                                                             &  & (2a^\top b - \norm{b}^2 \leq \norm{a}^2)
    \end{align*}
    which concludes the proof.
\end{proof}

The bound in \cref{lem:gradmap-main-bound} is tight when $\Theta(=\Theta_\delta)=\mathbb{R}^n$, i.e., $\mathcal{G}_\alpha(\theta_t) = \nabla \mathcal{L}(\theta_t)$ and $\widehat{\mathcal{G}_\alpha}(\theta_t) = \widehat{\nabla \mathcal{L}}(\theta_t)$.

Now, we are ready to prove \cref{prop:bound_gradient_norm}.
For background on projected gradient mappings and related descent arguments, see \citep[Lemma 10.4, Lemma 10.14]{beckChapter10Proximal2017}. 

\begin{proposition}[Restated]
    \propBoundGradientNormStatement[equation]
\end{proposition}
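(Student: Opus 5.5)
We will run the standard descent-lemma argument for projected gradient methods with an inexact gradient, using \cref{prop:PL_LSmooth} for smoothness and \cref{lem:gradmap-main-bound} to handle the gradient error. One preliminary check: the iterates stay in $\Theta_\delta\subseteq\Theta$. This holds by induction, since $\theta_0\in\Theta_\delta$ and $\Proj$ maps into $\Theta_\delta$. Because $\Theta_\delta$ is convex, the segment between $\theta_t$ and $\theta_{t+1}$ lies in $\Theta$, so the $\Lsm$-smoothness of \cref{prop:PL_LSmooth} applies along it.

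\textbf{Descent step.} By smoothness and \cref{eq:gradient_mapping_update},
\begin{equation*}
    \mathcal{L}(\theta_{t+1}) \le \mathcal{L}(\theta_t) - \alpha \nabla\mathcal{L}(\theta_t)^\top \widehat{\mathcal{G}_\alpha}(\theta_t) + \frac{\Lsm\alpha^2}{2}\norm{\widehat{\mathcal{G}_\alpha}(\theta_t)}^2 .
\end{equation*}
Since $\alpha\le 1/\Lsm$, the last term is at most $\frac{\alpha}{2}\norm{\widehat{\mathcal{G}_\alpha}(\theta_t)}^2$. This gives
\begin{equation*}
    \mathcal{L}(\theta_{t+1}) \le \mathcal{L}(\theta_t) - \frac{\alpha}{2}\qty(2\nabla\mathcal{L}(\theta_t)^\top\widehat{\mathcal{G}_\alpha}(\theta_t) - \norm{\widehat{\mathcal{G}_\alpha}(\theta_t)}^2).
\end{equation*}

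\textbf{Error term.} \cref{lem:gradmap-main-bound} rearranges to
\begin{equation*}
    2\nabla\mathcal{L}(\theta_t)^\top\widehat{\mathcal{G}_\alpha}(\theta_t) - \norm{\widehat{\mathcal{G}_\alpha}(\theta_t)}^2 \ge \norm{\mathcal{G}_\alpha(\theta_t)}^2 - \norm{\nabla\mathcal{L}(\theta_t)-\widehat{\nabla\mathcal{L}}(\theta_t)}^2.
\end{equation*}
Substituting into the descent step yields the per-iteration inequality
\begin{equation*}
    \frac{\alpha}{2}\norm{\mathcal{G}_\alpha(\theta_t)}^2 \le \mathcal{L}(\theta_t)-\mathcal{L}(\theta_{t+1}) + \frac{\alpha}{2}\norm{\nabla\mathcal{L}(\theta_t)-\widehat{\nabla\mathcal{L}}(\theta_t)}^2 .
\end{equation*}

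\textbf{Summation.} Take expectations and sum over $t=0,\dots,T-1$; the loss differences telescope. Bound $\mathbb{E}[\mathcal{L}(\theta_T)]\ge\mathcal{L}_*$ using \cref{asm:PL_bounded}, valid because $\theta_T\in\Theta$. Multiplying by $2/(\alpha T)$ then gives exactly the claimed bound.

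Integrability of the expectations is not an issue: $\mathcal{L}$ is bounded on the compact set $\Theta$, and the error term may be infinite, in which case the bound is trivial.

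The only delicate point is the one flagged above: the smoothness inequality of \cref{prop:PL_LSmooth} is only guaranteed on $\Theta$, which is why we confirm the iterates and the segments between them stay in $\Theta$. The substantive inequality work is already done by \cref{lem:gradmap-main-bound}.
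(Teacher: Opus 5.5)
Your proof is correct and follows the paper's argument essentially step for step. The steps are the descent inequality from the $\Lsm$-smoothness of \cref{prop:PL_LSmooth}, absorbing the quadratic term via $\alpha \le 1/\Lsm$, invoking \cref{lem:gradmap-main-bound} to exchange the inner product for $\norm{\mathcal{G}_\alpha(\theta_t)}^2$ minus the gradient error, and then telescoping with the bound $\mathcal{L}(\theta_T)\ge\mathcal{L}_*$. Your explicit induction showing $\theta_t\in\Theta_\delta$ justifies two things the paper leaves implicit: that the segment between iterates lies in $\Theta$, and that the lemma's hypothesis holds.
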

\begin{proof}
    Under the $\Lsm$-smoothness of $\mathcal{L}$ in \cref{prop:PL_LSmooth}, the following inequality holds:
    \begin{align*}
        \mathcal{L}(\theta_{t+1}) & \le \mathcal{L}(\theta_t) + \nabla\mathcal{L}(\theta_t)^\top (\theta_{t+1}-\theta_t) + \frac{\Lsm}{2}\norm{\theta_{t+1}-\theta_t}^2 \notag                                                                                                                           \\
                                  & = \mathcal{L}(\theta_t) - \alpha \nabla \mathcal{L}(\theta_t)^\top \widehat{\mathcal{G}_{\alpha}}(\theta_t) + \frac{\Lsm \alpha^2}{2} \norm{\widehat{\mathcal{G}_{\alpha}}(\theta_t)}^2                    \notag &  & (\text{by \cref{eq:gradient_mapping_update}}) \\
                                  & \le \mathcal{L}(\theta_t) - \alpha\nabla\mathcal{L}(\theta_t)^\top \widehat{\mathcal{G}_\alpha}(\theta_t) + \frac{\alpha}{2}\norm{\widehat{\mathcal{G}_\alpha}(\theta_t)}^2  \notag                               &  & (\alpha \leq 1/\Lsm)                          \\
                                  & \le\mathcal{L}(\theta_t) - \frac{\alpha}{2}\norm{\mathcal{G}_\alpha(\theta_t)}^2 + \frac{\alpha}{2}\norm{\widehat{\nabla\mathcal{L}}(\theta_t) - \nabla \mathcal{L}(\theta_t)}^2,                                 &  & (\text{\cref{lem:gradmap-main-bound}})
    \end{align*}
    This yields
    \begin{equation*}
        \mathcal{L}(\theta_{t+1}) - \mathcal{L}(\theta_t) \le - \frac{\alpha}{2}\norm{\mathcal{G}_{\alpha}(\theta_t)}^2 + \frac{\alpha}{2}\norm{\nabla\mathcal{L}(\theta_t) - \widehat{\nabla\mathcal{L}}(\theta_t)}^2.
    \end{equation*}
    Summing over $t=0,1,\ldots,T-1$, we obtain
    \begin{equation*}
        \mathcal{L}(\theta_T)-\mathcal{L}(\theta_0) \le -\frac{\alpha}{2}\sum_{t=0}^{T-1}\norm{\mathcal{G}_{\alpha}(\theta_t)}^2+\frac{\alpha}{2}\sum_{t=0}^{T-1}\norm{\nabla\mathcal{L}(\theta_t) - \widehat{\nabla\mathcal{L}}(\theta_t)}^2.
    \end{equation*}
    Rearranging and using \cref{asm:PL_bounded} established in \cref{prop:technical_conditions}, we have
    \begin{equation*}
        \sum_{t=0}^{T-1}\norm{\mathcal{G}_{\alpha}(\theta_t)}^2
        \le \frac{2 \qty(\mathcal{L}(\theta_0)-\mathcal{L}(\theta_T))}{\alpha}+\sum_{t=0}^{T-1}\norm{\nabla\mathcal{L}(\theta_t) - \widehat{\nabla\mathcal{L}}(\theta_t)}^2
        \le \frac{2 \qty(\mathcal{L}(\theta_0)-\mathcal{L}_*)}{\alpha}+\sum_{t=0}^{T-1}\norm{\nabla\mathcal{L}(\theta_t) - \widehat{\nabla\mathcal{L}}(\theta_t)}^2.
    \end{equation*}
    Finally, taking expectations on both sides and dividing by $T$, the linearity of expectation yields
    \begin{equation*}
        \frac{1}{T} \sum_{t=0}^{T-1} \mathbb{E} \left[\norm{\mathcal{G}_{\alpha}(\theta_t)}^2 \right]
        \le \frac{2 \qty(\mathcal{L}(\theta_0)-\mathcal{L}_*)}{\alpha T}+\frac{1}{T}\sum_{t=0}^{T-1}\mathbb{E}\left[\norm{\nabla\mathcal{L}(\theta_t) - \widehat{\nabla\mathcal{L}}(\theta_t)}^2\right].
    \end{equation*}
    This concludes the proof.
\end{proof}

\section{Proof of Proposition 4 (Gradient-Estimation Error Bound)}
\label{app:proof_prop4}

In this section, we prove \cref{prop:bound_L1_L2}, which gives a bound on the estimation error of the gradient estimator used in \cref{alg:performative_prediction_1st_order}.
We first establish the following lemma.
\begin{lemma}\label{lem:estimated_Jacobian_error}
    Suppose that \cref{asm:main} holds, $b \geq b_{\min}$, and $\theta_t \in \Theta_\delta$.
    Then the finite-difference Jacobian estimator satisfies the following bound:
    \begin{equation*}
        \mathbb{E}\left[\norm{{\pdv{\beta}{\theta}}(\theta_t) - \widehat{\pdv{\beta}{\theta}}(\theta_t)}^2\right]
        \leq
        n \qty(
        \frac{(L_{\beta,\theta}^{\mathrm{H}})^2\delta^4}{12}
        +
        \frac{3\sigma_{\max}^2}{2\delta^2 b}
        ).
    \end{equation*}
\end{lemma}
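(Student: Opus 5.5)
The plan is to work column by column, since the squared operator norm is bounded by the squared Frobenius norm:
\begin{equation*}
    \norm{{\pdv{\beta}{\theta}}(\theta_t) - \widehat{\pdv{\beta}{\theta}}(\theta_t)}^2
    \le
    \sum_{i=1}^n \norm{\partial_i\beta(\theta_t) - \frac{\hat\beta_{t,i}^{+}-\hat\beta_{t,i}^{-}}{2\delta}}^2.
\end{equation*}
It then suffices to show that each column error has expectation at most $(L_{\beta,\theta}^{\mathrm{H}})^2\delta^4/12 + 3\sigma_{\max}^2/(2\delta^2 b)$; summing over $i$ gives the factor $n$.

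For a fixed $i$, I split the column error into a bias term and a noise term:
\begin{equation*}
    a_i \coloneqq \partial_i\beta(\theta_t) - \frac{\beta(\theta_t+\delta e_i)-\beta(\theta_t-\delta e_i)}{2\delta},
    \qquad
    \xi_i \coloneqq \frac{(\hat\beta_{t,i}^{+}-\beta(\theta_t+\delta e_i)) - (\hat\beta_{t,i}^{-}-\beta(\theta_t-\delta e_i))}{2\delta}.
\end{equation*}
The column error is then $a_i - \xi_i$. Since $\theta_t\in\Theta_\delta$, both points $\theta_t\pm\delta e_i$ lie in $\Theta$. So \cref{item:identifiability} applies to each perturbed estimator, and the segment between them lies in the convex set $\Theta$, where \cref{asm:beta_theta} is available.

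\textbf{Bias term.} Set $g(s)=\beta(\theta_t+s e_i)$ and Taylor expand to third order with integral remainder. The even-order terms cancel in the centered difference, which leaves
\begin{equation*}
    a_i = -\frac{1}{2\delta}\int_0^\delta \frac{(\delta-s)^2}{2}\bigl(g'''(s)+g'''(-s)\bigr)\,\dd{s}.
\end{equation*}
Using $\norm{g'''}\le L_{\beta,\theta}^{\mathrm{H}}$ from the Hessian-Lipschitz bound gives $\norm{a_i}\le L_{\beta,\theta}^{\mathrm{H}}\delta^2/6$. Squaring yields $\delta^4/36$, which is smaller than the stated $\delta^4/12$.

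The constant $1/12$ together with the $3/2$ on the noise term suggests the authors used a weighted Young inequality, $\norm{a-\xi}^2\le (1+\eta)\norm{a}^2 + (1+1/\eta)\norm{\xi}^2$, with $\eta=2$. That gives $3\norm{a_i}^2 + \tfrac32\norm{\xi_i}^2$. With the bias bound this is $3\cdot(L^{\mathrm{H}})^2\delta^4/36 = (L^{\mathrm{H}})^2\delta^4/12$, and the noise part becomes $\tfrac32\,\mathbb{E}\norm{\xi_i}^2$. I will adopt this split rather than trying to exploit independence to kill cross terms, because the estimators need not be unbiased.

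\textbf{Noise term.} We have $\norm{\xi_i}^2 \le \frac{2}{4\delta^2}\bigl(\norm{\hat\beta^+-\beta^+}^2+\norm{\hat\beta^--\beta^-}^2\bigr)$, writing $\beta^\pm=\beta(\theta_t\pm\delta e_i)$. Applying \cref{item:identifiability} to each estimator gives $\mathbb{E}\norm{\xi_i}^2\le \sigma_{\max}^2/(\delta^2 b)$. Multiplying by $3/2$ gives exactly $3\sigma_{\max}^2/(2\delta^2 b)$.

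The main obstacle is purely bookkeeping: choosing the Young weight and the Taylor remainder form so the constants match the statement.
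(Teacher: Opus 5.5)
Your proof is correct and follows the paper's argument: a column-wise Frobenius bound, a Taylor bias bound of $L_{\beta,\theta}^{\mathrm{H}}\delta^2/6$ for the centered difference, and \cref{item:identifiability} applied to each perturbed estimator. The only cosmetic difference is that the paper uses the three-term inequality $\norm{a+b+c}^2\le 3(\norm{a}^2+\norm{b}^2+\norm{c}^2)$ instead of your weighted Young step with $\eta=2$ followed by a two-term split, and both give exactly the same coefficients ($3$ on the bias and $3/(4\delta^2)$ on each noise term).
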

\begin{proof}
    Fix an arbitrary $i$.
    For $\delta > 0$ and the $i$-th standard basis vector $e_i \in \mathbb{R}^n$, define $\theta_{t,i}^{+} \coloneqq \theta_t + \delta e_i$ and $\theta_{t,i}^{-} \coloneqq \theta_t - \delta e_i$.
    Since the Hessian of $\beta$ is Lipschitz continuous by \cref{prop:technical_conditions,asm:beta_theta}, we can apply the following Taylor expansions:
    \begin{equation}
        \beta(\theta_{t,i}^{+})
        =
        \beta(\theta_t)
        + \delta {\pdv{\beta}{\theta}}(\theta_t)e_i
        + \frac{\delta^2}{2}{\pdv[2]{\beta}{\theta}}(\theta_t)[e_i,e_i]
        + r_i^+,
        \qquad
        \norm{r_i^+}
        \le
        \frac{L_{\beta,\theta}^{\mathrm{H}}}{6}\delta^3,
        \label{eq:taylor_plus}
    \end{equation}
    and
    \begin{equation}
        \beta(\theta_{t,i}^{-})
        =
        \beta(\theta_t)
        - \delta {\pdv{\beta}{\theta}}(\theta_t)e_i
        + \frac{\delta^2}{2}{\pdv[2]{\beta}{\theta}}(\theta_t)[e_i,e_i]
        + r_i^-,
        \qquad
        \norm{r_i^-}
        \le
        \frac{L_{\beta,\theta}^{\mathrm{H}}}{6}\delta^3.
        \label{eq:taylor_minus}
    \end{equation}
    Subtracting \cref{eq:taylor_minus} from \cref{eq:taylor_plus} and applying the remainder bounds gives
    \begin{equation}
        \norm{{\pdv{\beta}{\theta}}(\theta_t) e_i
            - \frac{\beta(\theta_{t,i}^{+}) - \beta(\theta_{t,i}^{-})}{2\delta}}
        =     \norm{\frac{r_i^- - r_i^+}{2\delta}}
        \leq  \frac{1}{2\delta}\qty(\norm{r_i^+}+\norm{r_i^-})
        \leq  \frac{L_{\beta,\theta}^{\mathrm{H}}}{6}\delta^2.
        \label{eq:finite_difference_error}
    \end{equation}
    Using \cref{eq:finite_difference_error}, we can bound the error between the $i$-th column of the true Jacobian and the $i$-th column of the Jacobian estimator as follows:
    \begin{align*}
               & \norm{{\pdv{\beta}{\theta}}(\theta_t) e_i - \widehat{\pdv{\beta}{\theta}}(\theta_t) e_i}^2                                                                                           \\
        ={}    & \norm{
            {\pdv{\beta}{\theta}}(\theta_t) e_i
            - \frac{\beta(\theta_{t,i}^{+}) - \beta(\theta_{t,i}^{-})}{2\delta}
            + \frac{\beta(\theta_{t,i}^{+}) - \beta(\theta_{t,i}^{-})}{2\delta}
            - \frac{\hat\beta_{t,i}^{+} - \hat\beta_{t,i}^{-}}{2\delta}
        }^2
               &                                                                                            & (\scalebox{0.93}{\text{by \refLine{line:1st_estimate} of \cref{alg:performative_prediction_1st_order}}}) \\
        ={}    & \norm{
            {\pdv{\beta}{\theta}}(\theta_t) e_i
            - \frac{\beta(\theta_{t,i}^{+}) - \beta(\theta_{t,i}^{-})}{2\delta}
            + \frac{1}{2\delta}\qty(\beta(\theta_{t,i}^{+})-\hat\beta_{t,i}^{+})
            + \frac{1}{2\delta}\qty(-\beta(\theta_{t,i}^{-})+\hat\beta_{t,i}^{-})
        }^2                                                                                                                                                                                           \\
        \leq{} & 3\norm{
            {\pdv{\beta}{\theta}}(\theta_t) e_i
            - \frac{\beta(\theta_{t,i}^{+}) - \beta(\theta_{t,i}^{-})}{2\delta}
        }^2
        + \frac{3}{4\delta^2}\norm{\beta(\theta_{t,i}^{+})-\hat\beta_{t,i}^{+}}^2
        + \frac{3}{4\delta^2}\norm{\beta(\theta_{t,i}^{-})-\hat\beta_{t,i}^{-}}^2
               &                                                                                            & (\scalebox{0.63}{$\norm{a+b+c}^2 \leq 3(\norm{a}^2+\norm{b}^2+\norm{c}^2)$})            \\
        \leq{} & \frac{(L_{\beta,\theta}^{\mathrm{H}})^2\delta^4}{12}
        + \frac{3}{4\delta^2}\norm{\beta(\theta_t+\delta e_i)-\hat\beta_{t,i}^{+}}^2
        + \frac{3}{4\delta^2}\norm{\beta(\theta_t-\delta e_i)-\hat\beta_{t,i}^{-}}^2.
               &                                                                                            & (\text{by \cref{eq:finite_difference_error}})
    \end{align*}
    Finally, we obtain:
    \begin{align*}
               & \mathbb{E}\left[\norm{{\pdv{\beta}{\theta}}(\theta_t) - \widehat{\pdv{\beta}{\theta}}(\theta_t)}^2\right] \leq \mathbb{E}\left[\norm{{\pdv{\beta}{\theta}}(\theta_t) - \widehat{\pdv{\beta}{\theta}}(\theta_t)}_{\mathrm{F}}^2\right]         &                                      & (\norm{\cdot}_{\mathrm{F}} \scalebox{0.85}{\text{ is Frobenius norm}}) \\
        ={}    & \sum_{i=1}^n \mathbb{E}\left[\norm{{\pdv{\beta}{\theta}}(\theta_t) e_i - \widehat{\pdv{\beta}{\theta}}(\theta_t) e_i}^2\right] &                                      & (\text{by definition})                                \\
        \leq{} & \sum_{i=1}^n \qty(
        \frac{(L_{\beta,\theta}^{\mathrm{H}})^2\delta^4}{12}
        + \frac{3}{4\delta^2} \mathbb{E}\left[\norm{\beta(\theta_t+\delta e_i) - \hat\beta_{t,i}^{+}}^2\right]
        + \frac{3}{4\delta^2} \mathbb{E}\left[\norm{\beta(\theta_t-\delta e_i) - \hat\beta_{t,i}^{-}}^2\right]
        )      &                                                                                                                                & (\text{by above inequality})                                                                 \\
        \leq{} & n \qty(
        \frac{(L_{\beta,\theta}^{\mathrm{H}})^2\delta^4}{12}
        +
        \frac{3\sigma_{\max}^2}{2\delta^2 b}
        ).
               &                                                                                                                                & (\text{\cref{item:identifiability}})
    \end{align*}
    This concludes the proof.
\end{proof}

Using \cref{lem:estimated_Jacobian_error}, we prove \cref{prop:bound_L1_L2}.

\begin{proposition}[Restated]
    \propBoundLOneLTwo[equation]
\end{proposition}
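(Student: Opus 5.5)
We decompose the gradient error into three pieces and bound each using earlier results. Abbreviate $J \coloneqq \pdv{\beta}{\theta}(\theta_t)$, $\hat J \coloneqq \widehat{\pdv{\beta}{\theta}}(\theta_t)$, and $\beta_t \coloneqq \beta(\theta_t)$. Then
\begin{equation*}
    \nabla\mathcal{L}(\theta_t) - \widehat{\nabla\mathcal{L}}(\theta_t)
    = \underbrace{\qty(\mathbb{E}_{\mathcal{D}_{\beta_t}}[\nabla_\theta\ell] - \mathbb{E}_{\mathcal{D}_{\hat\beta_t}}[\nabla_\theta\ell])}_{a}
    + \underbrace{J^\top\qty(G(\theta_t;\beta_t) - G(\theta_t;\hat\beta_t))}_{b}
    + \underbrace{(J-\hat J)^\top G(\theta_t;\hat\beta_t)}_{c}.
\end{equation*}
We apply $\norm{a+b+c}^2 \le 3(\norm{a}^2+\norm{b}^2+\norm{c}^2)$ and take expectations. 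The factor $3$ in both $C_1$ and $C_2$ matches this split.

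\emph{Term $a$.} Since $\hat\beta_t\in B$ and $\beta_t = \beta(\theta_t)$, \cref{lem:grad_ell_lip_beta} gives $\norm{a}\le L_{\ell,\theta}^{\mathrm{Lip}}\sqrt{I_{\max}}\norm{\hat\beta_t-\beta_t}$. Squaring and applying \cref{item:identifiability} yields $\mathbb{E}\norm{a}^2 \le (L_{\ell,\theta}^{\mathrm{Lip}})^2 I_{\max}\sigma_{\max}^2/b$.

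\emph{Term $b$.} By submultiplicativity, \cref{asm:beta_theta} (so $\norm{J}\le L_{\beta,\theta}^{\mathrm{Lip}}$), and \cref{asm:G_Lipschitz} with $\theta=\theta'$, we get $\norm{b}\le L_{\beta,\theta}^{\mathrm{Lip}}L_{G,\beta}^{\mathrm{Lip}}\norm{\beta_t-\hat\beta_t}$. This gives the same $\sigma_{\max}^2/b$ rate. Together, $3(\mathbb{E}\norm{a}^2+\mathbb{E}\norm{b}^2)\le C_1/b$.

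\emph{Term $c$.} Here $\norm{c}\le\norm{J-\hat J}\,\norm{G(\theta_t;\hat\beta_t)}\le \sqrt{M_\ell I_{\max}}\,\norm{J-\hat J}$ by \cref{lem:G_norm_bound}, which holds uniformly over $\hat\beta_t\in B$. This uniformity is convenient: it removes any concern about dependence between $\hat\beta_t$ and the perturbed samples, which are in any case drawn independently. \Cref{lem:estimated_Jacobian_error}, applicable since $\theta_t\in\Theta_\delta$ by the projection, then gives
\begin{equation*}
    3\,\mathbb{E}\norm{c}^2 \le 3M_\ell I_{\max}\, n\qty(\frac{(L_{\beta,\theta}^{\mathrm{H}})^2\delta^4}{12}+\frac{3\sigma_{\max}^2}{2\delta^2 b}).
\end{equation*}

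The final step is to substitute $\delta=b^{-1/6}$. This makes $\delta^4=b^{-2/3}$ and $1/(\delta^2 b)=b^{1/3}/b=b^{-2/3}$, so both pieces scale as $b^{-2/3}$ and together give exactly $nC_2/b^{2/3}$. Adding this to the $C_1/b$ bound completes the argument.

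The main point to be careful about is term $c$. Its bound must not depend on the random $\hat\beta_t$ in a way that breaks the expectation, and the uniform bound from \cref{lem:G_norm_bound} resolves this. The expectations should also be read as conditional on $\theta_t$ and then averaged; \cref{item:identifiability} is uniform in $\theta$, so this is harmless.
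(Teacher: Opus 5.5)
Your proposal is correct and follows the paper's proof essentially step for step. It uses the same three-term split: the paper first groups the error into the $\nabla_1$ and $\nabla_2$ parts and then applies $(x+y+z)^2\le 3(x^2+y^2+z^2)$ to the scalar bounds. It relies on the same ingredients (\cref{lem:grad_ell_lip_beta}, \cref{asm:G_Lipschitz}, \cref{lem:G_norm_bound}, \cref{item:identifiability}, \cref{lem:estimated_Jacobian_error}) and the same substitution $\delta=b^{-1/6}$, arriving at $C_1/b + nC_2/b^{2/3}$.
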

\begin{proof}
    We can decompose the norm to be bounded as follows:
    \begin{align}
               & \norm{\nabla \mathcal{L}(\theta_t) - \widehat{\nabla\mathcal{L}}(\theta_t)}^2 \notag                                                                                                                                                                \\
        ={}    & \norm{\qty(\widehat{\nabla_1\mathcal{L}} (\theta_t) - \nabla_1\mathcal{L} (\theta_t)) + \qty(\widehat{\nabla_2\mathcal{L}} (\theta_t) - \nabla_2\mathcal{L} (\theta_t))}^2 \notag &  & (\text{by \cref{eq:nabla_L}})                                \\
        \leq{} & \qty( \norm{\nabla_1\mathcal{L}(\theta_t) - \widehat{\nabla_1\mathcal{L}}(\theta_t)} + \norm{\nabla_2\mathcal{L}(\theta_t) - \widehat{\nabla_2\mathcal{L}}(\theta_t)} )^2.        &  & (\text{triangle inequality})\label{eq:bound_L1_L2_two_terms}
    \end{align}
    We first bound the term $\norm{\nabla_1\mathcal{L}(\theta_t) - \widehat{\nabla_1\mathcal{L}}(\theta_t)}$ in \cref{eq:bound_L1_L2_two_terms}.
    The estimate $\hat\beta_t$ computed at \refLine{line:1st_estimate} of \cref{alg:performative_prediction_1st_order} belongs to $B$, so \cref{lem:grad_ell_lip_beta} gives
    \begin{align}
        \norm{\nabla_1\mathcal{L}(\theta_t) - \widehat{\nabla_1\mathcal{L}}(\theta_t)}
        ={}    & \norm{\mathbb{E}_{z \sim \mathcal{D}_{\beta(\theta_t)}}\qty[\nabla_{\theta} \ell(z; \theta_t)] - \mathbb{E}_{z \sim \mathcal{D}_{\hat\beta_t}}\qty[\nabla_{\theta} \ell(z; \theta_t)]} &  & (\text{by \cref{eq:nabla_L1}}) \notag                                   \\
        \leq{} & L_{\ell,\theta}^{\mathrm{Lip}} \sqrt{I_{\max}} \norm{\beta(\theta_t) - \hat\beta_t}.                                                                                                   &  & (\text{\cref{lem:grad_ell_lip_beta}}) \label{eq:bound_L1_L2_first_term}
    \end{align}
    We next bound the term $\norm{\nabla_2\mathcal{L}(\theta_t) - \widehat{\nabla_2\mathcal{L}}(\theta_t)}$ in \cref{eq:bound_L1_L2_two_terms}.
    \begin{align}
               & \norm{\nabla_2\mathcal{L}(\theta_t) - \widehat{\nabla_2\mathcal{L}}(\theta_t)} \notag                                                                                                                                                                                                                                                                                                                               \\
        ={}    & \norm{{\pdv{\beta}{\theta}}(\theta_t)^\top G(\theta_t;\beta(\theta_t)) - \widehat{\pdv{\beta}{\theta}}(\theta_t)^\top G(\theta_t;\hat\beta_t)}                                                                                &                                                                 & (\scalebox{0.75}{\text{by \cref{eq:nabla_L2} and \refLine{line:1st_grad} of \cref{alg:performative_prediction_1st_order}}}) \notag \\
        \leq{} & \norm{{\pdv{\beta}{\theta}}(\theta_t)^\top \qty(G(\theta_t;\beta(\theta_t)) - G(\theta_t;\hat\beta_t))} + \norm{\qty({\pdv{\beta}{\theta}}(\theta_t) - \widehat{\pdv{\beta}{\theta}}(\theta_t))^\top G(\theta_t;\hat\beta_t)}
               &                                                                                                                                                                                                                               & (\text{triangle inequality})
        \notag                                                                                                                                                                                                                                                                                                                                                                                                                       \\
        \leq{} & \norm{{\pdv{\beta}{\theta}}(\theta_t)} \norm{G(\theta_t;\beta(\theta_t))-G(\theta_t;\hat\beta_t)} + \norm{{\pdv{\beta}{\theta}}(\theta_t)-\widehat{\pdv{\beta}{\theta}}(\theta_t)} \norm{G(\theta_t;\hat\beta_t)}.
               &                                                                                                                                                                                                                               & (\text{submultiplicativity}) \label{eq:bound_L1_L2_second_term}
    \end{align}
    For the first norm in the first term of \cref{eq:bound_L1_L2_second_term}, \cref{asm:beta_theta} and differentiability of $\beta$ imply the following bound \citep[Proposition 2.2.1]{cobzasLipschitzFunctions2019}:
    \begin{equation}
        \sup_{\theta \in \Theta} \norm{{\pdv{\beta}{\theta}}(\theta)} \le L_{\beta,\theta}^{\mathrm{Lip}}. \label{eq:Jacobian_norm_bounded_by_Lipschitz}
    \end{equation}
    Applying \cref{eq:Jacobian_norm_bounded_by_Lipschitz,asm:G_Lipschitz,lem:G_norm_bound} to the other three norms in \cref{eq:bound_L1_L2_second_term} gives
    \begin{equation}
        \norm{\nabla_2\mathcal{L}(\theta_t) - \widehat{\nabla_2\mathcal{L}}(\theta_t)}
        \leq   L_{\beta,\theta}^{\mathrm{Lip}} L_{G,\beta}^{\mathrm{Lip}} \norm{\beta(\theta_t)-\hat\beta_t} + \sqrt{M_\ell I_{\max}} \norm{{\pdv{\beta}{\theta}}(\theta_t)-\widehat{\pdv{\beta}{\theta}}(\theta_t)}. \label{eq:bound_L1_L2_second_term_final}
    \end{equation}

    Substituting the bounds in \cref{eq:bound_L1_L2_first_term,eq:bound_L1_L2_second_term_final} into \cref{eq:bound_L1_L2_two_terms}, and using $\norm{a+b+c}^2 \leq 3(\norm{a}^2 + \norm{b}^2 + \norm{c}^2)$, we can derive
    \begin{align}
               & \norm{\nabla \mathcal{L}(\theta_t) - \widehat{\nabla\mathcal{L}}(\theta_t)}^2\notag                                                                                                                                                                                                                        \\
        \leq{} & \qty(L_{\ell,\theta}^{\mathrm{Lip}} \sqrt{I_{\max}} \norm{\beta(\theta_t) - \hat\beta_t} + L_{\beta,\theta}^{\mathrm{Lip}} L_{G,\beta}^{\mathrm{Lip}} \norm{\beta(\theta_t)-\hat\beta_t} + \sqrt{M_\ell I_{\max}} \norm{{\pdv{\beta}{\theta}}(\theta_t)-\widehat{\pdv{\beta}{\theta}}(\theta_t)})^2 \notag \\
        \leq{} & 3\qty(\qty(L_{\ell,\theta}^{\mathrm{Lip}})^2 I_{\max} + \qty(L_{\beta,\theta}^{\mathrm{Lip}} L_{G,\beta}^{\mathrm{Lip}})^2) \norm{\beta(\theta_t)-\hat\beta_t}^2 + 3 M_\ell I_{\max} \norm{{\pdv{\beta}{\theta}}(\theta_t)-\widehat{\pdv{\beta}{\theta}}(\theta_t)}^2,
        \label{eq:bound_L1_L2_final}
    \end{align}
    By taking the expectation of both sides of \cref{eq:bound_L1_L2_final}, we obtain
    \begin{align*}
                          & \mathbb{E}\left[\norm{\nabla \mathcal{L}(\theta_t) - \widehat{\nabla\mathcal{L}}(\theta_t)}^2\right]                                                                                                                                                                                                                                                                                                                                                                                       \\
        \leq{}            & \mathrlap{3\qty(\qty(L_{\ell,\theta}^{\mathrm{Lip}})^2 I_{\max} + \qty(L_{\beta,\theta}^{\mathrm{Lip}} L_{G,\beta}^{\mathrm{Lip}})^2) \mathbb{E}\left[\norm{\beta(\theta_t)-\hat\beta_t}^2\right] + 3 M_\ell I_{\max} \mathbb{E}\left[\norm{{\pdv{\beta}{\theta}}(\theta_t)-\widehat{\pdv{\beta}{\theta}}(\theta_t)}^2\right] \qquad (\text{by \cref{eq:bound_L1_L2_final}})}                                                                                                              \\
        \leq{}            & 3\qty(\qty(L_{\ell,\theta}^{\mathrm{Lip}})^2 I_{\max} + \qty(L_{\beta,\theta}^{\mathrm{Lip}} L_{G,\beta}^{\mathrm{Lip}})^2) \frac{\sigma_{\max}^2}{b}
        + 3 M_\ell I_{\max} \cdot n \qty(
        \frac{(L_{\beta,\theta}^{\mathrm{H}})^2\delta^4}{12}
        +
        \frac{3\sigma_{\max}^2}{2\delta^2 b}
        )                 &                                                                                                                                                                                                                                                                                                                                                                               & (\scalebox{0.82}{\text{\cref{asm:main} and \cref{lem:estimated_Jacobian_error}}})                                           \\
        ={}               & 3\qty(\qty(L_{\ell,\theta}^{\mathrm{Lip}})^2 I_{\max} + \qty(L_{\beta,\theta}^{\mathrm{Lip}} L_{G,\beta}^{\mathrm{Lip}})^2) \frac{\sigma_{\max}^2}{b}
        + n \cdot
        3 M_\ell I_{\max}
        \qty(
        \frac{(L_{\beta,\theta}^{\mathrm{H}})^2}{12}
        +
        \frac{3\sigma_{\max}^2}{2}
        )
        \frac{1}{b^{2/3}} &                                                                                                                                                                                                                                                                                                                                                                               & (\delta = b^{-1/6})                                                                                        \\
        ={}               & \frac{C_1}{b} + \frac{n C_2}{b^{2/3}},                                                                                                                                                                                                                                                                                                                                        &                                                                  & (\scalebox{0.95}{\text{definitions of $C_1$ and $C_2$}})
    \end{align*}
    which concludes the proof.
\end{proof}

\section{
    \texorpdfstring{\scalebox{0.93}{Sphere-Smoothed Jacobian Estimation and Proof of Proposition 5}}{Sphere-Smoothed Jacobian Estimation and Proof of Proposition 5}
}
\label{app:sphere_smoothing}

In this section, we prove \cref{prop:sphere_smoothing}.

We first describe the sphere-smoothed variant of the Jacobian estimator used in \cref{alg:performative_prediction_1st_order}.
The coordinate-wise Jacobian estimator in \cref{alg:performative_prediction_1st_order} has essentially the same structure as a coordinate-wise two-point zeroth-order gradient estimator \citep{hikimaZerothorderGradientEstimators2025}.
More precisely, for the $i$-th column of the Jacobian of $\beta$, the estimator
\begin{equation*}
    \frac{\hat\beta_{t,i}^{+}-\hat\beta_{t,i}^{-}}{2\delta}
\end{equation*}
is a central finite-difference estimator of the corresponding partial derivative.
Thus, \cref{lem:estimated_Jacobian_error} coincides with the finite-difference error bound in \citet[Lemma 3]{hikimaZerothorderGradientEstimators2025}, up to notation and constants.
This observation suggests that the coordinate-wise finite-difference estimator can be replaced by a random-direction estimator based on smoothing on a sphere, which exhibits a more favorable dependence on the dimension $n$ \citep{hikimaZerothorderGradientEstimators2025}.

For this variant, define the sphere-shrunk feasible set by
\begin{equation}
    \Theta_\delta^{\mathrm{sp}}
    \coloneqq
    \left\{
    \theta \in \Theta
    \mathrel{}\middle|\mathrel{}
    \theta \pm \delta u \in \Theta
    \text{ for every } u \in \mathbb{S}^{n-1}
    \right\}.
    \label{eq:sphere_shrunk_feasible_set}
\end{equation}

At iteration $t$, draw independent random directions
\begin{equation*}
    u_t^1,\ldots,u_t^{b_{\mathrm{sp}}} \sim \mathrm{Unif}(\mathbb{S}^{n-1}),
\end{equation*}
where $b_{\mathrm{sp}} \in \mathbb{N}$ is the number of random directions and $\mathbb{S}^{n-1}$ denotes the unit sphere in $\mathbb{R}^n$.
For each direction $u_t^i$ ($1 \leq i \leq b_{\mathrm{sp}}$), deploy the two perturbed models and collect i.i.d.\ samples from the corresponding induced distributions ($j = 1,\ldots,b_{\min}$):
\begin{equation*}
    z_{t,i,j}^{+} \sim \mathcal{D}_{\beta(\theta_t+\delta u_t^i)}, \qquad z_{t,i,j}^{-} \sim \mathcal{D}_{\beta(\theta_t-\delta u_t^i)}.
\end{equation*}
Conditional on the sampled directions, the datasets collected for different directions and signs are mutually independent.
Here, $b_{\min} \in \mathbb{N}$ is the minimum number of samples introduced by \cref{item:identifiability}.
Then, we estimate the distribution parameters from these $b_{\min}$ samples:
\begin{equation*}
    \hat\beta_{t,i}^{+} = \Pred\left(\{z_{t,i,j}^{+}\}_{j=1}^{b_{\min}}\right),
    \qquad
    \hat\beta_{t,i}^{-} = \Pred\left(\{z_{t,i,j}^{-}\}_{j=1}^{b_{\min}}\right).
\end{equation*}
The sphere-smoothed Jacobian estimator is then defined by
\begin{equation}
    \widehat{\pdv{\beta}{\theta}}(\theta_t)
    =
    \frac{n}{b_{\mathrm{sp}}}
    \sum_{i=1}^{b_{\mathrm{sp}}}
    \frac{\hat\beta_{t,i}^{+}-\hat\beta_{t,i}^{-}}{2\delta}
    (u_t^i)^\top.
    \label{eq:sphere_jacobian_estimator}
\end{equation}
All other parts of the algorithm are unchanged, except that the projection is onto $\Theta_\delta^{\mathrm{sp}}$.

We impose the following additional assumptions on the sphere-smoothed Jacobian estimator.
\begin{assumption}\label{asm:sphere_smoothing}
    The sphere-shrunk feasible set $\Theta_\delta^{\mathrm{sp}}$ in \cref{eq:sphere_shrunk_feasible_set} is nonempty.
    The estimator $\hat\beta$ of the distribution parameter is conditionally unbiased given the model parameter $\theta$:
    \begin{equation*}
        \mathbb{E}[\hat\beta - \beta(\theta) \mid \theta] = 0.
    \end{equation*}
    For every $\theta, \theta' \in \Theta$, $b \geq b_{\min}$, and $1 \leq j \leq m$, the following component-wise strengthenings of \cref{item:identifiability} and \cref{asm:beta_theta} hold:
    \begin{equation*}
        \mathbb{E}\left[\abs{\hat\beta_j - \beta(\theta)_j}^2 \mid \theta\right] \le \frac{\sigma_{\max,j}^2}{b},
        \qquad
        \norm{\nabla_\theta^2 \beta_j(\theta)-\nabla_\theta^2 \beta_j(\theta')} \le L_{\beta,\theta,j}^{\mathrm{H}} \norm{\theta-\theta'}.
    \end{equation*}
    Here, $(\cdot)_j$ denotes the $j$-th component of a vector, and the constants $\sigma_{\max,j}^2$ and $L_{\beta,\theta,j}^{\mathrm{H}}$ satisfy
    \begin{equation}
        \sigma_{\max}^2 = \sum_{j=1}^m \sigma_{\max,j}^2,
        \qquad
        L_{\beta,\theta}^{\mathrm{H}} = \sqrt{\sum_{j=1}^m (L_{\beta,\theta,j}^{\mathrm{H}})^2}.
        \label{eq:componentwise_constants}
    \end{equation}
\end{assumption}

By \cref{item:Theta_compact_convex}, $\Theta_\delta^{\mathrm{sp}}$ is closed and convex.
Throughout this section, $\Proj$ denotes the Euclidean projection onto $\Theta_\delta^{\mathrm{sp}}$.
Accordingly, $\mathcal{G}_\alpha$ denotes the gradient mapping in \cref{eq:true-gradient-mapping} defined using this projection.

We first analyze the estimation error of the sphere-smoothed Jacobian estimator.
The following lemma gives the analogue of \citep[Lemma 8]{hikimaZerothorderGradientEstimators2025} for the response-map Jacobian.

\begin{lemma}
    \label{lem:estimated_Jacobian_error_sphere}
    Suppose that \cref{asm:main,asm:sphere_smoothing} hold.
    Let $\theta_t \in \Theta_\delta^{\mathrm{sp}}$, and let $\widehat{\pdv{\beta}{\theta}}(\theta_t)$ be defined by \cref{eq:sphere_jacobian_estimator}.
    Then,
    \begin{equation}
        \mathbb{E}\left[
            \norm{
                {\pdv{\beta}{\theta}}(\theta_t)
                -
                \widehat{\pdv{\beta}{\theta}}(\theta_t)
            }^2
            \right] \leq
        \frac{3\sigma_{\max}^2 n^2}{\delta^2 b_{\mathrm{sp}} b_{\min}}
        +
        3(L_{\beta,\theta}^{\mathrm{H}})^2\delta^4
        +
        \frac{(L_{\beta,\theta}^{\mathrm{H}})^2\delta^4 n^2}{6b_{\mathrm{sp}}}
        +
        \frac{18 n^2}{b_{\mathrm{sp}}(n+2)}
        \norm{{\pdv{\beta}{\theta}}(\theta_t)}_{\mathrm{F}}^2.
        \label{eq:sphere_jacobian_error_bound}
    \end{equation}
\end{lemma}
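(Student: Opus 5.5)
We split the estimator error into a \emph{noise} part and a \emph{deterministic finite-difference} part, and control each with the standard sphere-smoothing calculus of \citet{hikimaZerothorderGradientEstimators2025}. Write $J \coloneqq {\pdv{\beta}{\theta}}(\theta_t)$ and, for each direction $u_i \coloneqq u_t^i$, define the noise $\xi_i^\pm \coloneqq \hat\beta_{t,i}^\pm - \beta(\theta_t \pm \delta u_i)$ and the exact difference quotient $d_i \coloneqq (\beta(\theta_t+\delta u_i) - \beta(\theta_t-\delta u_i))/(2\delta)$. Then
\begin{equation*}
    \widehat{\pdv{\beta}{\theta}}(\theta_t) - J
    = \underbrace{\frac{n}{b_{\mathrm{sp}}}\sum_{i} \frac{\xi_i^+ - \xi_i^-}{2\delta} u_i^\top}_{A}
    + \underbrace{\frac{n}{b_{\mathrm{sp}}}\sum_{i} \qty(d_i - J u_i) u_i^\top}_{B}
    + \underbrace{\frac{n}{b_{\mathrm{sp}}}\sum_{i} J u_i u_i^\top - J}_{C}.
\end{equation*}
We bound the operator norm by the Frobenius norm and use $\norm{a+b+c}^2\le 3(\cdots)$; the three resulting pieces match the four terms of \cref{eq:sphere_jacobian_error_bound}, with $B$ contributing the two $\delta^4$ terms.

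\emph{Term $A$.} Conditional on the directions, the $\xi_i^\pm$ are independent. By \cref{asm:sphere_smoothing} they are also mean zero, so cross terms vanish. Since $\norm{u_i}=1$, we have $\norm{v u_i^\top}_{\mathrm F} = \norm{v}$. Hence
\begin{equation*}
    \mathbb{E}\norm{A}_{\mathrm F}^2 = \frac{n^2}{b_{\mathrm{sp}}^2}\sum_i \frac{\mathbb{E}\norm{\xi_i^+}^2 + \mathbb{E}\norm{\xi_i^-}^2}{4\delta^2} \le \frac{n^2 \sigma_{\max}^2}{2\delta^2 b_{\mathrm{sp}} b_{\min}},
\end{equation*}
using \cref{item:identifiability} with $b=b_{\min}$. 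After the factor $3$ this is within the first term.

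\emph{Term $B$.} We apply the Taylor argument of \cref{lem:estimated_Jacobian_error} componentwise along the direction $u_i$: $\abs{(d_i - J u_i)_j} \le L_{\beta,\theta,j}^{\mathrm H}\delta^2/6$. Summing over $j$ via \cref{eq:componentwise_constants} gives $\norm{d_i - J u_i} \le L_{\beta,\theta}^{\mathrm H}\delta^2/6$. To obtain the separate $\delta^4$ (no $n^2$) and $\delta^4 n^2/b_{\mathrm{sp}}$ terms, we split $B$ into its mean $\bar B \coloneqq n\,\mathbb{E}_u[(d(u) - Ju)u^\top]$ plus a centered average of i.i.d.\ terms. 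The variance of the centered part is at most $\frac{n^2}{b_{\mathrm{sp}}}\mathbb{E}\norm{d(u)-Ju}^2 \le \frac{n^2 (L^{\mathrm H}_{\beta,\theta})^2\delta^4}{36\, b_{\mathrm{sp}}}$.

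For the bias $\bar B$, a direct bound gives only $n\cdot\delta^2/6$, which is too weak. Instead we use the sphere-smoothing identity: $n\,\mathbb{E}_u[(\beta(\theta+\delta u) - \beta(\theta - \delta u))u^\top]/(2\delta)$ equals the Jacobian of the ball-smoothed map $\beta^\delta(\theta) = \mathbb{E}_{v\sim\mathrm{Unif}(\mathbb{B})}\beta(\theta+\delta v)$. Then $\norm{\nabla\beta_j^\delta - \nabla\beta_j} \le \mathbb{E}\norm{\nabla\beta_j(\theta+\delta v)-\nabla\beta_j(\theta) - \nabla^2\beta_j(\theta)\delta v}$, since the linear term averages out. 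This is $\le L_{\beta,\theta,j}^{\mathrm H}\delta^2/2$, which is dimension-free. Collecting constants, with the factor $3$ split between the bias and variance parts, gives the two $\delta^4$ terms.

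\emph{Term $C$.} Since $\mathbb{E}[uu^\top]=I/n$, $C$ is a mean-zero average. Its second moment is $\frac{1}{b_{\mathrm{sp}}}\mathbb{E}\norm{nJuu^\top - J}_{\mathrm F}^2 \le \frac{n^2}{b_{\mathrm{sp}}}\mathbb{E}\norm{Ju}^2 \le \frac{n}{b_{\mathrm{sp}}}\norm{J}_{\mathrm F}^2$. This already fits within $18n^2/(b_{\mathrm{sp}}(n+2))\norm{J}_{\mathrm F}^2$, since $n/(n+2)\cdot n\ge n/3$. Alternatively, one can use the fourth-moment identity $\mathbb{E}[(u^\top a)^2 uu^\top] = (\norm{a}^2 I + 2aa^\top)/(n(n+2))$ to match the stated form.

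\emph{Main obstacle.} We expect the delicate step to be the bias of $B$: avoiding a spurious factor $n$ requires the smoothing identity via Stokes' theorem on the ball rather than a naive triangle inequality.
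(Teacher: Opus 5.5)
Your proof is correct, but it reaches the bound by a different route than the paper.

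\textbf{The paper's route.} The paper's proof is a reduction. It passes to the Frobenius norm and splits it into the $m$ rows $\pdv{\beta}{\theta}(\theta_t)^\top\bar e_j$. It notes that each row of \cref{eq:sphere_jacobian_estimator} is the standard two-point sphere estimator of $\nabla\beta_j(\theta_t)$, applies \citep[Lemma 8]{hikimaZerothorderGradientEstimators2025} row by row, and sums via \cref{eq:componentwise_constants}. All four terms and their constants are inherited from that external lemma.

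\textbf{Your route.} You prove the bound from scratch at the matrix level with the split $A+B+C$:
\begin{itemize}
\item conditional independence and unbiasedness control the noise term $A$;
\item a bias--variance split of the finite-difference term $B$, with the ball-smoothing (divergence-theorem) identity, makes the bias dimension-free;
\item $\mathbb{E}[uu^\top]=I/n$ controls the sampling term $C$.
\end{itemize}

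\textbf{Constants.} The constants check out, so each of your terms is dominated by the corresponding stated one:
\begin{itemize}
\item the noise term gives $\frac{3n^2\sigma_{\max}^2}{2\delta^2 b_{\mathrm{sp}}b_{\min}}$;
\item the finite-difference term gives at most $\frac32(L_{\beta,\theta}^{\mathrm{H}})^2\delta^4$ and $\frac{n^2(L_{\beta,\theta}^{\mathrm{H}})^2\delta^4}{6b_{\mathrm{sp}}}$, even with a crude factor $2$ in the bias--variance split;
\item the sampling term gives $\frac{3n}{b_{\mathrm{sp}}}\norm{J}_{\mathrm{F}}^2\le\frac{18n^2}{b_{\mathrm{sp}}(n+2)}\norm{J}_{\mathrm{F}}^2$.
\end{itemize}

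\textbf{What each route buys.} The paper's route buys brevity. Yours buys a self-contained argument with sharper, transparent constants. It also uses less of \cref{asm:sphere_smoothing}. You bound $A$ as a matrix using only the aggregate bound of \cref{item:identifiability} plus conditional unbiasedness. So you never use the componentwise variances $\sigma_{\max,j}^2$, which the row-wise black-box reduction relies on to avoid an extra factor $m$.

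\textbf{Two points to state explicitly when writing it up.}
\begin{itemize}
\item The Hessian-Lipschitz bounds are assumed only on $\Theta$. They apply on the whole ball $\theta_t+\delta\mathbb{B}$ because $\theta_t\in\Theta_\delta^{\mathrm{sp}}$ and $\Theta$ is convex.
\item The interchange $\nabla\beta_j^\delta(\theta_t)=\mathbb{E}_v[\nabla\beta_j(\theta_t+\delta v)]$ is justified because $\beta$ is $C^1$ on a neighborhood of that compact ball.
\end{itemize}
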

\begin{proof}
    Let $\bar{e}_j$ denote the $j$-th standard basis vector ($1 \leq j \leq m$) in the distribution parameter space $\mathbb{R}^m$.
    The proof is obtained by applying \citep[Lemma 8]{hikimaZerothorderGradientEstimators2025} component-wise to the vector-valued map $\beta$:
    \begin{align*}
                          & \mathbb{E}\left[
            \norm{
                {\pdv{\beta}{\theta}}(\theta_t)
                -
                \widehat{\pdv{\beta}{\theta}}(\theta_t)
            }^2\right] \leq \mathbb{E}\left[
            \norm{
                {\pdv{\beta}{\theta}}(\theta_t)
                -
                \widehat{\pdv{\beta}{\theta}}(\theta_t)
            }_{\mathrm{F}}^2
        \right]           &                  & (\norm{\cdot}_{\mathrm{F}} \text{ is Frobenius norm}) \\
        ={}               &
        \sum_{j=1}^m \mathbb{E}\left[
            \norm{
                {\pdv{\beta}{\theta}}(\theta_t)^\top \bar{e}_j
                -
                \frac{n}{b_{\mathrm{sp}}}
                \sum_{i=1}^{b_{\mathrm{sp}}}
                \frac{(\hat\beta_{t,i}^{+}-\hat\beta_{t,i}^{-})_j}{2\delta} u_{t}^{i}
            }^2
        \right]           &                  & (\text{\scalebox{0.9}{linearity of the expectation}})                                            \\
        \leq{}            &
        \sum_{j=1}^m \qty(
        \frac{3\sigma_{\max,j}^2 n^2}{\delta^2 b_{\mathrm{sp}} b_{\min}}
        +
        3(L_{\beta,\theta,j}^{\mathrm{H}})^2\delta^4
        +
        \frac{(L_{\beta,\theta,j}^{\mathrm{H}})^2\delta^4 n^2}{6b_{\mathrm{sp}}}
        +
        \frac{18 n^2}{b_{\mathrm{sp}}(n+2)}
        \norm{
            {\pdv{\beta}{\theta}}(\theta_t)^\top \bar{e}_j
        }^2
        )                 &                  & (\text{\scalebox{0.6}{\citep[Lemma 8]{hikimaZerothorderGradientEstimators2025}}}) \\
        \leq{}            &
        \frac{3\sigma_{\max}^2 n^2}{\delta^2 b_{\mathrm{sp}} b_{\min}}
        +
        3(L_{\beta,\theta}^{\mathrm{H}})^2\delta^4
        +
        \frac{(L_{\beta,\theta}^{\mathrm{H}})^2\delta^4 n^2}{6b_{\mathrm{sp}}}
        +
        \frac{18 n^2}{b_{\mathrm{sp}}(n+2)}
        \norm{
            {\pdv{\beta}{\theta}}(\theta_t)
        }_{\mathrm{F}}^2, &                  & (\text{by \cref{eq:componentwise_constants}})
    \end{align*}
    which concludes the proof.
\end{proof}

We now state and prove the formal version of \cref{prop:sphere_smoothing}.
\begin{proposition}[formal]
    Suppose that \cref{asm:main,asm:sphere_smoothing} hold.
    Consider the variant of \cref{alg:performative_prediction_1st_order} that uses the sphere-smoothed Jacobian estimator in \cref{eq:sphere_jacobian_estimator} and projection onto $\Theta_\delta^{\mathrm{sp}}$, with stepsize $0 < \alpha \leq 1/\Lsm$.
    Let $C_1$ be defined as in \cref{prop:bound_L1_L2}, and define
    \begin{equation*}
        C'_2 \coloneqq 3M_\ell I_{\max} \qty(\frac{3\sigma_{\max}^2}{b_{\min}} + 3(L_{\beta,\theta}^{\mathrm{H}})^2 + \frac{(L_{\beta,\theta}^{\mathrm{H}})^2}{6} + 18 (L_{\beta,\theta}^{\mathrm{Lip}})^2).
    \end{equation*}
    For any $\epsilon > 0$, choose $b_{\mathrm{sp}} = n b$ and $\delta = n^{1/6} b^{-1/6}$, initialize at $\theta_0 \in \Theta_\delta^{\mathrm{sp}}$, and choose $T$ and $b$ as the smallest integers satisfying
    \begin{equation}
        T \geq \frac{6\bigl(\mathcal{L}(\theta_0)-\mathcal{L}_*\bigr)} {\alpha\epsilon^2},
        \qquad
        b \geq \max\qty(
        b_{\min},
        n,
        \frac{3C_1}{\epsilon^2},
        \frac{n (3 C'_2)^{1.5}}{\epsilon^3}
        ).
        \label{eq:sphere_parameter_choice}
    \end{equation}
    Then the sphere-smoothed variant satisfies
    \begin{equation*}
        \frac{1}{T} \sum_{t=0}^{T-1} \mathbb{E}\left[\norm{\mathcal{G}_\alpha(\theta_t)}^2\right] \leq \epsilon^2,
    \end{equation*}
    and its total sample complexity is $\order{n^2 \epsilon^{-5}}$.
\end{proposition}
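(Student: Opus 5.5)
The proof will mirror that of \cref{thm:main}: I will apply \cref{prop:bound_gradient_norm} and replace \cref{prop:bound_L1_L2} with a sphere-smoothed analogue. First, I will check that \cref{prop:bound_gradient_norm} still applies once $\Theta_\delta$ is replaced by $\Theta_\delta^{\mathrm{sp}}$. Its proof used only three facts: the projection is onto a nonempty closed convex set containing the iterates (\cref{lem:projection-theorem,lem:gradmap-main-bound}), $\mathcal{L}$ is $\Lsm$-smooth on $\Theta$ (\cref{prop:PL_LSmooth}), and $\mathcal{L}_*>-\infty$. All three still hold, since $\Theta_\delta^{\mathrm{sp}}\subseteq\Theta$ is closed, convex, and nonempty by \cref{asm:sphere_smoothing}. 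Hence
\begin{equation*}
\frac1T\sum_{t=0}^{T-1}\mathbb{E}\bigl[\norm{\mathcal{G}_\alpha(\theta_t)}^2\bigr]\le\frac{2(\mathcal{L}(\theta_0)-\mathcal{L}_*)}{\alpha T}+\frac1T\sum_{t=0}^{T-1}\mathbb{E}\bigl[\norm{\nabla\mathcal{L}(\theta_t)-\widehat{\nabla\mathcal{L}}(\theta_t)}^2\bigr].
\end{equation*}

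Second, I will rerun the decomposition in the proof of \cref{prop:bound_L1_L2} up to \cref{eq:bound_L1_L2_final}. That step is purely deterministic and does not depend on how the Jacobian was estimated. It gives
\begin{equation*}
\mathbb{E}\bigl[\norm{\nabla\mathcal{L}(\theta_t)-\widehat{\nabla\mathcal{L}}(\theta_t)}^2\bigr]\le\frac{C_1}{b}+3M_\ell I_{\max}\,\mathbb{E}\Bigl[\norm{{\pdv{\beta}{\theta}}(\theta_t)-\widehat{\pdv{\beta}{\theta}}(\theta_t)}^2\Bigr].
\end{equation*}
Here the $\hat\beta_t$ term still uses $b$ samples, and $\theta_t\in\Theta_\delta^{\mathrm{sp}}$. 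I will then plug in \cref{lem:estimated_Jacobian_error_sphere}. To handle $\norm{{\pdv{\beta}{\theta}}}_{\mathrm{F}}^2$, I will use $\norm{\cdot}_{\mathrm{F}}^2\le \min(m,n)\norm{\cdot}^2$ together with \cref{eq:Jacobian_norm_bounded_by_Lipschitz}. I expect this to be the delicate point. The constant $C'_2$ contains $18(L_{\beta,\theta}^{\mathrm{Lip}})^2$ with no dimension factor, so either the intended bound is $\norm{{\pdv{\beta}{\theta}}}_{\mathrm{F}}^2\le n(L_{\beta,\theta}^{\mathrm{Lip}})^2$, with the factor $n$ absorbed by $18n^2/(b_{\mathrm{sp}}(n+2))\cdot n\le 18n^2/b_{\mathrm{sp}}$, or the paper is implicitly treating $m$ as a constant. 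I will use the bound $n^2/(n+2)\cdot n\le n^2$ and track the factor.

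Third, I will substitute $b_{\mathrm{sp}}=nb$ and $\delta^2=n^{1/3}b^{-1/3}$, so that $\delta^4=n^{2/3}b^{-2/3}$, and bound each of the four Jacobian terms by a multiple of $n^{2/3}b^{-2/3}$.

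\begin{itemize}
\item The noise term becomes $3\sigma_{\max}^2n^2/(n^{1/3}b^{-1/3}\cdot nb\, b_{\min})=3\sigma_{\max}^2 n^{2/3}b^{-2/3}/b_{\min}$.
\item The bias term is $3(L^{\mathrm H})^2 n^{2/3}b^{-2/3}$.
\item The third term is $(L^{\mathrm H})^2 n^{2/3}b^{-2/3}\cdot n/(6b)\le (L^{\mathrm H})^2 n^{2/3}b^{-2/3}/6$, using $b\ge n$.
\item The last term is at most $18(L^{\mathrm{Lip}})^2 n/b\le 18(L^{\mathrm{Lip}})^2 n^{2/3}b^{-2/3}$, again using $b\ge n$ (so $(n/b)^{1/3}\le1$).
\end{itemize}

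Together these give total error at most $C_1/b+C'_2(n/b)^{2/3}$. The choice \cref{eq:sphere_parameter_choice} then makes each of the three terms at most $\epsilon^2/3$. In particular, $b\ge n(3C'_2)^{1.5}/\epsilon^3$ gives $C'_2(n/b)^{2/3}\le\epsilon^2/3$.

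Finally, I will count samples. Each iteration uses $b+2b_{\mathrm{sp}}b_{\min}=b+2nb\,b_{\min}=\order{nb}$ samples. With $T=\order{\epsilon^{-2}}$ and $b=\order{n\epsilon^{-3}}$ (treating $b_{\min},\sigma_{\max}$, and the Lipschitz constants as fixed, and assuming $\epsilon$ is small enough that the $n\epsilon^{-3}$ term dominates the lower bounds $b_{\min}$ and $n$), the total is $\order{n^2\epsilon^{-5}}$.
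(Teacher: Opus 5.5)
Your proposal is correct and follows essentially the same route as the paper. You reuse the pointwise bound \cref{eq:bound_L1_L2_final}, plug in \cref{lem:estimated_Jacobian_error_sphere} with $\norm{\cdot}_{\mathrm{F}}^2\le n(L_{\beta,\theta}^{\mathrm{Lip}})^2$ (the extra factor $n$ absorbed via $n^3/(n+2)\le n^2$, exactly as the paper does), substitute $b_{\mathrm{sp}}=nb$, $\delta=n^{1/6}b^{-1/6}$ and $b\ge n$, and combine with \cref{prop:bound_gradient_norm}; your explicit check that \cref{prop:bound_gradient_norm} survives replacing the projection set by $\Theta_\delta^{\mathrm{sp}}$ is a detail the paper leaves implicit.
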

\begin{proof}
    By \cref{eq:bound_L1_L2_final}, we have
    \begin{align}
               & \mathbb{E}\left[
            \norm{
                \nabla\mathcal{L}(\theta_t)
                -
                \widehat{\nabla\mathcal{L}}(\theta_t)
            }^2
        \right] \notag                                                                                                                                                                                                                                                                                                                                                       \\
        \leq{} & 3\qty(\qty(L_{\ell,\theta}^{\mathrm{Lip}})^2 I_{\max} + \qty(L_{\beta,\theta}^{\mathrm{Lip}} L_{G,\beta}^{\mathrm{Lip}})^2) \mathbb{E}\left[\norm{\beta(\theta_t)-\hat\beta_t}^2\right] + 3 M_\ell I_{\max} \mathbb{E}\left[\norm{{\pdv{\beta}{\theta}}(\theta_t)-\widehat{\pdv{\beta}{\theta}}(\theta_t)}^2\right]. \label{eq:sphere_gradient_error_bound}
    \end{align}
    For the first term of \cref{eq:sphere_gradient_error_bound}, \cref{item:identifiability} and the definition of $C_1$ give
    \begin{equation}
        3\qty(\qty(L_{\ell,\theta}^{\mathrm{Lip}})^2 I_{\max} + \qty(L_{\beta,\theta}^{\mathrm{Lip}} L_{G,\beta}^{\mathrm{Lip}})^2) \mathbb{E}\left[\norm{\beta(\theta_t)-\hat\beta_t}^2\right] \le \frac{C_1}{b}.
        \label{eq:sphere_first_term_bound}
    \end{equation}
    For the second term of \cref{eq:sphere_gradient_error_bound}, applying \cref{lem:estimated_Jacobian_error_sphere} gives
    \begin{equation}
        \mathbb{E}\left[\norm{{\pdv{\beta}{\theta}}(\theta_t)-\widehat{\pdv{\beta}{\theta}}(\theta_t)}^2\right]
        \leq
        \frac{3\sigma_{\max}^2 n^2}{\delta^2 b_{\mathrm{sp}} b_{\min}}
        + 3(L_{\beta,\theta}^{\mathrm{H}})^2\delta^4
        + \frac{(L_{\beta,\theta}^{\mathrm{H}})^2\delta^4 n^2}{6b_{\mathrm{sp}}}
        + \frac{18 n^2}{b_{\mathrm{sp}}(n+2)}
        \norm{{\pdv{\beta}{\theta}}(\theta_t)}_{\mathrm{F}}^2.
        \label{eq:sphere_jacobian_error_bound_applied}
    \end{equation}
    We then bound the remaining Frobenius norm term.
    Since $\pdv{\beta}{\theta}$ is an $m \times n$ matrix, \cref{eq:Jacobian_norm_bounded_by_Lipschitz} gives
    \begin{equation}
        \norm{{\pdv{\beta}{\theta}}(\theta_t)}_{\mathrm{F}}^2 \leq \min(m,n) \norm{{\pdv{\beta}{\theta}}(\theta_t)}^2 \leq n (L_{\beta,\theta}^{\mathrm{Lip}})^2.
        \label{eq:Jacobian_Frobenius_norm_bound}
    \end{equation}
    Substituting \cref{eq:Jacobian_Frobenius_norm_bound} into \cref{eq:sphere_jacobian_error_bound_applied} yields
    \begin{equation}
        \mathbb{E}\left[\norm{{\pdv{\beta}{\theta}}(\theta_t)-\widehat{\pdv{\beta}{\theta}}(\theta_t)}^2\right]
        \leq
        \frac{3\sigma_{\max}^2 n^2}{\delta^2 b_{\mathrm{sp}} b_{\min}}
        + 3(L_{\beta,\theta}^{\mathrm{H}})^2\delta^4
        + \frac{(L_{\beta,\theta}^{\mathrm{H}})^2\delta^4 n^2}{6b_{\mathrm{sp}}}
        + \frac{18 (L_{\beta,\theta}^{\mathrm{Lip}})^2 n^3}{b_{\mathrm{sp}}(n+2)}.
        \label{eq:sphere_jacobian_error_bound_simplified}
    \end{equation}
    By setting $b_{\mathrm{sp}} = n b$ and $\delta = n^{1/6} b^{-1/6}$ and taking $b \geq n$, we can bound the second term as
    \begin{align}
                & 3M_\ell I_{\max} \mathbb{E}\left[\norm{{\pdv{\beta}{\theta}}(\theta_t)-\widehat{\pdv{\beta}{\theta}}(\theta_t)}^2\right]                                                                                                                                                  \notag                                                                                                                                                                                                      \\
        \leq {} & 3M_\ell I_{\max} \left( \frac{3\sigma_{\max}^2 n^2}{\delta^2 b_{\mathrm{sp}} b_{\min}} + 3(L_{\beta,\theta}^{\mathrm{H}})^2\delta^4 + \frac{(L_{\beta,\theta}^{\mathrm{H}})^2\delta^4 n^2}{6b_{\mathrm{sp}}} + \frac{18 (L_{\beta,\theta}^{\mathrm{Lip}})^2 n^3}{b_{\mathrm{sp}}(n+2)} \right)                                                                                               &  & (\text{by \cref{eq:sphere_jacobian_error_bound_simplified}})  \notag                \\
        \leq {} & 3M_\ell I_{\max} \left( \frac{3\sigma_{\max}^2 n^{2/3}}{b^{2/3} b_{\min}} + \frac{3(L_{\beta,\theta}^{\mathrm{H}})^2 n^{2/3}}{b^{2/3}} + \frac{(L_{\beta,\theta}^{\mathrm{H}})^2 n^{5/3}}{6b^{5/3}} + \frac{18 (L_{\beta,\theta}^{\mathrm{Lip}})^2 n}{b} \right)                                                                                                                             &  & (b_{\mathrm{sp}} = n b \text{ and }\delta = n^{1/6} b^{-1/6})    \notag \\
        = {}    & 3M_\ell I_{\max} \qty(\frac{3\sigma_{\max}^2}{b_{\min}} + 3(L_{\beta,\theta}^{\mathrm{H}})^2 + \frac{(L_{\beta,\theta}^{\mathrm{H}})^2}{6}\frac{n}{b} + 18 (L_{\beta,\theta}^{\mathrm{Lip}})^2 \qty(\frac{n}{b})^{1/3}) \frac{n^{2/3}}{b^{2/3}}                                                                                                                                       \notag                                                                                          \\
        \leq {} & 3M_\ell I_{\max} \qty(\frac{3\sigma_{\max}^2}{b_{\min}} + 3(L_{\beta,\theta}^{\mathrm{H}})^2 + \frac{(L_{\beta,\theta}^{\mathrm{H}})^2}{6} + 18 (L_{\beta,\theta}^{\mathrm{Lip}})^2) \frac{n^{2/3}}{b^{2/3}}                                                                                                                                                                                 &  & (n \leq b)                                                                          \\
        ={}     & \frac{n^{2/3} C'_2}{b^{2/3}}.
        \label{eq:sphere_second_term_bound}
    \end{align}
    Substituting \cref{eq:sphere_first_term_bound,eq:sphere_second_term_bound} into \cref{eq:sphere_gradient_error_bound} gives
    \begin{equation*}
        \mathbb{E}\left[
            \norm{
                \nabla\mathcal{L}(\theta_t)
                -
                \widehat{\nabla\mathcal{L}}(\theta_t)
            }^2
            \right] \leq \frac{C_1}{b} + \frac{n^{2/3} C'_2}{b^{2/3}}.
    \end{equation*}

    Thus, by the choices of $T$ and $b$ in \cref{eq:sphere_parameter_choice}, we can guarantee that
    \begin{equation*}
        \frac{1}{T} \sum_{t=0}^{T-1} \mathbb{E}\left[\norm{\mathcal{G}_\alpha(\theta_t)}^2\right] \leq \frac{2(\mathcal{L}(\theta_0) - \mathcal{L}_*)}{\alpha T} + \frac{C_1}{b} + \frac{n^{2/3} C'_2}{b^{2/3}} \leq \frac{\epsilon^2}{3} + \frac{\epsilon^2}{3} + \frac{\epsilon^2}{3} = \epsilon^2.
    \end{equation*}
    This is analogous to the bound in \cref{thm:main}.
    Since $b_{\mathrm{sp}} = n b$, the number of samples per iteration is
    \begin{equation*}
        b + 2 b_{\mathrm{sp}} b_{\min} = (2n b_{\min} + 1) b = \order{n^2 \epsilon^{-3}},
    \end{equation*}
    and hence the total sample complexity becomes
    \begin{equation*}
        T\qty(b+2b_{\mathrm{sp}} b_{\min})
        =
        \order{n^2 \epsilon^{-5}}.
    \end{equation*}
    This proves the formal version of \cref{prop:sphere_smoothing}.
\end{proof}
Compared with the coordinate-wise finite-difference estimator in \cref{thm:main}, the sphere-smoothed estimator improves the sample complexity from $\order{n^{2.5} \epsilon^{-5}}$ to $\order{n^2 \epsilon^{-5}}$.

\section{Additional Variant: Adaptive Stepsize Schedule}
\label{app:stepsize_schedule}

In the theoretical analysis, we employ a constant stepsize, which is theoretically preferable but may not be the most effective option in practice.
In numerical experiments, one may instead use adaptive optimizers such as Adam \citep{kingmaAdamMethodStochastic2017a} and AdamW \citep{loshchilovDecoupledWeightDecay2019}, which often exhibit better empirical performance.
Although we do not analyze Adam-type variants in this work, the error bound in \cref{prop:bound_L1_L2} may be useful for future convergence analyses.
\Cref{fig:experiments_result_adam} reports the corresponding Adam-style adaptive-stepsize results.
These results show that the proposed methods retain stable convergence under adaptive stepsizes across all experiments.

\afterpage{\begin{figure*}[t]
        \centering
        \begin{minipage}{0.75\textwidth}
            \begin{tabular}{@{}cc@{}}
                \includegraphics[width=0.48\linewidth]{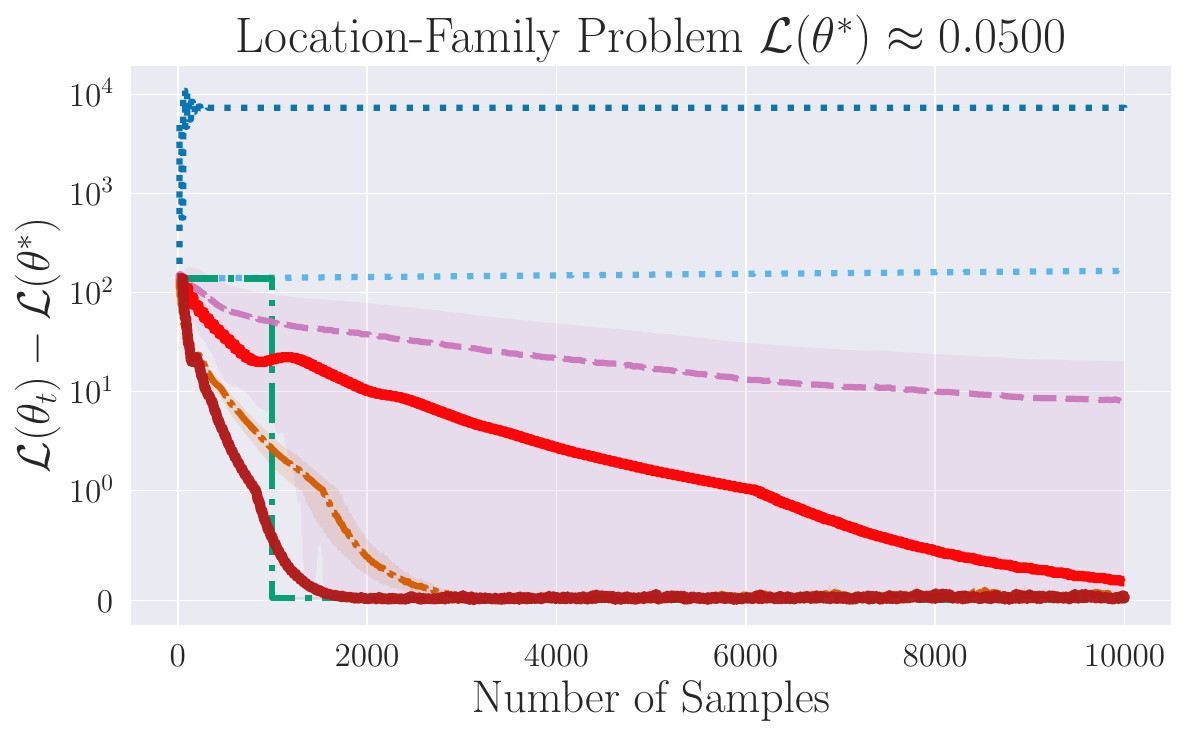}  &
                \includegraphics[width=0.48\linewidth]{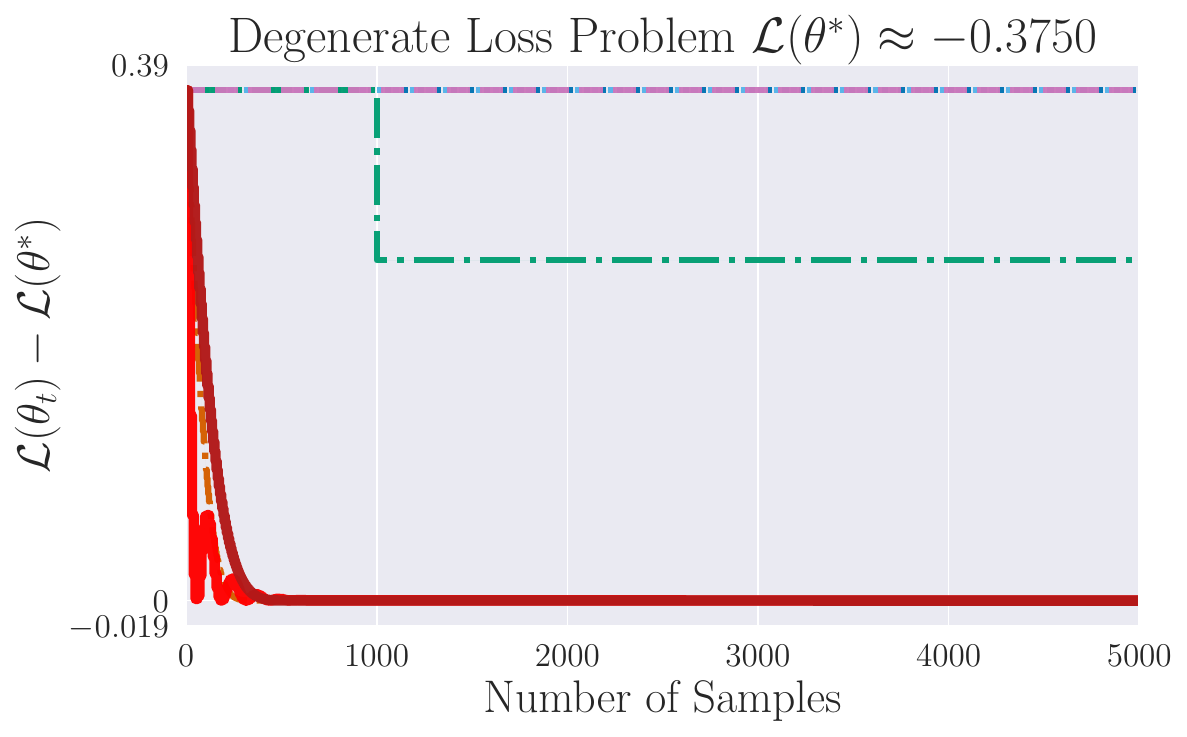} \\
                \includegraphics[width=0.48\linewidth]{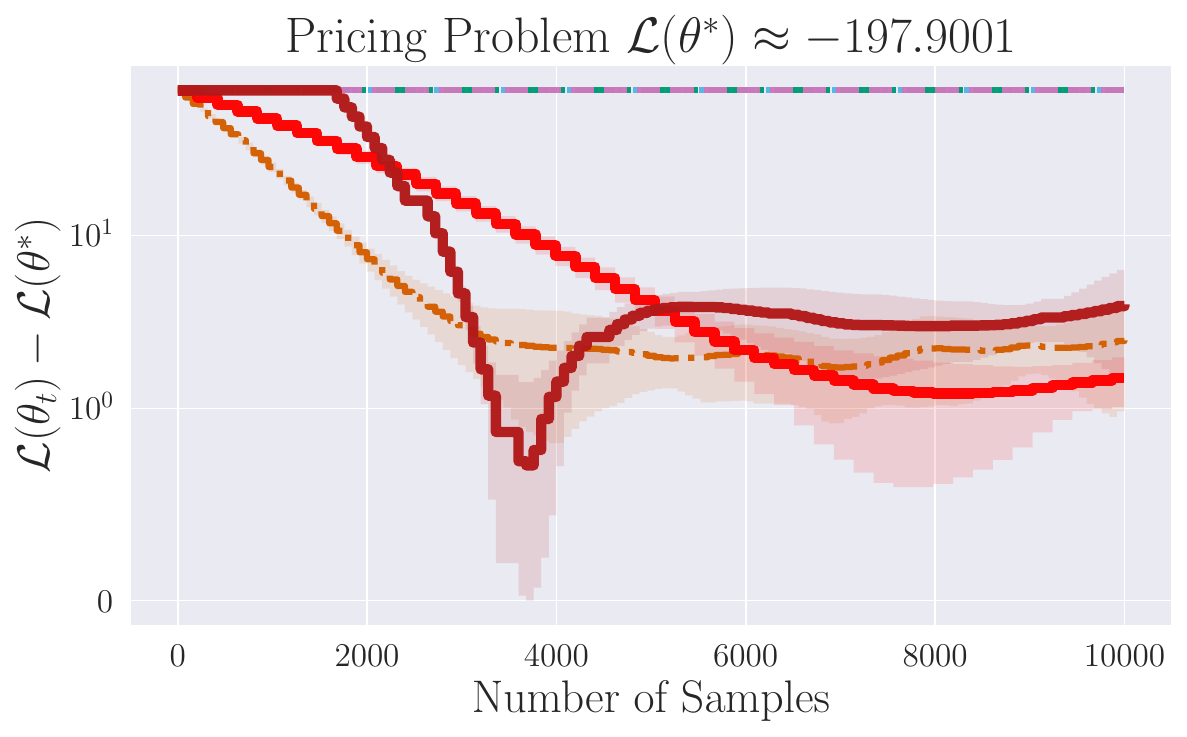} &
                \includegraphics[width=0.48\linewidth]{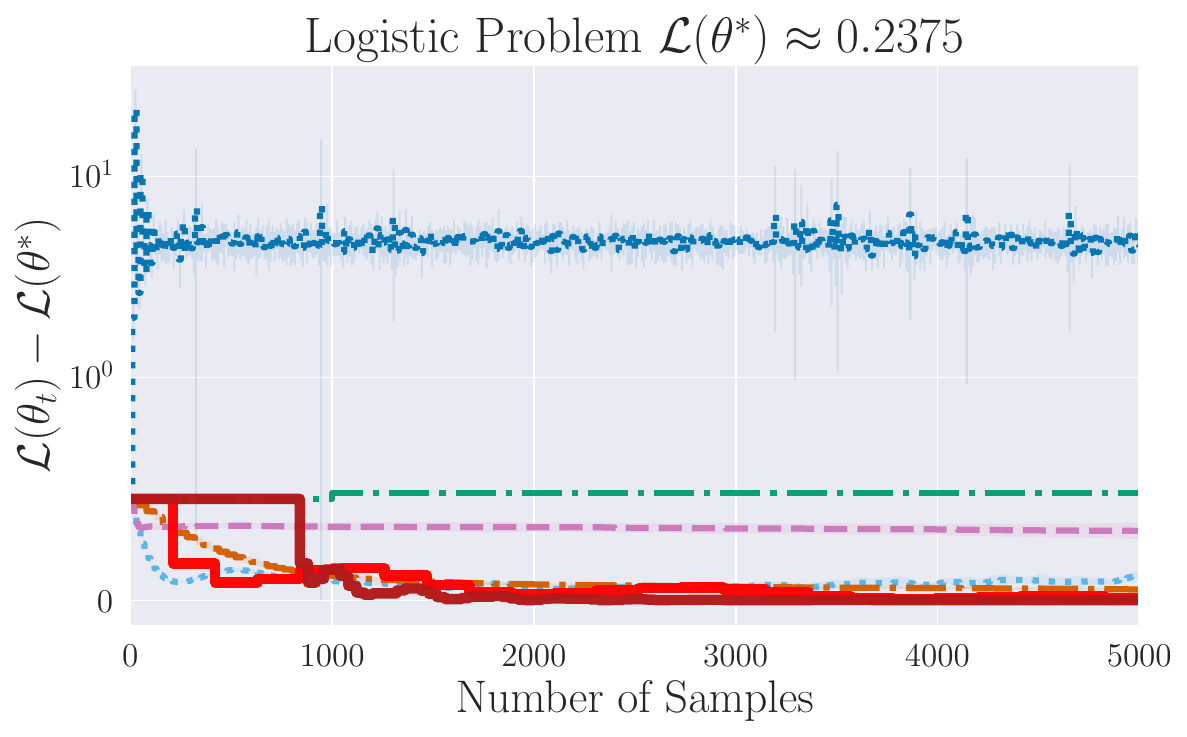}
            \end{tabular}
        \end{minipage}
        \hfill
        \begin{minipage}{0.2\textwidth}
            \includegraphics[width=\linewidth]{images/legend_pr.pdf}
        \end{minipage}
        \caption{Experimental results with Adam-style adaptive stepsizes.
            Solid curves show the mean excess performative risk, and shaded regions indicate one standard deviation.
            The vertical axis uses a symmetric logarithmic scale, which is linear for values between $-1$ and $1$ and logarithmic outside this range.
            The proposed methods retain stable convergence under adaptive stepsizes across all experiments.}
        \label{fig:experiments_result_adam}
    \end{figure*}}

\section{Experiment Details}

In this section, we provide details on the experimental setups and the problem setups.

\subsection{Experimental Setup}
\label{app:experimental_setup}

We first describe the parameter search used in the experiments.
We used a simple grid search to tune the algorithmic parameters.
For the stepsize parameter $\alpha$, we searched over
\begin{equation*}
    \alpha \in \{10^{-1},10^{-2},10^{-3}\}.
\end{equation*}
This parameter is the learning rate in Adam, and the remaining moment parameters were the PyTorch default values.
For the perturbation radius $\delta$, we searched over
\begin{equation*}
    \delta \in \{1,10^{-1},10^{-2}\}.
\end{equation*}
For the batch size $b$, we searched over
\begin{equation*}
    b \in \{d,2d,4d,8d\}.
\end{equation*}
For the update intervals of Proposed (cyclic), we searched over
\begin{equation*}
    K \in \{1,5,10\}.
\end{equation*}
Also, PerfGD used a history size $H=50$.
The plug-in method did not have tuned hyperparameters in these experiments because we used the same Gaussian surrogate specification in every setting.

Each hyperparameter configuration was evaluated using three random seeds with one-third of the maximum sample budget for each trial, and we selected the configuration with the lowest mean final performative risk across these trials.
After selection, we independently reran every trial to compute the reported mean and standard deviation.

The final hyperparameters used for each method and each experiment are listed in \cref{tab:hyperparameters_constant,tab:hyperparameters_adam}.
In the tables, $b$ denotes the batch size, $\alpha$ the stepsize or learning rate, $\delta$ the perturbation radius, $H$ the PerfGD history size, and $K$ the update interval for Proposed (cyclic).
\newcommand{\HyperparametersTableFont}{\scriptsize}
\newcommand{\HyperparametersCellFont}{\fontsize{5.5}{5.5}\selectfont}
\newcommand{\HyperparametersTabcolsep}{1pt}
\newcommand{\HyperparametersMethodColumnWidth}{0.08\textwidth}
\providecommand{\HyperparametersTableFont}{\small}
\providecommand{\HyperparametersCellFont}{\fontsize{8.2}{10.2}\selectfont}
\providecommand{\HyperparametersTabcolsep}{2pt}
\providecommand{\HyperparametersMethodColumnWidth}{0.125\textwidth}
\begin{table*}[t]
    \centering
    \caption{Final hyperparameters for experiments with constant.}
    \label{tab:hyperparameters_constant}
    {\HyperparametersTableFont
    \setlength{\tabcolsep}{\HyperparametersTabcolsep}
    \begin{tabularx}{\textwidth}{@{}p{\HyperparametersMethodColumnWidth} *{4}{>{\raggedright\arraybackslash\HyperparametersCellFont}X}@{}}
        \toprule
        Method & Location-family & Degenerate loss & Pricing & Logistic \\
        \midrule
        RRM & $b$ = 20 & $b$ = 2 & $b$ = 10 & $b$ = 10 \\
        RGD & $b$ = 40, $\alpha$ = 0.001 & $b$ = 2, $\alpha$ = 0.1 & $b$ = 10, $\alpha$ = 0.1 & $b$ = 40, $\alpha$ = 0.1 \\
        DFO & $b$ = 10, $\alpha$ = 0.001, $\delta$ = 1.0 & $b$ = 4, $\alpha$ = 0.01, $\delta$ = 1.0 & $b$ = 40, $\alpha$ = 0.01, $\delta$ = 1.0 & $b$ = 20, $\alpha$ = 0.1, $\delta$ = 1.0 \\
        PerfGD & $b$ = 5, $\alpha$ = 0.001, $H$ = 50 & $b$ = 2, $\alpha$ = 0.1, $H$ = 50 & $b$ = 10, $\alpha$ = 0.1, $H$ = 50 & $b$ = 10, $\alpha$ = 0.1, $H$ = 50 \\
        Proposed & $b$ = 5, $\alpha$ = 0.001, $\delta$ = 0.1 & $b$ = 2, $\alpha$ = 0.1, $\delta$ = 1.0 & $b$ = 20, $\alpha$ = 0.1, $\delta$ = 1.0 & $b$ = 10, $\alpha$ = 0.1, $\delta$ = 1.0 \\
        Proposed (cyclic) & $b$ = 5, $\alpha$ = 0.001, $\delta$ = 0.1, $K$ = 10 & $b$ = 2, $\alpha$ = 0.01, $\delta$ = 1.0, $K$ = 5 & $b$ = 80, $\alpha$ = 0.1, $\delta$ = 1.0, $K$ = 10 & $b$ = 10, $\alpha$ = 0.1, $\delta$ = 1.0, $K$ = 10 \\
        \bottomrule
    \end{tabularx}
    }
\end{table*}

\begin{table*}[t]
    \centering
    \caption{Final hyperparameters for experiments with adam.}
    \label{tab:hyperparameters_adam}
    {\HyperparametersTableFont
    \setlength{\tabcolsep}{\HyperparametersTabcolsep}
    \begin{tabularx}{\textwidth}{@{}p{\HyperparametersMethodColumnWidth} *{4}{>{\raggedright\arraybackslash\HyperparametersCellFont}X}@{}}
        \toprule
        Method & Location-family & Degenerate loss & Pricing & Logistic \\
        \midrule
        RRM & $b$ = 20 & $b$ = 2 & $b$ = 10 & $b$ = 10 \\
        RGD & $b$ = 40, $\alpha$ = 0.001 & $b$ = 2, $\alpha$ = 0.1 & $b$ = 10, $\alpha$ = 0.1 & $b$ = 10, $\alpha$ = 0.01 \\
        DFO & $b$ = 10, $\alpha$ = 0.1, $\delta$ = 0.1 & $b$ = 4, $\alpha$ = 0.01, $\delta$ = 1.0 & $b$ = 80, $\alpha$ = 0.1, $\delta$ = 1.0 & $b$ = 40, $\alpha$ = 0.01, $\delta$ = 1.0 \\
        PerfGD & $b$ = 10, $\alpha$ = 0.1, $H$ = 50 & $b$ = 2, $\alpha$ = 0.1, $H$ = 50 & $b$ = 10, $\alpha$ = 0.1, $H$ = 50 & $b$ = 10, $\alpha$ = 0.01, $H$ = 50 \\
        Proposed & $b$ = 5, $\alpha$ = 0.1, $\delta$ = 1.0 & $b$ = 2, $\alpha$ = 0.1, $\delta$ = 1.0 & $b$ = 10, $\alpha$ = 0.1, $\delta$ = 1.0 & $b$ = 10, $\alpha$ = 0.1, $\delta$ = 1.0 \\
        Proposed (cyclic) & $b$ = 5, $\alpha$ = 0.1, $\delta$ = 1.0, $K$ = 5 & $b$ = 2, $\alpha$ = 0.01, $\delta$ = 1.0, $K$ = 1 & $b$ = 80, $\alpha$ = 0.1, $\delta$ = 1.0, $K$ = 10 & $b$ = 40, $\alpha$ = 0.1, $\delta$ = 1.0, $K$ = 10 \\
        \bottomrule
    \end{tabularx}
    }
\end{table*}

All experiments were run on a Windows 11 Home 64-bit machine (version 10.0.26200, build 26200) with an Intel Core i7-1360P CPU (12 cores, 16 logical processors), \SI{16}{\giga\byte} of RAM.
The software environment used Python 3.11 and included NumPy 2.4.2 and PyTorch 2.10.0.

\subsection{Problem Setup}
\label{app:problem_setup}

We next describe the problem setups studied in the experiments.

\subsubsection{Location-Family Problem}

The first experiment studies a synthetic performative problem with a location-family distribution and squared loss.
In this setting, the data distribution follows a Gaussian distribution whose mean changes linearly in response to the deployed model parameter $\theta$, while the variance remains fixed.
This is a common toy problem in the performative prediction setting \citep{perdomoPerformativePrediction2020,izzoHowLearnWhen2021d}.

Let us state the problem formally.
For this problem, we set $d=n$.
We consider a model parameter $\theta\in\mathbb{R}^n$ and a data vector $z \in \mathbb{R}^{d}$ drawn from a Gaussian distribution, defined as
\begin{equation*}
    z \sim \mathcal{N}(\mu(\theta), \sigma^2 I_{d}), \qquad \mu(\theta) = M_0 + M_1 \theta,
\end{equation*}
where $M_0 \in \mathbb{R}^{d}$ is an intercept vector and $M_1 \in \mathbb{R}^{d \times n}$ is a linear operator.
The variance $\sigma^2 > 0$ is fixed and does not depend on $\theta$.
The loss function used is the squared loss
\begin{equation*}
    \ell(z;\theta) = \norm{z - \theta}^2.
\end{equation*}
The performative risk is therefore
\begin{align*}
    \mathcal{L}(\theta)
     & = \mathbb{E}_{z \sim \mathcal{N}(\mu(\theta), \sigma^2 I_{d})}[\norm{z - \theta}^2]                                                                                                      \\
     & = \mathbb{E}_{z \sim \mathcal{N}(\mu(\theta), \sigma^2 I_{d})}[\norm{z}^2] - 2\mu(\theta)^\top \theta + \norm{\theta}^2                                                                  \\
     & = d \sigma^2 + \norm{\mu(\theta) - \theta}^2                                                                            &  & (\mathbb{E}[z_i^2] = \mathrm{Var}[z_i] + \mathbb{E}[z_i]^2) \\
     & = d \sigma^2 + \norm{M_0 + M_1 \theta - \theta}^2.                                                                      &  & (\mu(\theta) = M_0 + M_1 \theta)
\end{align*}

In the reported experiments, we used $d=n=5$, $\sigma = 0.1$, and $\theta_0 = 0$.
The intercept vector $M_0$ and linear operator $M_1$ were randomly initialized from normal distributions with standard deviation $d$.
The constraint set was the Euclidean ball centered at the origin with radius $2\norm{\theta^*}$, where $\theta^*$ is the optimal model parameter.

\subsubsection{Degenerate Loss Problem}

The second experiment studies a two-dimensional example with a degenerate standard risk gradient, i.e., the first coordinate of $\nabla_1\mathcal{L}(\theta)$ vanishes identically.
For this problem, we set $d=n=2$.
Let $\theta=(x,y)\in[-R,R]^2$ and let $z\in\mathbb{R}^2$ follow
\begin{equation*}
    z \sim \mathcal{N}(\beta(\theta),\sigma^2 I_2),
    \qquad
    \beta(\theta) = \begin{pmatrix}
        x + a y + qx^2 \\
        0
    \end{pmatrix},
    \qquad
    q=\frac{1}{R}.
\end{equation*}
The loss function is
\begin{equation*}
    \ell(z;\theta)=z_1+\frac{\lambda}{2}y^2.
\end{equation*}
Therefore, the performative risk is
\begin{equation*}
    \mathcal{L}(\theta)=x+ay+qx^2+\frac{\lambda}{2}y^2.
\end{equation*}
By \cref{eq:nabla_L1,eq:nabla_L2}, the two terms in the gradient decomposition are
\begin{equation*}
    \nabla_1\mathcal{L}(\theta)   = \begin{pmatrix} 0 \\ \lambda y \end{pmatrix}, \qquad
    \nabla_2\mathcal{L}(\theta)   = \begin{pmatrix} 1+2qx \\ a \end{pmatrix}.
\end{equation*}
In the reported experiment, we used $R=1$, $a=0.5$, $\lambda=1.0$, and $\sigma^2=10^{-6}$.
The initial model parameter was $\theta_0=(0,0)$.
At this point, $\nabla_1\mathcal{L}(\theta_0)=(0,0)$, whereas $\nabla_2\mathcal{L}(\theta_0)=(1,a)\neq(0,0)$.

\subsubsection{Pricing Problem}

The third experiment studies a performative pricing problem where a seller sets prices $\theta \in \mathbb{R}^n$ to maximize expected revenue.
In this setting, the customers' demand (the quantity purchased) responds to the prices through a linear demand shift model.
The goal is to learn optimal prices while accounting for the demand response induced by the deployed prices.

Let us state the problem formally.
For this problem, we set $d=n$.
Let $\theta \in \mathbb{R}^n$ denote the price vector, and let $z \in \mathbb{R}^d$ denote the demand vector (quantity purchased at each price point).
The demand coordinates are assumed to be conditionally independent and to follow Poisson distributions that depend on the prices as
\begin{equation*}
    z_i \sim \operatorname{Poisson}(\lambda_i(\theta)), \qquad i=1,\ldots,d,
\end{equation*}
where the expected demand is
\begin{equation*}
    \lambda(\theta) = \mu_0 - \varepsilon \theta.
\end{equation*}
Here, $\mu_0 \in \mathbb{R}^d$ is the baseline demand and the scalar $\varepsilon>0$ is the common price sensitivity.

The goal is to maximize expected revenue, which is the inner product of prices and demand.
Equivalently, we minimize the negative revenue, so the loss function is
\begin{equation*}
    \ell(z; \theta) = -\theta^\top z,
\end{equation*}
which represents the negative revenue from selling at prices $\theta$ with demand $z$.
The prices are constrained to lie in a bounded box $\Theta = [0, R]^n$ where $R > 0$ is the maximum allowed price.

The performative risk is therefore
\begin{equation*}
    \mathcal{L}(\theta)
    = \mathbb{E}_{z \sim \mathcal{D}_{\lambda(\theta)}}[-\theta^\top z]
    = -\theta^\top \mathbb{E}[z]
    = -\theta^\top (\mu_0 - \varepsilon \theta),
\end{equation*}
where $\mathcal{D}_{\lambda(\theta)}$ denotes the product Poisson distribution and $\mathbb{E}[z] = \lambda(\theta)$.
This simplifies to
\begin{equation*}
    \mathcal{L}(\theta) = -\theta^\top \mu_0 + \varepsilon\norm{\theta}^2 = -\sum_{i=1}^n \theta_i \mu_{0,i} + \varepsilon\sum_{i=1}^n \theta_i^2.
\end{equation*}

The unconstrained minimizer can be found by solving the first-order condition:
\begin{equation*}
    {\pdv{\mathcal{L}}{\theta_i}}(\theta^{\mathrm{unc}}) = -\mu_{0,i} + 2\varepsilon \theta_i^{\mathrm{unc}} = 0,
\end{equation*}
which implies
\begin{equation*}
    \theta_i^{\mathrm{unc}} = \frac{\mu_{0,i}}{2\varepsilon}.
\end{equation*}
The constrained optimum is obtained by projecting this value onto $[0, R]$ coordinate-wise.

In the reported experiments, we used $d = n = 10$, $\varepsilon = 2.0$, and $\theta_0 = 5 \cdot \mathbf{1}_{10}$.
The baseline demand was sampled as
\begin{equation*}
    \mu_0 \sim \mathrm{Uniform}(12, 13),
\end{equation*}
sampled independently for each coordinate.
The feasible price set was
\begin{equation*}
    \Theta = [0, 5]^{10},
\end{equation*}
meaning prices are constrained between 0 and 5.

\subsubsection{Binary Classification with Logistic Loss} 

The fourth experiment studies a synthetic performative binary classification problem with logistic loss.
The setting is motivated by spam classification: each observation has a binary label, where the positive class represents spam and the negative class represents non-spam \citep{izzoHowLearnWhen2021d}.
The key performative feature is that the positive class distribution changes in response to the deployed classifier since spammers can adapt their behavior to evade detection, while the negative class distribution remains fixed since non-spammers do not have an incentive to change their behavior.
The Gaussian feature model is also consistent with prior empirical observations in NLP \citep{izzoHowLearnWhen2021d}.
Indeed, strong performance on a range of NLP tasks can be obtained by transforming standard BERT embeddings so that their empirical distribution resembles an isotropic Gaussian sample \citep{liSentenceEmbeddingsPretrained2020a}.

Let us state the problem formally.
For this problem, we set $d=n$.
Write the model parameter as $\theta = [\theta^{(0)}, \theta^{(1:n-1)}] \in \mathbb{R}^n$, where $\theta^{(0)}$ is the intercept and $\theta^{(1:n-1)}\in\mathbb{R}^{n-1}$ are the feature weights.
Each observation is $z=(x,y)$, where $x\in\mathbb{R}^{d-1}$ is the feature vector and $y\in\{0,1\}$ is the binary label.
In the spam interpretation, $y=1$ means that the email is spam, and $y=0$ means that the email is non-spam.
Our goal is to learn a linear classifier that predicts $y$ from $x$, and the performative aspect is that the positive-class distribution changes in response to the deployed classifier, while the negative-class distribution remains fixed.
We use the ridge-regularized cross-entropy loss
\begin{equation*}
    \ell(z;\theta)
    =
    -y\ln h\qty(\theta^{(0)} + \theta^{(1:n-1)\top} x)
    -(1-y)\ln(1-h\qty(\theta^{(0)} + \theta^{(1:n-1)\top} x))
    +\frac{\lambda}{2}\norm{\theta^{(1:n-1)}}^2
\end{equation*}
where $h(\cdot)$ is the logistic function defined as $h(x) = 1/(1+\exp(-x))$.
We assume that the label $y$ satisfies
\begin{equation*}
    y \sim \mathrm{Bernoulli}(\gamma),
\end{equation*}
where $0 < \gamma < 1$ is the prior probability of the positive class.
Conditional on the label, the feature vector is Gaussian with parameters $\mu_0 \in \mathbb{R}^{d-1}$, $\sigma_0 > 0$, and $\sigma_1 > 0$ as follows:
\begin{equation*}
    \begin{dcases}
        x \sim \mathcal{N}(\mu_0,\sigma_0^2 I_{d-1})         & \text{if } y=0, \\
        x \sim \mathcal{N}(\beta(\theta),\sigma_1^2 I_{d-1}) & \text{if } y=1.
    \end{dcases}
\end{equation*}
The distribution parameter mapping $\beta(\theta)$ is defined as
\begin{equation*}
    \beta(\theta) = \mu_1-\varepsilon\odot\theta^{(1:n-1)},
\end{equation*}
where $\odot$ denotes the Hadamard product and $\mu_1, \varepsilon \in \mathbb{R}^{d-1}$ are parameters.
This means that only the positive-class distribution changes in response to the deployed classifier, which models the spammers' incentive to adapt their behavior to evade detection.
The performative risk is therefore
\begin{align*}
    \mathcal{L}(\theta)
     & = \mathbb{E}_{z \sim \mathcal{D}_{\beta(\theta)}}[\ell(z;\theta)]                                                           \\
     & =
    \gamma
    \mathbb{E}_{x\sim\mathcal{N}(\beta(\theta),\sigma_1^2 I_{d-1})} \left[-\ln h\qty(\theta^{(0)} + \theta^{(1:n-1)\top} x)\right] \\
     & \quad +
    (1-\gamma)
    \mathbb{E}_{x\sim\mathcal{N}(\mu_0,\sigma_0^2 I_{d-1})} \left[-\ln(1-h\qty(\theta^{(0)} + \theta^{(1:n-1)\top} x))\right]      \\
     & \quad +
    \frac{\lambda}{2}\norm{\theta^{(1:n-1)}}^2.
\end{align*}
For a fixed $\theta$, we can approximate these Gaussian expectations deterministically using Gauss--Hermite quadrature \citep[p.890]{abramowitz1965handbook}.

In the reported experiment, we used $d = n = 10$.
This corresponds to 9 feature coordinates and one intercept coordinate.
The distributional parameters were
\begin{gather*}
    \mu_{0,i}\sim\mathrm{Uniform}(0.5,1.5),\qquad
    \mu_{1,i}\sim\mathrm{Uniform}(-1.5,-0.5),\\
    \sigma_0=\sigma_1=0.5,\qquad
    \varepsilon_i\sim\mathrm{Uniform}(2.5,3.5),\qquad
    \gamma=0.5.
\end{gather*}
The random vectors $\mu_0,\mu_1,\varepsilon\in\mathbb{R}^{9}$ were sampled independently across coordinates.
The ridge coefficient was
\begin{equation*}
    \lambda=10^{-2},
\end{equation*}
and the feasible parameter set was
\begin{equation*}
    \Theta=[-10,10]^{10}.
\end{equation*}
The initial model parameter $\theta_0$ was the zero vector.

\subsection{Statistical Significance Tests}
\label{app:significance_tests}

We report two-sided Wilcoxon signed-rank tests for the four constant-stepsize experiments shown in \cref{fig:experiments_result}.
At the maximum sample budget, we compared the final excess performative risk of Proposed (cyclic) with that of each other method, treating the latter methods as baselines.
For each comparison, trials were paired using the same random seed and problem instance for the two methods.
The null hypothesis was that the paired final-risk differences were centered at zero.
The tests rejected the null hypothesis at the $5\%$ level in almost all comparisons and favored Proposed (cyclic).
For example, Proposed (cyclic) significantly outperformed PerfGD on the degenerate-loss problem ($p = 0.0020$), RRM on the logistic problem ($p = 0.0020$), and RGD on the pricing problem ($p = 0.0020$).
Among the significant comparisons, the baseline attained the lower median final excess performative risk in two cases visible in \cref{fig:experiments_result}: DFO on the degenerate-loss problem ($p = 0.0273$) and Plugin on the location-family problem ($p = 0.0020$).
The only nonsignificant case was Proposed (cyclic) versus DFO on the pricing problem ($p = 0.3750$), where Proposed (cyclic) still had the lower median final excess performative risk.

\end{document}